\newcommand{\bA}{{\mathbb A}}
\newcommand{\bC}{{\mathbb C}}
\newcommand{\bG}{{\mathbb G}}
\newcommand{\bL}{{\mathbb L}}
\newcommand{\bM}{{\mathbb M}}
\newcommand{\bN}{{\mathbb N}}
\newcommand{\bP}{{\mathbb P}}
\newcommand{\bQ}{{\mathbb Q}}
\newcommand{\bZ}{{\mathbb Z}}
\newcommand{\cA}{{\mathcal A}}
\newcommand{\cC}{{\mathcal C}}
\newcommand{\cE}{{\mathcal E}}
\newcommand{\cF}{{\mathcal F}}
\newcommand{\cO}{{\mathcal O}}
\newcommand{\fD}{{\mathfrak D}}
\newcommand{\fX}{{\mathfrak X}}
\newcommand{\ofX}{\overline{\mathfrak{X}}}
\newcommand{\fm}{{\mathfrak m}}
\newcommand{\oK}{\overline{K}}
\newcommand{\ok}{\overline{k}}
\newcommand{\ox}{\overline{x}}
\newcommand{\oy}{\overline{y}}
\newcommand{\obQ}{\overline{\mathbb{Q}}}
\newcommand{\tf}{\tilde{f}}
\newcommand{\tx}{\widetilde{x}}
\newcommand{\et}{\mathrm{\acute{e}t}}
\DeclareMathOperator{\rk}{{rk}}
\DeclareMathOperator{\End}{{End}} 
\DeclareMathOperator{\Hom}{{Hom}} 
\DeclareMathOperator{\Ind}{{Ind}}
\DeclareMathOperator{\id}{{id}}
\DeclareMathOperator{\op}{{op}}
\DeclareMathOperator{\Spec}{{Spec}}
\DeclareMathOperator{\ev}{{ev}}
\DeclareMathOperator{\Func}{Func}
\DeclareMathOperator{\cont}{cont}
\DeclareMathOperator{\Proj}{Proj}
\DeclareMathOperator{\Gal}{{Gal}}
\newcommand{\alg}{\mathrm{alg}}
\newcommand{\Fr}{\operatorname{Fr}}
\newcommand{\fin}{\mathrm{fin}}
\newcommand{\red}{\mathrm{red}}
\newcommand{\sm}{\mathrm{sm}}
\newcommand{\oX}{\overline{X}}
\newcommand{\oC}{\overline{C}}
\newcommand{\oF}{\overline{F}}
\newcommand{\pro}{\mathrm{pro}}
\newcommand{\FEt}{\mathrm{F\acute{E}t}}
\newcommand{\PF}{\bP^1_F\setminus\{0,1,\infty\}}
\newcommand{\PoF}{\bP^1_{\oF}\setminus\{0,1,\infty\}}
\newcommand{\Apio}{\bQ_p[\pi_1^{\pro-\alg}(\PoF,0_v)]}
\newcommand{\algfun}[1]{\bQ_p[\pi_1^{\pro-\alg}(#1)]^{G_F-\fin}}
\DeclareSymbolFontAlphabet{\mathbb}{AMSb}
\DeclareSymbolFontAlphabet{\mathbbl}{bbold}
\newcommand{\Rmnum}[1]{\expandafter\@slowromancap\romannumeral #1@}
\newtheorem{pr}{Proposition}[section]
\newtheorem{thm}[pr]{Theorem}
\newtheorem{conj}[pr]{Conjecture}
\newtheorem{lm}[pr]{Lemma}
\newtheorem{cor}[pr]{Corollary}
\theoremstyle{definition}
\newtheorem{rem}[pr]{Remark}
\theoremstyle{definition}
\newtheorem{example}[pr]{Example}
\newtheorem{question}[pr]{Question}
\numberwithin{equation}{section}
\begin{document}

\title[Universality of the Galois action on $\pi_1$ of $\bP^1\setminus\{0,1,\infty\}$]
{Universality of the Galois action on the fundamental group of $\bP^1\setminus\{0,1,\infty\}$}

\author[]{Alexander Petrov}
\address{Institute for Advanced Study, USA}
\email{alexander.petrov.57@gmail.com}

\begin{abstract}
We prove that any semi-simple representation of the Galois group of a number field coming from geometry appears as a subquotient of the ring of regular functions on the pro-algebraic completion of the fundamental group of the projective line with $3$ punctures.
\end{abstract}

\maketitle

\section{Introduction}

A surprising result of Belyi's \cite{belyi} says that every non-unit element of the absolute Galois group $G_{\bQ}$ acts non-trivially on the \'etale fundamental group $\pi_1^{\et}(\bP^1_{\obQ}\setminus\{0,1,\infty\})$ of the projective line with $3$ punctures. It can be deduced from this that every finite image representation of the Galois group can be found in the space of locally constant functions on that fundamental group:

\begin{pr}[Proposition \ref{artin motives}]\label{intro belyi}
For a number field $F$, any continuous finite image representation $\rho:G_F\to GL_d(\bQ)$ can be embedded into the space of locally constant functions $\Func^{\mathrm{loc.}\mathrm{const.}}(\pi_1^{\et}(\PoF,0_v),\bQ)$. Here $0_v$ is a tangential base point supported at $0$.
\end{pr}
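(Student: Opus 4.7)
My plan is to split the argument into a representation-theoretic reduction and a geometric realization using Belyi's theorem.

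\textbf{Reduction to a permutation representation.} Since $\rho$ is continuous with finite image, it factors through $\Gal(L/F)$ for some finite Galois extension $L/F$, and is trivial on $G_L:=\Gal(\oF/L)$. By Frobenius reciprocity, for any open subgroup $H\subseteq G_L$ the representation $\rho$ satisfies $\rho^H=\rho\neq 0$, so $\rho$ embeds into the permutation representation $\bQ[G_F/H]=\Ind_H^{G_F}\bQ$. It will therefore suffice to realize such a $\bQ[G_F/H]$ (with $H\subseteq G_L$) as a $G_F$-subrepresentation of $\Func^{\mathrm{loc.}\mathrm{const.}}(\pi_1^{\et}(\PoF,0_v),\bQ)$; the fact that this filtered union contains any such finite-dimensional subrepresentation with unbounded multiplicity then handles any necessary multiplicities on the $\rho$-side.

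\textbf{Geometric realization via Belyi.} For this, I would start with a primitive element $\alpha$ of $L/F$, which gives a closed point of $\bP^1_F$ with residue field $L$, and then apply the strong form of Belyi's theorem to produce a rational function $\beta:\bP^1_F\to\bP^1_F$, defined over $F$, ramified only over $\{0,1,\infty\}$, with $\beta(\alpha)=0$. The map $\beta$ restricts to a finite \'etale cover $U=\bP^1_F\setminus\beta^{-1}(\{0,1,\infty\})\to\PF$, and passing to its geometrically connected Galois closure produces a finite Galois cover $\widetilde Y\to\PF$ whose base change $\widetilde Y_{\oF}\to\PoF$ corresponds to a $G_F$-invariant finite quotient $Q$ of $\pi_1^{\et}(\PoF,0_v)$. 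Because $\widetilde Y$ dominates $U$ and the fiber $U_{0_v}$ contains a point with residue field $L$ (coming from $\alpha$ above $0$), the $G_F$-set $\widetilde Y_{0_v}$ contains a $G_F$-orbit of the form $G_F/H$ with $H\subseteq G_L$. The tangential base point $0_v$ splits the homotopy exact sequence $1\to\pi_1^{\et}(\PoF,0_v)\to\pi_1^{\et}(\PF,0_v)\to G_F\to 1$, and this splitting canonically identifies $Q$ (with its outer $G_F$-action) with the geometric fiber $\widetilde Y_{0_v}$ as $G_F$-sets. Chaining the embeddings yields
\[\rho\hookrightarrow\bQ[G_F/H]\hookrightarrow\bQ[\widetilde Y_{0_v}]\simeq\bQ[Q]\subseteq\Func^{\mathrm{loc.}\mathrm{const.}}(\pi_1^{\et}(\PoF,0_v),\bQ).\]

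\textbf{Main obstacle.} The hardest piece will be the identification of $Q$, equipped with the outer Galois action inherited from $\pi_1^{\et}$, with the geometric fiber $\widetilde Y_{0_v}$, equipped with the action coming from the $F$-structure of $\widetilde Y$. This is standard for tangential base points by the functoriality of $\pi_1^{\et}$, but carefully matching the two $G_F$-actions --- not merely their underlying sets --- is where the real technical bookkeeping lies.
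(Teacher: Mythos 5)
Your route is genuinely different from the paper's: you realize $\rho$ inside a single permutation representation $\bQ[G_F/H]$ coming from a Belyi cover through a closed point with residue field $L$, whereas the paper first produces \emph{some} faithful representation of $\Gal(L/F)$ inside $\Apio$ (via a curve over $L$ that does not descend) and then reaches an arbitrary $\rho$ by taking tensor powers, which requires the telescopic property of $\pi_1^{\et}(\PoF,0_v)$ (Lemma \ref{tensor: telescopic subgroup}). Most of your geometric steps are sound: the fiber of $U\to\PF$ over $0_v$ above the point $\alpha$ has residue fields \emph{containing} $L$ (equal to $L$ only if $\beta$ is unramified at $\alpha$), which still gives stabilizers inside $G_L$; the normal core of the geometric subgroup is $G_F$-stable, so the Galois closure descends to $F$ and carries an $F$-rational point over $0_v$, which is exactly what makes the identification of $Q$ with $\widetilde Y_{0_v}$ as $G_F$-sets unproblematic. (Two small points to fix: Belyi only gives $\beta(\alpha)\in\{0,1,\infty\}$, so you must post-compose with an automorphism permuting $\{0,1,\infty\}$; and "residue field $L$" should read "residue field containing $L$".)

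The genuine gap is the multiplicity step. A nonzero map from each irreducible constituent $\rho_i$ into $\bQ[G_F/H]$ is injective, but $\rho_i$ occurs in $\bQ[G_F/H]$ with multiplicity at most $\dim\rho_i^{H}/\dim\End_{G_F}(\rho_i)$, so e.g. $\rho=\bQ^{\oplus 2}$ does not embed into any single $\bQ[G_F/H]$. Your claim that the filtered union $\Func^{\mathrm{loc.const.}}(\pi_1^{\et}(\PoF,0_v),\bQ)$ automatically "contains any such finite-dimensional subrepresentation with unbounded multiplicity" is precisely the non-trivial point: a filtered union of representations containing $V$ once need not contain $V^{\oplus 2}$, and in the paper this unbounded multiplicity is a \emph{theorem} (a consequence of Proposition \ref{tensor: tensor product}, whose proof needs the explicit degree-$3$ Belyi cover). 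Within your framework the gap is repairable without the tensor machinery: apply the strong form of Belyi's theorem (Theorem \ref{belyi main}) to a set $S=\{\alpha_1,\dots,\alpha_m\}$ of $m$ distinct closed points with residue field $L$, and compose with a further Belyi map sending all of $\{0,1,\infty\}$ to $0$, so that the fiber over $0_v$ contains $m$ pairwise disjoint orbits $G_F/H_j$ with $H_j\subseteq G_L$; then $\bigoplus_j\bQ[G_F/H_j]$ contains each $\rho_i$ with multiplicity $\geq m$, and $m=\dim\rho$ suffices. As written, however, this step is asserted rather than proved, and it is the one place where your argument currently does not close.
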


In this paper we generalize this result by proving that {\it every} semi-simple representation coming from geometry appears as a subquotient of the space of functions on the pro-algebraic completion of $\pi_1^{\et}(\PoF,0_v)$. Fix a prime $p$. Explicitly, the space of regular functions $\bQ_p[\pi_1^{\pro-\alg}(\PoF,0_v)]$ is the space of continuous functions $\pi_1^{\et}(\PoF,0_v)\to \bQ_p$ that can be factored as $\pi_1^{\et}(\PoF,0_v)\xrightarrow{\rho} GL_n(\bQ_p)\xrightarrow{f}\bQ_p$ where $\rho$ is a continuous representation and $f\in\bQ_p[GL_{n,\bQ_p}]$ is a regular function. Denote by $\bQ_p[\pi_1^{\pro-\alg}(\PoF,0_v)]^{G_F-\fin}\subset \bQ_p[\pi_1^{\pro-\alg}(\PoF,0_v)]$ the subspace of functions whose $G_F$-orbit spans a finite-dimensional space. This is our main result:

\begin{thm}\label{main theorem intro}
For any separated scheme $X$ of finite type over a number field $F$ and any $i\in \bN$, the semi-simplification of the $G_F$-representation $H^i(X_{\oF},\bQ_p)$ appears as a subquotient of the space $\bQ_p[\pi_1^{\pro-\alg}(\PoF,0_v)]^{G_F-\fin}$.
\end{thm}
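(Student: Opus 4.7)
The plan is a double induction: first reduce to $X$ smooth projective over $F$, then induct on $n=\dim X$. For the reduction, de~Jong's alteration theorem together with Deligne's weight filtration will express $H^i(X_{\oF},\bQ_p)^{\mathrm{ss}}$ as a $G_F$-subquotient of the cohomology of smooth projective varieties (possibly defined over finite extensions of $F$, which is harmless since the space $\bQ_p[\pi_1^{\pro-\alg}(\PoF,0_v)]^{G_F-\fin}$ is unchanged when replacing $G_F$ by a finite-index subgroup). The key geometric input for the inductive step is that every smooth projective $X$ admits a Lefschetz pencil, and after composing with a Belyi-type map $\bP^1\to\bP^1$ one may assume the resulting morphism $\pi:\widetilde X\to\bP^1$ is smooth away from $\{0,1,\infty\}$.

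For the base case: $\dim X=0$ is Proposition~\ref{intro belyi} applied to the permutation representation $H^0(X)$. For a smooth projective curve $Y$, pick a Belyi map $f:Y\to\bP^1$ ramified only over $\{0,1,\infty\}$, put $U_Y:=Y\setminus f^{-1}(\{0,1,\infty\})$, and observe that $U_Y\to\PoF$ is finite etale, so $\pi_1^{\et}(U_{Y,\oF})$ is a finite-index subgroup of $\pi_1^{\et}(\PoF,0_v)$ (after lifting the base point). Excision gives $H^1(Y_{\oF},\bQ_p)\hookrightarrow H^1(U_{Y,\oF},\bQ_p)=\Hom_{\cont}(\pi_1^{\et}(U_{Y,\oF}),\bQ_p)\subset \bQ_p[\pi_1^{\pro-\alg}(U_{Y,\oF})]$, and restriction of functions yields a $G_F$-equivariant surjection $\bQ_p[\pi_1^{\pro-\alg}(\PoF,0_v)]\twoheadrightarrow \bQ_p[\pi_1^{\pro-\alg}(U_{Y,\oF})]$; hence $H^1(Y)^{\mathrm{ss}}$ is a $G_F$-subquotient of the target.

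For the inductive step $n\geq 2$: blow up the base locus $B$ of a Lefschetz pencil of hyperplane sections of $X$ to get $\widetilde X\to\bP^1$ with finite discriminant $\Sigma\subset\bP^1$, then compose with a Belyi map pushing $\Sigma$ into $\{0,1,\infty\}$ to obtain $\pi:\widetilde X\to\bP^1$ smooth over $\PoF$. The blowup formula reduces $H^i(X)^{\mathrm{ss}}$ to $H^i(\widetilde X)^{\mathrm{ss}}\oplus H^{i-2}(B)(-1)^{\mathrm{ss}}$, with $H^*(B)^{\mathrm{ss}}$ handled by induction since $\dim B\leq n-2$. By the decomposition theorem, the local systems $V_b:=R^b(\pi|_{\pi^{-1}(\PoF)})_*\bQ_p$ are representations of the arithmetic fundamental group $\pi_1^{\et}(\bP^1_F\setminus\{0,1,\infty\},0_v)$, whose underlying $G_F$-representations $V_b^{\mathrm{ss}}=H^b(\text{fiber})^{\mathrm{ss}}$ are in the target by induction (fibers are smooth projective of dimension $n-1$). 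The Leray spectral sequence for $\pi|_{\pi^{-1}(\PoF)}$ then expresses $H^i(\pi^{-1}(\PoF),\bQ_p)^{\mathrm{ss}}$ in terms of $H^0(\PoF,V_i)^{\mathrm{ss}}$ and $H^1(\PoF,V_{i-1})^{\mathrm{ss}}$, while the difference between $H^*(\widetilde X_{\oF})$ and $H^*(\pi^{-1}(\PoF))$ is controlled by the cohomology of the singular fibers over $\{0,1,\infty\}$, again of dimension at most $n-1$ and thus handled by induction.

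The main obstacle is the following key lemma: whenever $V$ is a $\pi_1^{\et}(\bP^1_F\setminus\{0,1,\infty\},0_v)$-representation whose underlying $G_F$-representation $V^{\mathrm{ss}}$ is a subquotient of the target, both $H^0(\PoF,V)^{\mathrm{ss}}$ and $H^1(\PoF,V)^{\mathrm{ss}}$ are $G_F$-subquotients of the target. The $H^0$ case is immediate since $V^{\pi_1}\subset V$. For $H^1(\PoF,V)^{\mathrm{ss}}$ my approach is to attach to each class in $\Ext^1(\bQ_p,V)$ (in the category of arithmetic $\pi_1$-representations) its corresponding extension $E$, whose matrix coefficients contain a ``cocycle piece'' $\gamma\mapsto v^*(c(\gamma))$ spanning a $G_F$-subrepresentation of the target isomorphic to $V^*$. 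To realize all of $H^1(\PoF,V)^{\mathrm{ss}}$ (and not merely its $G_F$-invariant part), I would assemble these into a single universal extension $0\to V\to E^{\univ}\to \bQ_p\otimes H^1(\PoF,V)^*\to 0$ classified by the identity $\id\in H^1(\PoF,V)\otimes H^1(\PoF,V)^*$. Verifying that the matrix coefficients of $E^{\univ}$ really do capture all of $H^1(\PoF,V)^{\mathrm{ss}}$, and that the construction remains $G_F$-equivariant and lands inside the $G_F$-finite part of the target, will be the subtlest and most technical step of the argument.
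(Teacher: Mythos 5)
Your overall skeleton --- induction on dimension, fibering over $\bP^1$ via a Belyi map, Leray plus Artin vanishing, and reducing everything to $H^0$ and $H^1$ of local systems on $\PoF$ --- is exactly the paper's strategy, and your base case for curves matches Example \ref{intro: belyi example}. The reduction to smooth projective varieties via Lefschetz pencils and the blow-up formula is a workable variant of the paper's reduction (which instead uses $h$-hypercovers, alterations, and the Gysin sequence to pass freely to dense opens). But there is a genuine gap in the step you yourself flag as the subtlest one, and it is not merely technical.

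The matrix coefficients of your universal extension $0\to V\to E^{\univ}\to \bQ_p\otimes H^1(\PoF,V)^{\vee}\to 0$ do not capture $H^1(\PoF,V)$. Write the geometric monodromy of $E^{\univ}$ in block form: the off-diagonal block is a cocycle $c$ valued in $\Hom(H^1(\PoF,V)^{\vee},V)\simeq V\otimes H^1(\PoF,V)$, and the resulting ``cocycle piece'' of $\cF(E^{\univ})^{\vee}$ is a quotient of $V^{\vee}\otimes H^1(\PoF,V)^{\vee}$ as a $G_F$-representation --- the twist by $V^{\vee}$ (and the dual) is unavoidable, exactly as in your single-extension computation, where you correctly found $V^{\vee}$ rather than the extension class. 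So at best this shows that $H^1(\PoF,V)$ enters the target only after tensoring with the stalk; this is precisely the paper's Proposition \ref{H1 main}, which asserts that $H^1_{\et}(Y_{\oF},\bL)$ is a subquotient of $\bQ_p[\pi_1^{\pro-\alg}(Y_{\oF},y)]\otimes\bL_y$. To remove the twist one needs the class of representations appearing in the target to be closed under tensor products (and, on your dual route, under duality), and this is not formal: it is the one genuinely special property of $\bP^1\setminus\{0,1,\infty\}$ that the paper uses, namely that an open subgroup of $\pi_1^{\et}(\PoF,0_v)$ surjects $G_F$-equivariantly onto $\pi_1^{\et}(\PoF,0_{v_1})\times\pi_1^{\et}(\PoF,0_{v_2})$, realized by the degree-$3$ self-cover $z\mapsto \tfrac{27}{4}z(z-1)^2$ (Lemma \ref{tensor: telescopic subgroup}, Proposition \ref{tensor: tensor product}). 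This ingredient is entirely absent from your proposal, and without it the induction does not close; note also that it forces one to work with all tangential base points at $0$ simultaneously (the class $\cC_F$), since the self-cover changes the base point. The same tensor-product closure is what justifies your ``harmless'' passage to finite extensions of $F$ (Corollary \ref{artin: finite extension}), which is not a matter of the target being ``unchanged'' under restriction to a finite-index subgroup.
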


Conversely, it was shown in \cite[Corollary 8.6]{petrov} that for any smooth variety $Y$ over $F$ with an $F$-rational base point $y$, any finite-dimensional subrepresentation $V$ of $\bQ_p[\pi_1^{\pro-\alg}(Y_{\oF},\oy)]$ is de Rham at places above $p$ and is almost everywhere unramified. Therefore, the Fontaine-Mazur conjecture \cite{fontaine-mazur} is equivalent to the conjunction of the following two conjectures, see Lemma \ref{fm restatement}:

\begin{conj}\label{conj geom in pi1}
Every irreducible finite-dimensional representation of $G_F$ that appears as a subquotient of $\obQ_p[\pi_1^{\pro-\alg}(\PoF,0_v)]^{G_F-\fin}$ is a subquotient of $H^i_{\et}(X_{\oF},\obQ_p(j))$ for some smooth projective variety $X$ and $i\geq 0,j\in\bZ$.
\end{conj}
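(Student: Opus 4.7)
As noted in the paragraph preceding the statement, Conjecture \ref{conj geom in pi1} combined with Theorem \ref{main theorem intro} is equivalent to the Fontaine--Mazur conjecture, so any honest sketch must be frankly speculative. I describe a geometric strategy that at least attacks the rigid part of the problem.

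First, I would reformulate the question in terms of lisse sheaves. A finite-dimensional $G_F$-stable subspace $W$ of $\obQ_p[\pi_1^{\pro-\alg}(\PoF,0_v)]$ is a space of matrix coefficients for a continuous representation $\rho$ of the arithmetic fundamental group $\pi_1^{\et}(\PF,0_v)$, equivalently the stalk at $0_v$ of a lisse $\obQ_p$-sheaf $\mathcal{F}_\rho$ on $\PF$. By the Corollary 8.6 of \cite{petrov} cited in the text, every irreducible $G_F$-subquotient of $W$ is automatically de Rham at all places above $p$ and unramified outside a finite set, so it already satisfies the hypotheses of Fontaine--Mazur; the remaining content is the geometric realization of such representations as subquotients of $H^i_{\et}(X_{\oF},\obQ_p(j))$.

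Next, I would split cases according to the cohomological rigidity of $\mathcal{F}_\rho$. When $\mathcal{F}_\rho$ is rigid in the sense of Katz, it is obtained from the trivial and Kummer sheaves on $\PF$ by iterated middle convolution and middle tensor product. Each of these operations is realized by a Gauss--Manin-type construction, so up to semisimplification $\mathcal{F}_\rho$ appears as a subquotient of $R^if_*\obQ_p(j)$ for an explicit smooth projective family $f: X \to \PF$. Specializing at the tangential base point $0_v$ then realizes the irreducible constituents of $W$ as subquotients of $H^i_{\et}(X_{0_v,\oF},\obQ_p(j))$, as desired.

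The non-rigid case is the principal obstacle. Here $\mathcal{F}_\rho$ deforms in a positive-dimensional moduli space $M$ of local systems on $\PoF$ with prescribed local monodromy, and one would have to exhibit the universal object on $M \times \PF$ motivically, then propagate this to a motivic origin of a single stalk compatible with the Galois action. Even over $\bC$ this lies at the depth of the geometric Langlands correspondence, and arithmetically one appears to need automorphy lifting theorems of Fontaine--Mazur strength for large reductive groups to conclude that the irreducible $G_F$-subquotients of $W$ are motivic. Without such an automorphy input, I do not see how to close the argument, and I expect this is why the statement is left as a conjecture.
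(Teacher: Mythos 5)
This statement is a \emph{conjecture} in the paper, not a theorem: together with Conjecture \ref{conj de rham in pi1} it is shown (Lemma \ref{fm restatement}) to be equivalent to the Fontaine--Mazur conjecture, and the paper offers no proof of it. So there is no argument of the author's to compare yours against, and the only correct "answer" here is the one you in effect give: an acknowledgement that the statement is open, plus a survey of what a proof would require. On that level your write-up is sound --- the reduction to lisse sheaves on $\PF$ via matrix coefficients matches how the paper thinks about these spaces (Lemma \ref{pro-alg: functions as matrix coeffs}), and you correctly identify the non-rigid case as the fundamental obstruction, where some automorphy input of Fontaine--Mazur strength seems unavoidable.

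Two cautions about the part you present as if it were solid, the rigid case. First, Katz's classification applies to \emph{irreducible, cohomologically rigid} local systems; an arbitrary lisse sheaf $\mathcal{F}_\rho$ whose matrix coefficients contain a given $W$ need not be irreducible or rigid, so the case division is not along a clean dichotomy on $W$ itself. Second, and more seriously, "specializing at the tangential base point $0_v$" does not produce a smooth projective variety $X_{0_v}$: the fiber of a family $f:X\to\PF$ over $0_v$ is a degenerating/nearby-cycles object, and the Galois representation on $\bL_{0_v}$ (hence on $\cF(\bL)^{\vee}\subset\bL_{0_v}\otimes\bL_{0_v}^{\vee}$) genuinely differs from the stalk at a classical point --- this is exactly the phenomenon the paper exploits in Lemma \ref{tensor: any character} and Corollary \ref{weil numbers over number field}, where the Kummer extension class of $v/16$ appears in $\bL_{0_v}$. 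So even granting Katz, you would still need a weight-monodromy/limit argument to show the semisimplification of the nearby-cycles stalk is cut out of the cohomology of smooth projective varieties. Flagging these would make your sketch an accurate picture of the state of the art; as written, the rigid case is overstated.
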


\begin{conj}\label{conj de rham in pi1}
Any irreducible $\obQ_p$-representation of $G_F$ that is almost everywhere unramified and is de Rham at places above $p$ can be established as a subquotient of $\obQ_p[\pi_1^{\pro-\alg}(\PoF,0_v)]^{G_F-\fin}$ for every tangential base point $0_v$ supported at $0$.
\end{conj}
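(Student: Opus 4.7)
\emph{Strategy and reduction.} The plan is to combine Theorem \ref{main theorem intro} with the Fontaine--Mazur conjecture \cite{fontaine-mazur} and to verify a closure property for subquotients of $\obQ_p[\pi_1^{\pro-\alg}(\PoF,0_v)]^{G_F-\fin}$. Let $V$ be an irreducible $\obQ_p$-representation of $G_F$ that is de Rham at all places above $p$ and unramified at almost all finite places. Fontaine--Mazur furnishes a smooth projective $F$-variety $X$ and integers $i\geq 0$, $j\in\bZ$ such that $V(-j)$ is a subquotient of $H^i_{\et}(X_{\oF},\obQ_p)$. Theorem \ref{main theorem intro}, applied to $X$ and extended from $\bQ_p$ to $\obQ_p$, exhibits the semisimplification of $H^i_{\et}(X_{\oF},\obQ_p)$ as a subquotient of $\obQ_p[\pi_1^{\pro-\alg}(\PoF,0_v)]^{G_F-\fin}$. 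Since $V(-j)$ is irreducible it is a direct summand of that semisimplification, and hence itself a subquotient of $\obQ_p[\pi_1^{\pro-\alg}(\PoF,0_v)]^{G_F-\fin}$.

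\emph{Removing the Tate twist.} To promote $V(-j)$ to $V$, I would verify that the class $\cC$ of $G_F$-subquotients of $\obQ_p[\pi_1^{\pro-\alg}(\PoF,0_v)]^{G_F-\fin}$ is closed under tensor products and duals. The Hopf algebra structure on the coordinate ring of $\Pi=\pi_1^{\pro-\alg}(\PoF,0_v)$ is $G_F$-equivariant because the tangential base point $0_v$ is Galois-invariant; accordingly, for any $G_F$-equivariant $\Pi$-representation $U$ the matrix-coefficient subspace $C(U)\subset\obQ_p[\Pi]$ is isomorphic to $U^*\otimes U$ as a $G_F$-module, the multiplication map induces an isomorphism $C(U_1)\otimes C(U_2)\iso C(U_1\otimes U_2)$, and the antipode gives $C(U)^*\cong C(U^*)$, which together produce the desired closure. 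The character $\obQ_p(-1)$ lies in $\cC$ because the tame inertia at the puncture $0$ contributes a copy of $\hat{\bZ}(1)$ to $\pi_1^{\et}(\PoF,0_v)^{\ab}$, whose pro-unipotent completion over $\obQ_p$ has coordinate ring $\Sym^*(\obQ_p(-1))$; dualizing in $\cC$ then gives $\obQ_p(j)\in\cC$ for every $j\in\bZ$. Tensoring $V(-j)\in\cC$ with $\obQ_p(j)\in\cC$ recovers $V\in\cC$, as required.

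\emph{Main obstacle.} The only deep input is the Fontaine--Mazur conjecture itself: as the paper already notes, Conjectures \ref{conj geom in pi1} and \ref{conj de rham in pi1} jointly reconstruct Fontaine--Mazur from Theorem \ref{main theorem intro}, so an unconditional proof of Conjecture \ref{conj de rham in pi1} would in particular settle Fontaine--Mazur and is therefore out of reach of current methods. A conceivable direct route would construct matrix coefficients on $\pi_1^{\pro-\alg}(\PoF,0_v)$ straight from the $p$-adic Hodge-theoretic data of an abstract de Rham $V$ --- for instance via $p$-adic periods adapted to this specific pro-algebraic fundamental group --- but no such mechanism is presently available.
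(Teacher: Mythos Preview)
The statement is a conjecture; the paper does not prove it unconditionally but shows in Lemma~\ref{fm restatement} that it follows from the Fontaine--Mazur conjecture. Your high-level conditional strategy --- assume Fontaine--Mazur, invoke Theorem~\ref{main theorem intro}, and then undo the Tate twist --- is exactly the route the paper takes there. The differences, and the problems, are in how you execute the ``Removing the Tate twist'' step.

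\textbf{Tensor products.} Your claim that multiplication induces an isomorphism $C(U_1)\otimes C(U_2)\iso C(U_1\otimes U_2)$ is not correct in general: multiplication in $\obQ_p[\Pi]$ gives only a surjection onto $C(U_1\otimes U_2)$, and the map $C(U_1)\otimes C(U_2)\to \obQ_p[\Pi]$ is typically not injective (already for $\Pi$ a finite group and $U_1=U_2$ the regular representation it has a huge kernel). Consequently the ring structure alone does not show that the class of $G_F$-subquotients of $\obQ_p[\Pi]^{G_F-\fin}$ is closed under tensor product. The paper proves closure by an entirely different, geometric mechanism (Proposition~\ref{tensor: tensor product} via Lemma~\ref{tensor: telescopic subgroup}): one exhibits an explicit finite \'etale cover of $\PF$ whose fundamental group surjects $G_F$-equivariantly onto a product $\pi_1^{\et}(\PoF,0_{v_1})\times\pi_1^{\et}(\PoF,0_{v_2})$, and it is this surjection that embeds the tensor product of function rings into $\Apio$.

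\textbf{Duals and $\bQ_p(1)$.} The antipode $S$ is indeed $G_F$-equivariant, but what it gives is $S(C(U))=C(U^{\vee})$ as \emph{subspaces} of $\obQ_p[\Pi]$, hence a $G_F$-equivariant isomorphism $C(U)\simeq C(U^{\vee})$ --- not $C(U)^{\vee}\simeq C(U^{\vee})$. So the antipode yields nothing new about which $G_F$-representations occur, and in particular does not produce $\obQ_p(1)$ from $\obQ_p(-1)$. The paper obtains $\bQ_p(1)$ by a genuinely different argument (Lemma~\ref{tensor: any character}): one computes the stalk at the tangential base point $0_v$ of $R^1f_*\bQ_p$ for the Legendre family, finds a nonsplit Kummer extension, and reads off $\bQ_p(1)$ inside $\bL_{0_v}\otimes\bL_{0_v}^{\vee}\subset\Apio$. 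This step is specific to tangential base points; as the paper notes after Proposition~\ref{tensor: duality} and in Corollary~\ref{weil numbers over number field}, duality fails for classical base points, so no formal Hopf-algebra argument can possibly work here.

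\textbf{A minor point.} An unconditional proof of Conjecture~\ref{conj de rham in pi1} would not by itself settle Fontaine--Mazur: by Lemma~\ref{fm restatement} one also needs Conjecture~\ref{conj geom in pi1}, which is likewise open.
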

We will observe in Corollary \ref{weil numbers over number field}, extending a result of Pridham \cite{pridhamweight}, that for every Galois representation appearing in $\obQ_p[\pi_1^{\pro-\alg}(\PoF,0_v)]^{G_F-\fin}$ the Frobenius eigenvalues at almost all places are Weil numbers, a condition notably absent from the statement of the Fontaine-Mazur conjecture.

Before sketching the proof of the theorem, let us get a feel for working with the Galois action on the pro-algebraic completion of the \'etale fundamental group by looking at two mechanisms for producing Galois representations inside $\bQ_p[\pi_1^{\pro-\alg}(Y_{\oF},y)]$, for a variety $Y$ over $F$. In Example \ref{intro: irreducible monodromy example} geometrically irreducible local systems yield Galois representations inside functions on the fundamental group, and Example \ref{intro: belyi example} demonstrates how Belyi's theorem implies Theorem \ref{main theorem intro} when $X$ is a curve.

\begin{example}\label{intro: irreducible monodromy example} If $\bL$ is a $\bQ_p$-local system then the corresponding representation of the geometric fundamental group defines a morphism $\rho^{\mathrm{geom}}_{\bL}:\pi_1^{\pro-\alg}(Y_{\oF},y)\to GL_{\bL_{y}}$ to the algebraic group of invertible matrices on the space $\bL_y$. Regular functions on $GL_{\bL_y}$ then give rise to elements of $\bQ_p[\pi_1^{\pro-\alg}(Y_{\oF},y)]$. In particular, there is a $G_F$-equivariant map $\End(\bL_y)\to \bQ_p[\pi_1^{\pro-\alg}(Y_{\oF},y)]$ whose image consists of {\it matrix coefficients} of the representation $\rho_{\bL}^{\mathrm{geom}}$; it is the space dual to the $\bQ_p$-span of the image of the map $\pi_1^{\et}(Y_{\oF},y)\to \End(\bL_y)$. For example, if $\bL|_{Y_{\oF}}$ is absolutely irreducible, the image of $\rho_{\bL}^{\mathrm{geom}}$ spans all of $\End(\bL_y)$ by Burnside's theorem and thus the map $\End(\bL_y)\to \bQ_p[\pi_1^{\pro-\alg}(Y_{\oF},y)]$ is an inclusion. Thus, if a Galois representation $V$ can be established as the fiber over $y$ of a geometrically absolutely irreducible local system on $Y$, then the adjoint representation $V\otimes V^{\vee}$ of the Galois group $G_F$ is a subspace of the ring of regular function on the pro-algebraic completion of $\pi_1^{\et}(Y_{\oF},y)$.
\end{example}
\begin{example}\label{intro: belyi example} Suppose that $C$ is a smooth projective curve equipped with a finite morphism $f:C\to \bP^1_F$ that is \'etale over $\PF$. Belyi's theorem \cite[Theorem 4]{belyi} says that for any curve over $F$ one can choose such a morphism. Assume further that $C$ contains a rational point $x\in C(F)$ with $f(x)=0$. The fundamental group of the open subscheme $U:=f^{-1}(\PF)\subset C$ is then a finite index subgroup $f_*(\pi_1^{\et}(U_{\oF},x_{v'}))\subset \pi_1^{\et}(\PoF,0_v)$ where $x_{v'}$ and $0_v$ are appropriate tangential base points. By Lemma \ref{pro-alg: finite index surjection} the restriction to this finite index subgroup induces a $G_F$-equivariant surjection $\bQ_p[\pi_1^{\pro-\alg}(\PoF,0_v)]\to \bQ_p[\pi_1^{\pro-\alg}(U_{\oF},x_{v'})]$. On the other hand, the map $\pi_1^{\et}(U_{\oF},x_{v'})\to H^1_{\et}(U_{\oF},\bQ_p)^{\vee}$ yields a surjective map $\pi_1^{\pro-\alg}(U_{\oF},x_{v'})\to \underline{H^1_{\et}(U_{\oF},\bQ_p)^{\vee}}$ onto the corresponding vector group. The linear functions on that vector group then give a subspace $H^1_{\et}(U_{\oF},\bQ_p)\subset \bQ_p[\pi_1^{\pro-\alg}(U_{\oF},x_{v'})]$. In particular, this establishes $H^1_{\et}(C_{\oF},\bQ_p)$ as a subrepresentation of $\bQ_p[\pi_1^{\pro-\alg}(\PoF,0_v)]$.
\end{example}

The only arithmetic input needed for our proof is Belyi's theorem and the rest is a purely algebro-geometric argument that we will now describe. A related result has been recently independently obtained by Joseph Ayoub: it follows from \cite[Corollary 4.47]{ayoub} that the action of the motivic Galois group of $\bQ$ on the motivic fundamental group of $\bP^1_{\obQ}\setminus\{0,1,\infty\}$ is faithful. Our Proposition \ref{intro tensor product} can be used to deduce Theorem \ref{main theorem intro} from this faithfulness result, though, to the best of my understanding, this would give a proof different from ours; in particular, our argument is constructive in that it gives an explicit way of finding a given Galois representation $H^i_{\et}(X_{\oF},\bQ_p)$ inside $\bQ_p[\pi_1^{\pro-\alg}(\PoF,0_v)]^{G_F-\fin}$. 

Denote by $\cC_F$ the set of finite-dimensional representations of $G_F$ that can be realized as subquotients of $\bQ_p[\pi_1^{\pro-\alg}(\PoF,0_v)]^{G_F-\fin}$, for every choice of the tangential base point $0_v$ supported at $0$. To prove the theorem, we will show first that $\cC_F$ is closed under direct sums and tensor products (in particular, every representation from $\cC_F$ appears in $\bQ_p[\pi_1^{\pro-\alg}(\PoF,0_v)]$ with an arbitrarily large multiplicity).

\begin{pr}[Proposition \ref{tensor: tensor product}]\label{intro tensor product}
For any two Galois representations $V_1,V_2\in\cC_F$ the representations $V_1\oplus V_2$ and $V_1\otimes V_2$ also belong to $\cC_F$. 
\end{pr}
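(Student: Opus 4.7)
The plan is to exploit the Hopf algebra structure on $A := \bQ_p[\pi_1^{\pro-\alg}(\PoF,0_v)]$, which is a commutative Hopf algebra object in the category of $G_F$-representations. Under the Tannakian dictionary, the algebra multiplication $m: A \otimes A \to A$ corresponds to tensor product of representations of $\pi_1^{\pro-\alg}$: the span of matrix coefficients of a tensor product $\rho_1 \otimes \rho_2$ is precisely the product, inside $A$, of the matrix coefficient spans of $\rho_1$ and $\rho_2$. Since $m$ is $G_F$-equivariant, this already forces a quotient of $V_1 \otimes V_2$ to lie in $\cC_F$, and the tensor product claim reduces to an injectivity assertion.

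For the tensor product, I would write $V_i = M_i/N_i$ as $G_F$-subquotients with $N_i \subset M_i \subset A$. The multiplication induces a $G_F$-equivariant surjection
\[
V_1 \otimes V_2 \twoheadrightarrow (M_1 \cdot M_2)/(N_1 \cdot M_2 + M_1 \cdot N_2),
\]
exhibiting a quotient of $V_1 \otimes V_2$ as a subquotient of $A$. To upgrade this to a full embedding of $V_1 \otimes V_2$, I would pass to a sufficiently rich finite-index subgroup via Belyi's theorem: pick a Belyi cover $f: C \to \bP^1_F$ étale over $\PF$, and by Lemma~\ref{pro-alg: finite index surjection} obtain a $G_F$-equivariant surjection $A \twoheadrightarrow \bQ_p[\pi_1^{\pro-\alg}(U_{\oF}, x_{v'})]$ for $U = f^{-1}(\PF)$. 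Choosing the cover to have enough complexity (e.g.\ high genus), one can re-realize $V_1, V_2$ as matrix coefficients of representations whose tensor product is faithful enough to preclude cancellation in the multiplication map, yielding $V_1 \otimes V_2$ as a subquotient of the target, hence of $A$.

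For the direct sum, the strategy is to arrange subquotient realizations $V_i = M_i/N_i$ satisfying the disjointness condition $M_1 \cap M_2 \subseteq N_1 + N_2$, which automatically yields $(M_1+M_2)/(N_1+N_2) \cong V_1 \oplus V_2$ as a $G_F$-subquotient of $A$. The freedom afforded by $V_i \in \cC_F$ being realized at every tangential base point supported at $0$, combined with path torsors identifying fundamental groups at distinct base points, should produce essentially disjoint realizations after suitable twists. Alternatively (and more constructively), one bootstraps from the tensor product closure together with Proposition~\ref{intro belyi}: the trivial $2$-dimensional representation $\bQ_p^{\oplus 2}$ lies in $\cC_F$ via Proposition~\ref{intro belyi}, so tensoring gives $V \otimes \bQ_p^{\oplus 2} = V^{\oplus 2} \in \cC_F$, establishing the arbitrarily large multiplicity claim, and a mild extension of the disjointness argument then extracts $V_1 \oplus V_2$ from two such multiplicity-doubled copies.

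The principal obstacle is the injectivity step in the tensor product: while $V_1 \otimes V_2$ is always a legitimate representation of $\pi_1^{\pro-\alg}$ (so its matrix coefficients automatically live in $A$), the specific subquotient realizations of $V_1, V_2$ may correspond to representations whose tensor product shares constituents with other components already present in the ambient subquotient, causing partial collapse under multiplication. Overcoming this requires exploiting the richness of $\pi_1^{\et}(\PoF)$ via Belyi's theorem—passing to a deep enough Belyi cover provides the geometric room for the realizations to be essentially independent, making the multiplication map injective on the tensor product.
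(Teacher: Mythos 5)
Your proposal correctly identifies the central difficulty, but does not overcome it, and the fix you gesture at is not the one that works. The multiplication $m\colon A\otimes A\to A$ on $A=\bQ_p[\pi_1^{\pro-\alg}(\PoF,0_v)]$ is dual to the \emph{diagonal} embedding $\pi_1^{\pro-\alg}\hookrightarrow \pi_1^{\pro-\alg}\times\pi_1^{\pro-\alg}$, so the collapse you worry about is unavoidable and severe: already for $M_1=M_2=M$ with $N_i=0$, commutativity of $A$ forces $M\cdot M$ to be a quotient of $\Sym^2 M$, killing $\Lambda^2(M)$ entirely. No choice of "sufficiently complex'' or "high genus'' Belyi cover repairs this as long as you keep multiplying inside the function ring of a single group, because you are still restricting to a diagonal. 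The idea that is actually needed — and that your proposal never supplies — is a \emph{telescopic} property of $\pi_1^{\et}(\PoF,0_v)$: a $G_F$-stable open subgroup $\Gamma$ together with a $G_F$-equivariant surjection $\Gamma\twoheadrightarrow \pi_1^{\et}(\PoF,0_{v_1})\times\pi_1^{\et}(\PoF,0_{v_2})$ onto a genuine \emph{product} of two copies of the fundamental group of the thrice-punctured line. The paper produces this explicitly from the degree-$3$ Belyi map $f(z)=\tfrac{27}{4}z(z-1)^2$, whose restriction $\bP^1\setminus\{0,\tfrac13,1,\tfrac43,\infty\}\to\PF$ is finite étale, the two projections coming from forgetting complementary pairs of punctures. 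On the product group the function ring is honestly $A_1\otimes A_2$, so $V_1\otimes V_2$ sits inside it as an external tensor product with no cancellation, and Lemma \ref{pro-alg: finite index surjection} transports this back to a subquotient of $A$. Note that this is exactly where the clause "for \emph{every} tangential base point'' in the definition of $\cC_F$ is consumed: $V_1$ and $V_2$ must be realized at the two specific base points $0_{v_1},0_{v_2}$ dictated by the cover. Your tensor-product argument never invokes that clause, which is a symptom that it cannot close.

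For the direct sum your second, "bootstrapping'' route is close to the paper's, but the last step is again left vague ("a mild extension of the disjointness argument''). The clean deduction is: $V_1\oplus V_2$ is a direct summand of $(V_1\oplus\bQ_p)\otimes(V_2\oplus\bQ_p)$, so once tensor-product closure and $\bQ_p^2\in\cC_F$ are in hand (the latter directly from the surjection $\pi_1^{\et}(\PoF,0_v)\to\widehat{\bZ}(1)\to\bZ/2$, which gives $\bQ_p[\bZ/2]\hookrightarrow A$ — no need to route through Proposition \ref{intro belyi}, whose proof in the paper itself depends on the tensor-product lemma and would make your argument circular), the direct-sum closure follows formally. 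Your first route via "essentially disjoint realizations'' and path torsors is not needed and would require additional justification.
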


This is a special feature of the variety $\bP^1_{\oF}\setminus\{0,1,\infty\}$ which comes down to the fact that the Cartesian square of its fundamental group can be established as a subquotient of the fundamental group itself, compatibly with the Galois actions.

The proof of Theorem \ref{main theorem intro} now proceeds by induction on the dimension of $X$. The base case $\dim X=0$ is given by Proposition \ref{intro belyi}. Assuming that the theorem has been proven for all schemes of dimension $<\dim X$, using resolution of singularities and the Gysin sequence, we may freely replace $X$ by a birational variety. We can therefore assume that $X$ admits a smooth proper morphism to a (possibly open) curve. Applying Belyi's theorem to this curve we may moreover assume that $X$ admits a smooth proper morphism $f:X\to \PF$ to the projective line with three punctures.

Leray spectral sequence together with Artin vanishing now tell us that in order to show that the semi-simplification of $H^n(X_{\oF},\bQ_p)$ lies in $\cC_F$ it is enough to do so for the Galois representations $H^0(\PoF,R^{n}\pi_*\bQ_p)$ and $H^1(\PoF,R^{n-1}\pi_*\bQ_p)$. The statement about $0$th cohomology is immediate from the induction assumption, because $H^0(\PoF,R^{n}\pi_*\bQ_p)$ embeds into a stalk $(R^n\pi_*\bQ_p)_y=H^n(f^{-1}(y)_{\oF},\bQ_p)$. The assertion about $1$st cohomology is proven using the following purely algebraic observation

\begin{pr}[Proposition \ref{H1 main}]\label{intro H1}
For a $\bQ_p$-local system $\bL$ on any geometrically connected finite type scheme  $Y$ over $F$ equipped with a base point $y$, the Galois representation $H^1_{\et}(Y_{\oF},\bL)$ is a subquotient of the tensor product $\bQ_p[\pi_1^{\pro-\alg}(Y_{\oF},y)]^{G_F-\fin}\otimes \bL_y$.
\end{pr}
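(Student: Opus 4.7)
The plan is to identify $H^1_{\et}(Y_{\oF},\bL)$ with continuous group cohomology and then realize cocycles as matrix coefficients of the corresponding extension.

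First, I would use the standard identification $H^1_{\et}(Y_{\oF},\bL)\cong H^1(\pi_1^{\et}(Y_{\oF},y),\bL_y)$, where the right-hand side denotes continuous group cohomology. Both sides classify extensions of the constant sheaf $\underline{\bQ_p}$ by $\bL$, respectively extensions of $\bQ_p$ by $\bL_y$ in continuous $\pi_1^{\et}$-representations, and the equivalence of categories between $\bQ_p$-local systems on $Y_{\oF}$ and continuous finite-dimensional $\pi_1^{\et}(Y_{\oF},y)$-representations identifies the two. The identification is manifestly $G_F$-equivariant, so it suffices to produce a $G_F$-equivariant embedding of the space $Z^1_{\mathrm{cts}}(\pi_1^{\et}(Y_{\oF},y),\bL_y)$ of continuous $1$-cocycles into $\bQ_p[\pi_1^{\pro-\alg}(Y_{\oF},y)]^{G_F-\fin}\otimes\bL_y$; then $H^1=Z^1_{\mathrm{cts}}/B^1$ is automatically a subquotient of the target.

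Next, I would realize each cocycle as a matrix-coefficient-valued function. Given a continuous $1$-cocycle $c:\pi_1^{\et}(Y_{\oF},y)\to\bL_y$ and its associated extension $0\to\bL_y\to V_c\to\bQ_p\to 0$, the finite-dimensional continuous representation $V_c$ factors through an algebraic representation $\rho_{V_c}$ of $\pi_1^{\pro-\alg}(Y_{\oF},y)$. Fix a linear lift $e\in V_c$ of $1\in\bQ_p$ and a basis $v_1,\ldots,v_d$ of $\bL_y$ with dual functionals $v_1^*,\ldots,v_d^*\in V_c^\vee$ satisfying $v_i^*(e)=0$. Then $c(g)=\rho_{V_c}(g)e-e\in\bL_y$, and its coordinates are $c_i(g)=v_i^*(\rho_{V_c}(g)e)$, which are exactly matrix coefficients of $V_c$ and therefore lie in $\bQ_p[\pi_1^{\pro-\alg}(Y_{\oF},y)]$. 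Hence $c=\sum_i c_i\otimes v_i$ defines an element of $\bQ_p[\pi_1^{\pro-\alg}(Y_{\oF},y)]\otimes\bL_y$, and unwinding the definitions shows that the resulting map $Z^1_{\mathrm{cts}}\to\bQ_p[\pi_1^{\pro-\alg}(Y_{\oF},y)]\otimes\bL_y$ is injective and $G_F$-equivariant, with $G_F$ acting on $\pi_1^{\pro-\alg}(Y_{\oF},y)$ by conjugation inside $\pi_1^{\et}(Y,y)$ and on $\bL_y$ through the arithmetic local system structure.

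Finally, I would close the argument by a dimension count. Since $Y$ is separated of finite type over $F$, Artin's comparison with complex-analytic cohomology gives that $H^1_{\et}(Y_{\oF},\bL)$ is finite-dimensional over $\bQ_p$; combined with the fact that $B^1\cong\bL_y/\bL_y^{\pi_1}$ is a quotient of $\bL_y$, this forces $Z^1_{\mathrm{cts}}$ to be finite-dimensional as well. Any finite-dimensional $G_F$-stable subspace of $\bQ_p[\pi_1^{\pro-\alg}(Y_{\oF},y)]\otimes\bL_y$ consists of $G_F$-finite vectors, and because $\bL_y$ is itself finite-dimensional the $G_F$-locally finite part of $\bQ_p[\pi_1^{\pro-\alg}]\otimes\bL_y$ coincides with $\bQ_p[\pi_1^{\pro-\alg}]^{G_F-\fin}\otimes\bL_y$. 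Thus $Z^1_{\mathrm{cts}}$ embeds into $\bQ_p[\pi_1^{\pro-\alg}(Y_{\oF},y)]^{G_F-\fin}\otimes\bL_y$, and the quotient $H^1=Z^1_{\mathrm{cts}}/B^1$ is the asserted subquotient. The main technical step is the matrix-coefficient identification in the second paragraph; the rest is a formal consequence of finite-dimensionality and naturality under $G_F$.
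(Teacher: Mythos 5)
Your proposal is correct and follows essentially the same route as the paper: identify $H^1_{\et}(Y_{\oF},\bL)$ with continuous group cohomology, exhibit $H^1$ as a quotient of the finite-dimensional space $Z^1_{\cont}$ of cocycles, and embed $Z^1_{\cont}$ $G_F$-equivariantly into $\bQ_p[\pi_1^{\pro-\alg}(Y_{\oF},y)]\otimes\bL_y$. The only (cosmetic) difference is the mechanism for showing a cocycle extends to the pro-algebraic completion: you pass through the associated extension $V_c$ and its matrix coefficients, which is exactly the alternative the paper records in the remark following its proof, whereas the proof itself just computes the translate $f^g(h)=g^{-1}f(h)+f(g^{-1})$ and observes the orbit span is finite-dimensional.
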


This proves that $H^1(\PoF,R^{n-1}\pi_*\bQ_p)$ is in $\cC_F$ because $\cC_F$ is stable under tensor products, and this finishes the proof of the induction step.

Proposition \ref{intro H1} crucially uses matrix coefficients of non-semi-simple representations of $\pi_1^{\et}(\PoF,0_v)$ and the analogous statement is false for the pro-reductive completion of $\pi_1^{\et}(Y_{\oF},y)$. This begs the question:

\begin{question}\label{reductive question intro}
Which representations of $G_F$ appear as subquotients of the space $\bQ_p[\pi_1^{\pro-\mathrm{red}}(\PoF,0_v)]^{G_F-\fin}$ of regular functions on the pro-reductive completion of $\pi_1^{\et}(\PoF,0_v)$?
\end{question}

More explicitly, this question can be reformulated as asking to classify, for all finite extensions $F'\supset F$, representations of $G_{F'}$ having the form $V\otimes V^{\vee}$ where $V$ is the stalk at $0_v$ of a geometrically irreducible $\obQ_p$-local system on $\bP^1_{F'}\setminus\{0,1,\infty\}$, cf. Lemma \ref{reductive explicit}.

Lastly, let us remark that the usage of tangential base points is important for our proof, but Theorem 1.2 might well be true for classical base points as well. We comment on this in Subsection \ref{base points subsection}, see also Corollary \ref{weil numbers over number field} for an instance of a substantial difference between the Galois action on the fundamental group with respect to a tangential base point and a classical base point. 

{\bf Notation} By a `pointed scheme' or a `scheme equipped with a base point' over a base field $K$ we will mean a pair $(X, x)$ where either $X$ is an arbitrary scheme over $K$ and $x\in X(K)$ is a rational point, or $X$ is a smooth curve over $K$ and $x$ is a tangential base point supported at a $K$-point of $\oX\setminus X$ where $\oX$ is the smooth compactification of $X$ (see Section \ref{tangential base points} for a brief review of tangential base points). For both of these settings, $\pi_1^{\et}(X_{\oK},x)$ will denote the \'etale fundamental group of $X_{\oK}$ with respect to the geometric base point supported at $x$. It comes equipped with a continuous action of $G_K$. Likewise, $\pi_1^{\et}(X,x)=G_K\ltimes\pi_1^{\et}(X_{\oK},x)$ will denote the fundamental group of the scheme $X$. 

{\bf Acknowledgements.} I am grateful to Mark Kisin for comments and suggestions on the exposition, to Joseph Ayoub for pointing me to his work \cite{ayoub}, and to the anonymous referee for corrections and suggestions. This research was partially conducted during the period the author served as a Clay Research Fellow, and enjoyed the hospitality of the Max Planck Institute for Mathematics in Bonn.

\section{Pro-algebraic completion}

Let $\Gamma$ be a topological group. For a finite extension $E$ of $\bQ_p$ we denote by $\Gamma^{\pro-\alg}_E$ the pro-algebraic completion of $\Gamma$ over $E$. It is defined as the affine group scheme\footnote{Recall that every affine group scheme over a field is isomorphic to an inverse limit of linear algebraic group schemes} over $E$ equipped with a continuous (with respect to the inverse limit of the $p$-adic topologies on $E$-points of finite type quotients of $\Gamma_E^{\pro-\alg}$) map $\alpha_{\Gamma}:\Gamma\to \Gamma^{\pro-\alg}_E(E)$ satisfying the following universal property. For any continuous homomorphism $\rho:\Gamma\to GL_n(E)$ there exists a unique morphism $\rho^{\alg}:\Gamma^{\pro-\alg}_E\to GL_{n,E}$ of group schemes such that the induced map on $E$-points fits into the commutative diagram

\begin{equation}
\begin{tikzcd}
\Gamma\arrow[rr, "\rho"]\arrow[rd, "\alpha_{\Gamma}"] & & GL_n(E) \\
& \Gamma^{\pro-\alg}_E(E)\arrow[ru, "\rho^{\alg}"] 
\end{tikzcd}
\end{equation}

Similarly, the pro-reductive completion $\Gamma^{\pro-\red}_{E}$ is the pro-reductive group over $E$ satisfying the analogous universal property among representations $\rho:\Gamma\to GL_n(E)$ for which the Zariski closure of the image is a reductive subgroup of $GL_{n,E}$. These notions were first introduced in \cite{hochschild-mostow}, see also \cite{pridham} for a discussion of these objects in a setup very close to ours. This section reviews all the necessary facts about pro-algebraic completions.

Let $\Func^{\cont}(\Gamma, E)$ be the space of all continuous functions $\Gamma\to E$. It is equipped with an action of $\Gamma$ given by $(\gamma\cdot f)(x)=f(\gamma^{-1}x)$ for $\gamma,x\in \Gamma$ and $f\in \Func^{\cont}(\Gamma, E)$.

\begin{lm}\label{pro-alg: functions description}
(i) For a finite extension $E\subset E'$ there is a canonical isomorphism $\Gamma_{E'}^{\pro-\alg}\simeq \Gamma_E^{\pro-\alg}\times_{\Spec E}\Spec E'$.

(ii) The ring of functions $E[\Gamma^{\pro-\alg}_E]$ admits the following description 

\begin{equation}\label{functions on completion equation}
E[\Gamma^{\pro-\alg}_E] = \{f\in\Func^{\cont}(\Gamma, E)|\text{the span of }\Gamma\cdot f\text{ is finite-dimensional over }E\}
\end{equation}
\end{lm}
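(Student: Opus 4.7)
I would tackle (ii) first, and then deduce (i) by a base-change argument.

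\textbf{Strategy for (ii).} Since every affine group scheme is an inverse limit of linear algebraic groups, write $\Gamma^{\pro-\alg}_E \cong \varprojlim_i G_i$ with each $G_i$ a linear algebraic group over $E$, so that $E[\Gamma^{\pro-\alg}_E] \cong \varinjlim_i E[G_i]$. The inclusion ``$\subset$'' in \eqref{functions on completion equation} is then nearly formal: each $E[G_i]$ is a union of finite-dimensional subrepresentations under the right regular action of $G_i$, and these pull back along the continuous map $\Gamma \to G_i(E)$ to finite-dimensional $\Gamma$-invariant subspaces of $\Func^{\cont}(\Gamma, E)$. The reverse inclusion is the substantive direction. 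Given $f\in \Func^{\cont}(\Gamma, E)$ whose $\Gamma$-orbit spans a finite-dimensional $E$-vector space $V$, I would realize $f$ as a matrix coefficient of a continuous finite-dimensional representation of $\Gamma$ and then invoke the universal property.

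\textbf{The continuity step.} Fix a basis $f_1,\dots,f_n$ of $V$ and write $\gamma\cdot f_j=\sum_i a_{ij}(\gamma) f_i$. Since $f_1,\dots,f_n$ are $E$-linearly independent as functions on $\Gamma$, one can inductively pick $x_1,\dots,x_n\in\Gamma$ such that the matrix $(f_i(x_k))_{i,k}$ is invertible (at each step the simultaneous vanishing locus is a proper subspace of the span, so a new separating point exists). Evaluating the relation above at each $x_k$ gives
\begin{equation*}
f_j(\gamma^{-1}x_k)=\sum_i a_{ij}(\gamma)\, f_i(x_k),
\end{equation*}
which expresses each matrix entry $a_{ij}(\gamma)$ as an $E$-linear combination of the continuous functions $\gamma\mapsto f_j(\gamma^{-1}x_k)$. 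Hence the representation $\rho_V:\Gamma\to GL(V)\cong GL_n(E)$ is continuous, the universal property supplies $\rho_V^{\alg}:\Gamma^{\pro-\alg}_E\to GL_{n,E}$, and $f$ appears as the pullback of a matrix coefficient of $\rho_V^{\alg}$, putting it in $E[\Gamma^{\pro-\alg}_E]$.

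\textbf{Strategy for (i).} I would use (ii) to identify both sides. The natural map $E'[\Gamma^{\pro-\alg}_{E'}]\otimes_{E'}E\to E[\Gamma^{\pro-\alg}_E]$, extending scalars on functions, is injective because the $\Gamma$-span commutes with extension of scalars. For surjectivity, given $f\in E[\Gamma^{\pro-\alg}_E]$ with finite-dimensional $E$-orbit span $V$, restriction of scalars along $E'\subset E$ presents $V$ as a finite-dimensional $E'$-vector space on which $\Gamma$ acts continuously; by (ii) applied over $E'$, any $E'$-linear functional on $V$ composed with $\gamma\mapsto\gamma\cdot f$ lies in $E'[\Gamma^{\pro-\alg}_{E'}]$. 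Picking an $E'$-basis of $E$ and decomposing the $E$-linear functional on $V$ that recovers $f$ expresses $f$ as a sum $\sum_i e_i\otimes f_i$ with $e_i\in E$ and $f_i\in E'[\Gamma^{\pro-\alg}_{E'}]$, proving surjectivity. Matching coordinate rings gives the claimed canonical isomorphism of group schemes.

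\textbf{Main obstacle.} The only step that is not formal bookkeeping is the continuity of $\rho_V$ in (ii); everything else reduces to the bookkeeping of matrix coefficients and base change. The trick of extracting matrix entries by evaluating $\Gamma$-translates at a cleverly chosen separating set $\{x_1,\dots,x_n\}\subset\Gamma$ is the crux, and it hinges only on the linear independence of the $f_i$ as functions on $\Gamma$.
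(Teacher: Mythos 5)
Your proposal follows the same route as the paper's proof: the inclusion $\subset$ by pulling back matrix coefficients from $GL_n$, the reverse inclusion by realizing $f$ as a matrix coefficient of the continuous $\Gamma$-representation on the span of $\Gamma\cdot f$, and part (i) by observing that the right-hand side of \eqref{functions on completion equation} is compatible with finite extensions of $E$. The only substantive difference is that you make explicit the continuity of $\rho_V$ via the separating-points trick (a step the paper asserts without proof, and your argument for it is correct); for the stated equality, rather than just a description of the image of $E[\Gamma_E^{\pro-\alg}]\to\Func^{\cont}(\Gamma,E)$, one should also record, as the paper does, that this map is injective because a regular function on $\Gamma_E^{\pro-\alg}$ factors through some $GL_n$-quotient and is therefore determined by its values on the Zariski-dense image of $\Gamma$.
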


\begin{proof}
(ii) There is a map $\alpha_{\Gamma}^*:E[\Gamma^{\pro-\alg}_E]\to \Func^{\cont}(\Gamma, E)$ given by precomposing with $\alpha_{\Gamma}$. By definition, a regular function on $\Gamma^{\pro-\alg}_E$ factors through some homomorphism $\Gamma^{\pro-\alg}_E\to GL_{n, E}$. So, to prove that the image of $\alpha_{\Gamma}^*$ is contained in the right-hand side of (\ref{functions on completion equation}) it is enough to observe that for any element $f\in E[GL_{n,E}]$ the orbit $GL_n(E)\cdot f$ spans a finite-dimensional $E$-vector space.

Next, let $f$ be an element of the right-hand side of (\ref{functions on completion equation}). The span of $\Gamma\cdot f$ gives a continuous finite-dimensional representation $V$ of $\Gamma$ and the function $f$ factors through the homomorphism $\Gamma\to GL(V)$, hence it lies in the image of $\alpha_{\Gamma}^*$. Finally, $\alpha_{\Gamma}^*$ is injective  because any regular function on $\Gamma^{\pro-\alg}_E$ factors through an algebraic group and a homomorphism from $\Gamma^{\pro-\alg}_E$ to an algebraic group is completely determined by its restriction to $\Gamma$. 

Part (i) now follows because the right-hand side of (\ref{functions on completion equation}) satisfies base change under finite extensions of $E$.
\end{proof}

\begin{lm}\label{pro-alg: finite index surjection}
If $\Gamma_1\subset \Gamma$ is an open subgroup of finite index then the restriction map $E[\Gamma^{\pro-\alg}_E]\to E[\Gamma_{1,E}^{\pro-\alg}]$ is surjective.
\end{lm}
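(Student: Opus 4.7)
The plan is to use the explicit description of $E[\Gamma^{\pro-\alg}_E]$ from Lemma \ref{pro-alg: functions description}(ii): an element of $E[\Gamma^{\pro-\alg}_E]$ is nothing but a continuous function $\Gamma \to E$ whose $\Gamma$-translates span a finite-dimensional $E$-vector space. The restriction map in question is then literally the restriction of functions $\Gamma \to E$ along the inclusion $\Gamma_1 \subset \Gamma$. Given a continuous $g \colon \Gamma_1 \to E$ with finite-dimensional $\Gamma_1$-span, the natural candidate preimage is the extension by zero $\tilde g \colon \Gamma \to E$ defined by $\tilde g|_{\Gamma_1} = g$ and $\tilde g|_{\Gamma \setminus \Gamma_1} = 0$.

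First I would verify that $\tilde g$ is continuous: since $\Gamma_1$ is open of finite index, it is clopen, and so is its complement (a finite union of cosets, each open). Thus $\tilde g$ is continuous because it is continuous on the two clopen pieces.

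The main step is to check that the $\Gamma$-orbit of $\tilde g$ spans a finite-dimensional space. For any $\gamma \in \Gamma$, the function $\gamma \cdot \tilde g$, given by $x \mapsto \tilde g(\gamma^{-1}x)$, is supported on the coset $\gamma\Gamma_1$; writing an element of this coset as $\gamma\gamma_1'$ with $\gamma_1' \in \Gamma_1$, one computes $(\gamma\cdot\tilde g)(\gamma\gamma_1') = g(\gamma_1')$, and if we let $\gamma$ range over $\gamma_0 \Gamma_1$ for a fixed coset representative $\gamma_0$, the functions $\gamma\cdot \tilde g$ restricted to $\gamma_0\Gamma_1$ are precisely the $\Gamma_1$-translates of $g$ (transported via left multiplication by $\gamma_0$). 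Hence the span of $\Gamma\cdot\tilde g$ sits inside
\[
\bigoplus_{\gamma_0 \in \Gamma/\Gamma_1} V_{\gamma_0},
\]
where each $V_{\gamma_0}$ is a copy of the span of $\Gamma_1\cdot g$, and the direct sum is finite by the finite-index hypothesis. Thus the $\Gamma$-span of $\tilde g$ is finite-dimensional, so $\tilde g \in E[\Gamma^{\pro-\alg}_E]$ by Lemma \ref{pro-alg: functions description}(ii), and by construction it restricts to $g$.

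There is no serious obstacle here; the only nontrivial point is organizing the coset decomposition to see that the orbit span of $\tilde g$ is a finite direct sum of copies of the orbit span of $g$. The crucial use of the hypotheses is that $\Gamma_1$ is \emph{open} (so the extension by zero is continuous) and of \emph{finite index} (so the direct sum over cosets is finite-dimensional).
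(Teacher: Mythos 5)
Your proof is correct and follows essentially the same route as the paper's: both use the description of $E[\Gamma^{\pro-\alg}_E]$ from Lemma \ref{pro-alg: functions description}(ii), extend the given function from $\Gamma_1$ to $\Gamma$, and bound the $\Gamma$-orbit span by a finite direct sum over cosets of copies of the $\Gamma_1$-orbit span. The only (immaterial) difference is that you extend by zero, while the paper copies the function onto each coset; your verification of continuity and of the finite-dimensionality of the orbit span is complete.
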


\begin{proof}
We will use the description of functions on the pro-algebraic completion provided by the right-hand side of (\ref{functions on completion equation}). Let $f_1:\Gamma_1\to E$ be a continuous function whose translates span a finite-dimensional space. Pick representatives for the left cosets of $\Gamma_1\subset \Gamma$ so that $\Gamma=\bigsqcup\limits_{i=1}^dg_i\Gamma_1$ for some $g_2,\dots g_d\in \Gamma$ and $g_1=1$. Then define a function $f:\Gamma\to E$ be declaring $f(g_ih)=f_1(h)$ for every $h\in \Gamma_1$. It evidently extends $f_1$ onto $f$ and its $\Gamma$-translates span a finite-dimensional space.
\end{proof}
\begin{rem}
Lemma \ref{pro-alg: finite index surjection} says that $\Gamma_{1,E}^{\pro-\alg}$ is a closed subgroup scheme of $\Gamma_E^{\pro-\alg}$. The choice of representatives of left cosets $\Gamma/\Gamma_1$ moreover induces a $\Gamma_{1,E}^{\pro-\alg}$-equivariant isomorphism of algebras $E[\Gamma^{\pro-\alg}_E]\simeq E[\Gamma^{\pro-\alg}_{1,E}]\otimes E[\Gamma/\Gamma_1]$. In particular the natural map identifies the quotient $\Gamma_{E}^{\pro-\alg}/\Gamma_{1,E}^{\pro-\alg}$ with the constant finite group scheme $\underline{\Gamma/\Gamma_1}_{E}$.
\end{rem}

\begin{example}\label{pro-alg:cyclic example}Let $\Gamma$ be the infinite cyclic group $\bZ$ endowed with the discrete topology. As implied by the Jordan decomposition, $\Gamma_{\obQ_p}^{\pro-\alg}\simeq \bG_{a,\obQ_p}\times \widehat{\bZ} \times T$ where $T$ is the pro-torus with character group $X^*(T)=\obQ_p^{\times}/\mu_{\infty}$, cf. \cite[Example 1]{bass-lubotzky-magid-mozes}. Here we take $\Gamma_{\obQ_p}^{\pro-\alg}$ to mean the base change $\Gamma_{\bQ_p}^{\pro-\alg}\times_{\bQ_p}\obQ_p$. The proalgebraic completion $\Gamma_{\bQ_p}^{\pro-\alg}$ itself can be described as $\bG_a\times H_0\times T_0$ where $H_0$ is a pro-finite \'etale group scheme corresponding to the $G_{\bQ_p}$-module $\widehat{\bZ}(1)$ and $T_0$ is the (non-split) pro-torus with the $G_{\bQ_p}$-module of characters given by $X^*(T_0)=\obQ_p^{\times}/\mu_{\infty}$.

Similarly, for the pro-finite group $\Gamma=\widehat{\bZ}$ the pro-algebraic completion over $\obQ_p$ can be described as $\bG_{a,\obQ_p}\times\widehat{\bZ}\times T^+$ where $T^+$ is the pro-torus over $\obQ_p$ with the character group $X^*(T^+)=\overline{\bZ}_p^{\times}/\mu_{\infty}$, reflecting the fact that the eigenvalues of a topological generator of $\widehat{\bZ}$ in a continuous representation must belong to $\overline{\bZ}_p$.
\end{example}

We will never work with the pro-algebraic completion in terms of its points but will rather analyze the ring of regular functions on it. Given a continuous representation $\rho:\Gamma\to GL(V)$ on a finite-dimensional $E$-vector space $V$, denote by $\cF(V)$ the $E$-span of the image of the composition $\Gamma\xrightarrow{\rho}GL(V)\subset \End_E V$. The dual space $\cF(V)^{\vee}$ is sometimes referred to as the space of {\it matrix coefficients} of the representation $V$. One might think of the space of functions on $\Gamma^{\pro-\alg}_E$ as of the ring of matrix coefficients of all representations: 

\begin{lm}\label{pro-alg: matrix coefficients}
For every representation $V$, there is a natural embedding $\cF(V)^{\vee}\subset E[\Gamma_{E}^{\pro-\alg}]$ and 

\begin{enumerate}[(i)]
\item $E[\Gamma_{E}^{\pro-\alg}]$ is equal to the union of these subspaces for varying $V$.

\item The space $E[\Gamma_E^{\pro-\red}]$ can be identified with the subspace of $E[\Gamma_E^{\pro-\alg}]$ obtained by taking the union of the subspaces $\cF(V)^{\vee}$ for all semi-simple representations $V$.
\end{enumerate}

\end{lm}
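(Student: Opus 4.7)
The plan is to define, for each continuous $\rho:\Gamma\to GL(V)$ with $V$ a finite-dimensional $E$-vector space, the matrix coefficient map $m_V:\cF(V)^{\vee}\to \Func^{\cont}(\Gamma,E)$ sending $\phi\mapsto (\gamma\mapsto \phi(\rho(\gamma)))$. Continuity is clear from continuity of $\rho$, and $m_V$ is injective since $\cF(V)$ is by definition spanned by $\rho(\Gamma)$, so any $\phi\in\cF(V)^{\vee}$ vanishing on every $\rho(\gamma)$ is already zero. To check that the image lies in $E[\Gamma_E^{\pro-\alg}]$ I invoke Lemma \ref{pro-alg: functions description}(ii): a $\Gamma$-translate of a matrix coefficient of $V$ is again a matrix coefficient of $V$ (attached to a different functional obtained by composing with left translation by $\rho(\delta^{-1})$ inside $\cF(V)$), so the span of all $\Gamma$-translates sits inside the finite-dimensional subspace $m_V(\cF(V)^{\vee})\subset \Func^{\cont}(\Gamma,E)$.

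\textbf{Proof of (i).} For $f\in E[\Gamma_E^{\pro-\alg}]$, Lemma \ref{pro-alg: functions description}(ii) ensures that the span $V_f\subset \Func^{\cont}(\Gamma,E)$ of $\Gamma\cdot f$ is a finite-dimensional continuous representation. I will then exhibit $f$ as the matrix coefficient of $V_f^{\vee}$ associated to the pair $(f,\ev_1)$, where $\ev_1\in V_f^{\vee}$ is evaluation at the identity and $f\in V_f=(V_f^{\vee})^{\vee}$. Using the convention $(\gamma\cdot g)(x)=g(\gamma^{-1}x)$ and the definition of the dual representation, a direct calculation gives $\langle f,\rho_{V_f^{\vee}}(\gamma)\ev_1\rangle = \ev_1(\rho_{V_f}(\gamma^{-1})f)=f(\gamma)$. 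Hence $f\in\cF(V_f^{\vee})^{\vee}$, proving (i).

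\textbf{Proof of (ii).} The essential input is the characteristic-zero statement that a continuous representation $\rho:\Gamma\to GL(V)$ is semi-simple if and only if the Zariski closure $\overline{\rho(\Gamma)}\subset GL(V)$ is reductive. Granting this: when $V$ is semi-simple, the universal property of $\Gamma_E^{\pro-\red}$ upgrades $\rho$ to an algebraic morphism $\Gamma_E^{\pro-\red}\to GL_V$, so that $m_V$ factors through $E[\Gamma_E^{\pro-\red}]\subset E[\Gamma_E^{\pro-\alg}]$. Conversely, for $f\in E[\Gamma_E^{\pro-\red}]$ the span $V_f$ of $\Gamma$-translates is a finite-dimensional subrepresentation of the regular representation of the pro-reductive affine group scheme $\Gamma_E^{\pro-\red}$, hence a semi-simple rational representation of that group (since any finite-dimensional rational representation of a reductive group in characteristic zero is semi-simple) and a fortiori semi-simple as a $\Gamma$-representation. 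Applying the computation from (i) then realizes $f$ as an element of $\cF(V_f^{\vee})^{\vee}$ with $V_f^{\vee}$ semi-simple.

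\textbf{Main obstacle.} Everything aside from one point is a formal exercise on top of Lemma \ref{pro-alg: functions description}(ii). The only genuinely nontrivial ingredient is the equivalence, in part (ii), between semi-simplicity of $V$ and reductivity of $\overline{\rho(\Gamma)}\subset GL(V)$, and it is precisely this step that makes the characteristic-zero hypothesis on $E$ unavoidable.
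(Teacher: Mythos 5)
Your proof is correct and takes essentially the same approach as the paper: both realize a function $f$ with finite-dimensional orbit span as a matrix coefficient of the representation $V_f$ (you dualize to $V_f^{\vee}$, the paper works with $V_f$ directly, which is immaterial), and both handle (ii) via the reductivity/semi-simplicity dictionary for the Zariski closure of the image. The only step to tighten is the phrase ``a fortiori semi-simple as a $\Gamma$-representation'': restriction to a subgroup does not preserve semi-simplicity in general, and the correct justification here is that $\Gamma$ is Zariski dense in $\Gamma_E^{\pro-\red}(E)$, so the two subrepresentation lattices coincide --- which is exactly the remark the paper makes at this point.
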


\begin{proof}
The space $(\End_EV)^{\vee}$ of linear functions on the vector space $\End_E V$ maps to $E[\Gamma_E^{\pro-\alg}]$ via restriction to $GL_V$ and pullback along the map $\rho^{\alg}:\Gamma_E^{\pro-\alg}\to GL_V$. Its image in $E[\Gamma_E^{\pro-\alg}]$ is canonically dual to $\cF(V)$. 

Given a function $f\in E[\Gamma_E^{\pro-\alg}]$ denote by $f^{\op}$ the function $f^{\op}(\gamma)=f(\gamma^{-1})$ and let $V$ be the finite-dimensional $E$-span of the orbit of $f^{\op}$ under the action of $\Gamma$. By adjunction, we then obtain a function $\alpha:\Gamma\to V^{\vee}$. Denote by $\ev_1:V\to E$ the functional corresponding to the element $\alpha(1)$.

The function $f$ can be obtained by composing the map $\Gamma\to \End(V)$ with the map $\End(V)\to E$ that sends an endomorphism $A:V\to V$ to $\ev_1(A(f^{\op}))$. Thus $f$ lies in the subspace $\cF(V)^{\vee}\subset E[\Gamma_E^{\pro-\alg}]$. This finishes the proof of part (i).

To show part (ii), note first that the canonical surjection of group schemes $\Gamma_E^{\pro-\alg}\twoheadrightarrow\Gamma_E^{\pro-\red}$ induces an inclusion $E[\Gamma_E^{\pro-\red}]\subset E[\Gamma_E^{\pro-\alg}]$ and for a semi-simple representation $V$ the subspace $\cF(V)^{\vee}$ is contained inside $E[\Gamma_E^{\pro-\red}]$. Conversely, given a function $f\in E[\Gamma_E^{\pro-\red}]$ the above strategy produces a representation $V$ of $\Gamma$ that factors through $\Gamma_E^{\pro-\red}$ because the action of $\Gamma$ via translations on the space $E[\Gamma_E^{\pro-\alg}]$ preserves the subspace $E[\Gamma_E^{\pro-\red}]$ and factors through $\Gamma^{\pro-\red}_E$ on that subspace. Therefore $V$ is semi-simple as a representation of the pro-reductive group $\Gamma^{\pro-\red}_E$ and hence is semi-simple as a representation of $\Gamma$ because the image of $\Gamma$ is Zariski dense in $\Gamma^{\pro-\red}_E(E)$.
\end{proof}

Let $K$ be any base field, with a chosen algebraic closure $\oK\supset K$ and $(X, x)$ be a $K$-scheme equipped with a base point (that is, $x$ is a $K$-point or a tangential base point at infinity). For brevity, we denote the pro-algebraic (resp. pro-reductive) completion of the topological group $\pi_1^{\et}(X,x)$ over $E=\bQ_p$ by $\pi_1^{\pro-\alg}(X,x)$ (resp. $\pi_1^{\pro-\red}(X, x)$). The action of $G_K$ on $\pi_1^{\et}(X_{\oK},x)$ induces an action on the spaces of functions $\bQ_p[\pi_1^{\pro-\alg}(X_{\oK},x)]$ and $\bQ_p[\pi_1^{\pro-\red}(X_{\oK},x)]$.

\begin{example}In general, this action is not locally finite. For instance, consider the case of $X=\bG_{m,K}$ over a field $K$ of characteristic zero containing only finitely many roots of unity. Grothendieck's quasi-unipotent monodromy theorem  comes down to the fact that for a function $f$ on the pro-algebraic completion of $\pi_1^{\et}(X_{\oK},\ox)=\widehat{\bZ} (1)$ the span of its $G_K$-orbit is finite-dimensional if and only if $f$ factors through the canonical map $\widehat{\bZ}_{\obQ_p}^{\pro-\alg}\twoheadrightarrow \bG_{a,\obQ_p}\times\widehat{\bZ}$. 
\end{example}

We will concern ourselves only with the locally finite subspace of $\bQ_p[\pi_1^{\pro-\alg}(X_{\oK},x)]$ which admits an alternative description in terms of the pro-algebraic completion of the arithmetic fundamental group:

\begin{lm}\label{pro-alg: galois finite}
The image of the restriction map $\bQ_p[\pi_1^{\pro-\alg}(X,x)]\to \bQ_p[\pi_1^{\pro-\alg}(X_{\oK},x)]$ coincides with the subspace \begin{multline}\algfun{X_{\oK},x}:=\\ \{f\in \bQ_p[\pi_1^{\pro-\alg}(X_{\oK},x)]\mid \text{the span of }\sigma\cdot f\text{ for }\sigma\in G_K\text{ is finite-dimensional}\}\end{multline}
\end{lm}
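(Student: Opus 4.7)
The plan is to combine Lemma \ref{pro-alg: functions description}(ii), which characterizes $\bQ_p[\Gamma^{\pro-\alg}_E]$ as the space of continuous $\bQ_p$-valued functions on $\Gamma$ whose left-translation orbit spans a finite-dimensional subspace, with the semidirect product decomposition $\pi_1^{\et}(X,x)=G_K\ltimes\pi_1^{\et}(X_{\oK},x)$ recorded in the notation section. The inclusion ``image $\subseteq$ Galois-finite vectors'' should be essentially formal, while the reverse inclusion requires building a finite-dimensional representation of $\pi_1^{\et}(X,x)$ out of a Galois-finite function on $\pi_1^{\et}(X_{\oK},x)$.

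For the easy direction, I take $f\in\bQ_p[\pi_1^{\pro-\alg}(X,x)]$; by Lemma \ref{pro-alg: functions description}(ii) its left-translation orbit spans a finite-dimensional $\pi_1^{\et}(X,x)$-subrepresentation $W$. Restricting the associated matrix coefficients of $W$ to $\pi_1^{\et}(X_{\oK},x)$ produces a finite-dimensional subspace of $\bQ_p[\pi_1^{\pro-\alg}(X_{\oK},x)]$ containing $f|_{\pi_1^{\et}(X_{\oK},x)}$. The key observation is that via the splitting $G_K\hookrightarrow\pi_1^{\et}(X,x)$, the restriction of the $\pi_1^{\et}(X,x)$-action on $W$ to $G_K$ intertwines with the Galois action on functions on $\pi_1^{\et}(X_{\oK},x)$, so $f|_{\pi_1^{\et}(X_{\oK},x)}$ is Galois-finite.

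For the reverse direction, let $f_0\in\bQ_p[\pi_1^{\pro-\alg}(X_{\oK},x)]$ have a finite-dimensional $G_K$-orbit. I would first verify by a direct calculation the compatibility identity $\sigma\cdot(\gamma\cdot f_0)=\sigma(\gamma)\cdot(\sigma\cdot f_0)$ for $\sigma\in G_K$ and $\gamma\in\pi_1^{\et}(X_{\oK},x)$, which says that left translation and the Galois action combine into a well-defined action of the semidirect product $\pi_1^{\et}(X,x)$ on $\bQ_p[\pi_1^{\pro-\alg}(X_{\oK},x)]$. Choosing a basis $\sigma_1\cdot f_0,\dots,\sigma_n\cdot f_0$ of the $G_K$-orbit of $f_0$, each $\sigma_i\cdot f_0$ still lies in $\bQ_p[\pi_1^{\pro-\alg}(X_{\oK},x)]$ and hence has a finite-dimensional $\pi_1^{\et}(X_{\oK},x)$-translation orbit. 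The compatibility then forces $U:=\sum_i\pi_1^{\et}(X_{\oK},x)\cdot(\sigma_i\cdot f_0)$ to be a finite-dimensional continuous representation of $\pi_1^{\et}(X,x)$ containing $f_0$. The matrix coefficient $f:\delta\mapsto(\rho_U(\delta^{-1})f_0)(1)$ is then a continuous function on $\pi_1^{\et}(X,x)$ that belongs to $\bQ_p[\pi_1^{\pro-\alg}(X,x)]$ by Lemma \ref{pro-alg: functions description}(ii), and manifestly restricts to $f_0$.

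The main subtle point is checking that the $G_K$-action on the finite-dimensional space $U$ is continuous for the profinite topology on $G_K$ — this is needed to conclude that $U$ defines a bona fide continuous representation of $\pi_1^{\et}(X,x)$ and hence that $f$ is continuous. I expect this to follow from the fact that $U$ lies inside the locally $G_K$-finite subspace of $\bQ_p[\pi_1^{\pro-\alg}(X_{\oK},x)]$, so the image of $G_K$ in $GL(U)$ automatically factors through a $p$-adic analytic quotient, but writing this down carefully (and checking the compatibility identity on the nose) is the only work in the proof.
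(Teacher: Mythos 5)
Your proposal is correct and is exactly the argument the paper has in mind: the paper dismisses this lemma as ``immediate from Lemma \ref{pro-alg: functions description} (ii)'', and your two directions (bi-translation-finiteness of matrix coefficients for the forward inclusion, and assembling the $G_K$-orbit of $f_0$ together with its $\pi_1^{\et}(X_{\oK},x)$-translates into a finite-dimensional $\pi_1^{\et}(X,x)=G_K\ltimes\pi_1^{\et}(X_{\oK},x)$-representation for the reverse) are the details being suppressed. For the continuity worry at the end, the cleaner route is not a $p$-adic analytic quotient but the observation that evaluation at finitely many elements of $\pi_1^{\et}(X_{\oK},x)$ spans $U^{\vee}$, so the matrix coefficients $\sigma\mapsto u(\sigma^{-1}(x_j))$ of the $G_K$-action are visibly continuous because each $u\in U$ is a continuous function and $G_K$ acts continuously on $\pi_1^{\et}(X_{\oK},x)$.
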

\begin{proof}
By Lemma \ref{pro-alg: functions description} (ii) the space of functions $\bQ_p[\pi_1^{\pro-\alg}(X,x)]$ on the pro-algebraic fundamental group can be described as the subspace of the space of all continuous functions $\Func^{\cont}(\pi_1^{\et}(X,x),\bQ_p)$ whose orbit under $\pi_1^{\et}(X,x)=G_K\ltimes \pi_1^{\et}(X_{\oK},x)$ spans a finite-dimensional subspace. Therefore, the image of the restriction map is contained in $\algfun{X_{\oK},x}$.

To prove the converse inclusion, consider the map $\varepsilon:\Func^{\cont}(\pi_1^{\et}(X_{\oK},x))\to \Func^{\cont}(\pi_1^{\et}(X_{\oK},x))$ defined by composing a continuous function with the projection $\pi_1^{\et}(X,x)=G_K\ltimes \pi_1^{\et}(X_{\oK},x)\to \pi_1^{\et}(X_{\oK},x)$ onto the second component of the semi-direct product. For an element $(\sigma, \gamma)\in G_K\ltimes \pi_1^{\et}(X_{\oK},x)$ we have $(\sigma,\gamma)\cdot \varepsilon(f)=\gamma\cdot (\sigma\cdot \varepsilon(f))$. In particular, if $f$ is an element of $\algfun{X_{\oK},x}$ then $\varepsilon(f)$ is a continuous function on the group $\pi_1^{\et}(X,x)$ whose orbit under left translations spans a finite-dimensional space. By construction $\varepsilon(f)|_{\pi_1^{\et}(X_{\oK},x)}=f$, hence for every $f\in \algfun{X_{\oK},x}$ the function $\varepsilon(f)$ gives an element of $\bQ_p[\pi_1^{\et}(X,x)]$ extending $f$, as desired.
\end{proof}

Lemma \ref{pro-alg: matrix coefficients} gives a way to produce elements in $\algfun{X_{\oK},x}$ from local systems on $X$. Viewing a $\bQ_p$-local system $\bL$ on $X$ as a representation of $\pi_1^{\et}(X_{\oK},x)$ on the space $\bL_x$, we get a subspace $\cF(\bL)^{\vee}\subset \bQ_p[\pi_1^{\pro-\alg}(X_{\oK},x)]$. Note that applying the construction $\cF(-)$ to the space $\bL_{x}$ as a representation of the arithmetic fundamental group $\pi_1^{\et}(X,x)$ might potentially yield a larger space but only matrix coefficients of the geometric representation $\pi_1^{\et}(X_{\oK},x)\to GL(\bL_x)$ make a contribution to $\bQ_p[\pi_1^{\et}(X_{\oK},x)]$. Lemma \ref{pro-alg: matrix coefficients} applied to $\Gamma=\pi_1^{\et}(X,x)$ and Lemma \ref{pro-alg: galois finite} imply:

\begin{lm}\label{pro-alg: functions as matrix coeffs}For any local system $\bL$ on $X$ the subspace $\cF(\bL)^{\vee}\subset \bQ_p[\pi_1^{\pro-\alg}(X_{\oK},x)]$ consists of functions locally finite for the $G_K$-action and the space $\bQ_p[\pi_1^{\pro-\alg}(X_{\oK},x)]^{G_K-\fin}$ is the union of such subspaces for varying $\bL$.
\end{lm}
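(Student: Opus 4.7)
The plan is to deduce the lemma as a direct consequence of Lemma \ref{pro-alg: matrix coefficients} applied to the arithmetic fundamental group $\Gamma=\pi_1^{\et}(X,x)$, combined with the description of the $G_K$-locally finite subspace given by Lemma \ref{pro-alg: galois finite}. The bridge is the standard equivalence between continuous finite-dimensional $\bQ_p$-representations of $\pi_1^{\et}(X,x)$ and $\bQ_p$-local systems on $X$: to a local system $\bL$ corresponds the monodromy representation on the stalk $\bL_x$, carrying both the geometric action of $\pi_1^{\et}(X_{\oK},x)$ and the full arithmetic action of $\pi_1^{\et}(X,x)=G_K\ltimes\pi_1^{\et}(X_{\oK},x)$.

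For the first assertion, I would start from the inclusion $\cF(\bL)\subset\End(\bL_x)$ defined via the geometric monodromy, and note that the same stalk carries a larger arithmetic span $\cF_{\mathrm{arith}}(\bL)\subset\End(\bL_x)$ coming from $\pi_1^{\et}(X,x)$. Lemma \ref{pro-alg: matrix coefficients} applied to $\Gamma=\pi_1^{\et}(X,x)$ then embeds $\cF_{\mathrm{arith}}(\bL)^{\vee}\hookrightarrow\bQ_p[\pi_1^{\pro-\alg}(X,x)]$. A matrix coefficient $\gamma\mapsto \phi(\rho(\gamma)v)$ of the arithmetic representation restricts, along $\pi_1^{\et}(X_{\oK},x)\hookrightarrow\pi_1^{\et}(X,x)$, to the corresponding matrix coefficient of the geometric representation; thus restriction carries $\cF_{\mathrm{arith}}(\bL)^{\vee}$ onto $\cF(\bL)^{\vee}\subset\bQ_p[\pi_1^{\pro-\alg}(X_{\oK},x)]$. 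Since the image of the restriction map $\bQ_p[\pi_1^{\pro-\alg}(X,x)]\to\bQ_p[\pi_1^{\pro-\alg}(X_{\oK},x)]$ is exactly the $G_K$-locally finite subspace by Lemma \ref{pro-alg: galois finite}, we conclude $\cF(\bL)^{\vee}\subset \bQ_p[\pi_1^{\pro-\alg}(X_{\oK},x)]^{G_K-\fin}$.

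For the reverse inclusion, I would take an arbitrary $f\in\bQ_p[\pi_1^{\pro-\alg}(X_{\oK},x)]^{G_K-\fin}$. By Lemma \ref{pro-alg: galois finite}, $f$ lifts to a function $\tf\in\bQ_p[\pi_1^{\pro-\alg}(X,x)]$. By Lemma \ref{pro-alg: matrix coefficients}(i) applied to $\pi_1^{\et}(X,x)$, the function $\tf$ lies in $\cF_{\mathrm{arith}}(V)^{\vee}$ for some continuous finite-dimensional representation $V$ of $\pi_1^{\et}(X,x)$; such a $V$ is by definition a $\bQ_p$-local system $\bL$ on $X$ with stalk $\bL_x=V$. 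Restricting back to the geometric completion sends $\tf$ to $f$ and lands in $\cF(\bL)^{\vee}$ by the identification of the previous paragraph. Hence $f\in\cF(\bL)^{\vee}$, proving that the $G_K$-locally finite subspace is the union of the $\cF(\bL)^{\vee}$ over all local systems $\bL$.

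The only subtle point, which I do not expect to be a real obstacle, is verifying that the restriction map from arithmetic matrix coefficients to geometric matrix coefficients is surjective onto $\cF(\bL)^{\vee}$; this is immediate once one writes matrix coefficients in the standard form $v\otimes\phi\mapsto (\gamma\mapsto \phi(\rho(\gamma)v))$ and notes that $\cF(\bL)^{\vee}$ is by definition the image of $\End(\bL_x)^{\vee}=(\bL_x)\otimes(\bL_x)^{\vee}$ along the geometric representation. Everything else is a formal application of the two preceding lemmas.
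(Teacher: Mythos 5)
Your proposal is correct and follows exactly the route the paper intends: the paper gives no separate proof of this lemma, stating only that it follows from Lemma \ref{pro-alg: matrix coefficients} applied to $\Gamma=\pi_1^{\et}(X,x)$ together with Lemma \ref{pro-alg: galois finite}, which is precisely the argument you spell out. The details you supply (restriction of arithmetic matrix coefficients surjecting onto geometric ones, and lifting a $G_K$-finite function to the arithmetic completion) are the right ones and are handled correctly.
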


A useful consequence of Lemmas \ref{pro-alg: finite index surjection} and \ref{pro-alg: galois finite} is

\begin{lm}\label{pro-alg: fet surjection}
If $(X,x)\to (Y,y)$ is a finite \'etale cover of $K$-schemes equipped with base points, we get a $G_K$-equivariant surjection $\bQ_p[\pi_1^{\pro-\alg}(Y_{\oK},y)]^{G_K-\fin}\twoheadrightarrow\bQ_p[\pi_1^{\pro-\alg}(X_{\oK},x)]^{G_K-\fin}$. 
\end{lm}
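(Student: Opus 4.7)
The plan is to apply Lemma \ref{pro-alg: finite index surjection} to the inclusion $\pi_1^{\et}(X_{\oK},x)\subset\pi_1^{\et}(Y_{\oK},y)$ and then upgrade the resulting surjection to its $G_K$-finite subspaces. Since the morphism $f:X\to Y$ underlying the cover is a finite étale morphism defined over $K$, the induced inclusion of fundamental groups is $G_K$-equivariant and realizes $\pi_1^{\et}(X_{\oK},x)$ as an open subgroup of finite index $d=\deg f$. Lemma \ref{pro-alg: finite index surjection} then furnishes a $G_K$-equivariant surjection $r:\bQ_p[\pi_1^{\pro-\alg}(Y_{\oK},y)]\twoheadrightarrow\bQ_p[\pi_1^{\pro-\alg}(X_{\oK},x)]$, which by equivariance carries $G_K$-finite functions into $G_K$-finite functions. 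The remaining task is to verify that $r$ is surjective on the $G_K$-finite subspaces.

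For this I would use Lemma \ref{pro-alg: functions as matrix coeffs}. Given $f_0\in\bQ_p[\pi_1^{\pro-\alg}(X_{\oK},x)]^{G_K-\fin}$, that lemma provides an arithmetic local system $\bL$ on $X$ — a continuous finite-dimensional representation of $\pi_1^{\et}(X,x)$ on $\bL_x$ — such that $f_0\in\cF(\bL)^{\vee}$. The pushforward $\tilde\bL:=f_*\bL$ is then an arithmetic local system on $Y$ whose fiber at $y$ is the induced representation $\tilde\bL_y=\Ind_{\pi_1^{\et}(X,x)}^{\pi_1^{\et}(Y,y)}\bL_x$. The key claim is that every matrix coefficient $g\mapsto\phi(gv)$ of $\bL$, with $v\in\bL_x$ and $\phi\in\bL_x^{\vee}$, arises as the restriction from $\pi_1^{\et}(Y_{\oK},y)$ of a matrix coefficient of $\tilde\bL$: namely, one takes the vector $u:=1\otimes v\in\tilde\bL_y$ together with the functional $\psi:=\phi\circ\mathrm{pr}\in\tilde\bL_y^{\vee}$, where $\mathrm{pr}:\tilde\bL_y\twoheadrightarrow\bL_x$ is the canonical $\pi_1^{\et}(X,x)$-equivariant projection retracting the unit $\bL_x\hookrightarrow\tilde\bL_y$ of the induction-restriction adjunction (this projection is well-defined because $\Ind=\mathrm{Coind}$ for finite-index subgroups). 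Since $\tilde\bL$ is arithmetic, Lemma \ref{pro-alg: functions as matrix coeffs} guarantees $\cF(\tilde\bL)^{\vee}\subset\bQ_p[\pi_1^{\pro-\alg}(Y_{\oK},y)]^{G_K-\fin}$, and so $f_0$ is exhibited as the restriction of a $G_K$-finite function on $Y$.

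The one delicate point is verifying the explicit restriction formula for $\psi(g\cdot u)$, which is a direct computation with coset representatives $1=g_1,\dots,g_d$ for $\pi_1^{\et}(X,x)$ inside $\pi_1^{\et}(Y,y)$: writing $g=g_i h$ with $h\in\pi_1^{\et}(X,x)$, one has $g\cdot u=g_i\otimes hv$, so $\psi(g\cdot u)=\phi(hv)$ when $i=1$ and vanishes otherwise; the identification $\pi_1^{\et}(X_{\oK},x)=\pi_1^{\et}(X,x)\cap\pi_1^{\et}(Y_{\oK},y)$ then shows that the restriction of $\psi(g\cdot u)$ to the geometric subgroup $\pi_1^{\et}(X_{\oK},x)$ agrees with $\phi(gv)$, as required. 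I foresee no conceptual obstacle beyond this piece of bookkeeping.
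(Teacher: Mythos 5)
Your proof is correct and follows the route the paper intends (the paper states this lemma without proof, as a consequence of Lemmas \ref{pro-alg: finite index surjection} and \ref{pro-alg: galois finite}): you correctly isolate the one real issue, namely that the coset-extension of Lemma \ref{pro-alg: finite index surjection} need not preserve $G_K$-finiteness, and your induced-representation computation with $u=1\otimes v$, $\psi=\phi\circ\mathrm{pr}$ is exactly the equivariant replacement for it, packaging Lemma \ref{pro-alg: galois finite} via the matrix-coefficient description of Lemma \ref{pro-alg: functions as matrix coeffs}. No gaps.
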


\section{Belyi's theorem and its immediate consequences}

The driving force of all our arguments is the following surprising theorem of Belyi's.

\begin{thm}\label{belyi main}
For a smooth proper geometrically connected curve $C$ over a number field $F$ and a finite set of closed points $S\subset |C|$ there exists a finite morphism $f:C\to\bP^1_F$ such that $f$ is \'etale over $\PF$ and $f(S)\subset \{0,1,\infty\}$. 
\end{thm}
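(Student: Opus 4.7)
The plan is to follow Belyi's classical two-step strategy, adapted to the number field $F$. I would start by picking any non-constant $F$-morphism $g_0 : C \to \bP^1_F$ (which exists by Riemann-Roch applied to a line bundle of sufficiently large degree on $C$) and then construct a morphism $q : \bP^1_F \to \bP^1_F$ so that the composition $f = q \circ g_0$ is the desired Belyi map. Since the branch locus of a composition satisfies $B(q \circ g_0) \subset B(q) \cup q(B(g_0))$, it suffices to arrange $B(q) \subset \{0,1,\infty\}$ and $q(T_0) \subset \{0,1,\infty\}$, where $T_0 = g_0(S) \cup B(g_0) \cup \{0,1,\infty\}$ is the initial ``bad set,'' viewed as a finite union of closed points of $\bP^1_F$. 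The map $q$ will be built as $q = p \circ r$ in two stages.

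The first stage constructs $r : \bP^1_F \to \bP^1_F$ reducing the bad set to lie inside $\bP^1(\bQ)$, via repeated application of Belyi's key reduction lemma. If a bad set $T$ contains a closed point $\alpha$ of maximal degree $n \geq 2$ over $\bQ$, let $m(x) \in \bQ[x]$ be the minimal polynomial of $\alpha$. Viewing $m$ as a morphism $\bP^1_F \to \bP^1_F$, it sends $\alpha$ and all of its $G_{\bQ}$-conjugates to $0$, while any other $\beta \in T$ of $\bQ$-degree $d$ is sent to $m(\beta) \in \bQ(\beta)$, which has $\bQ$-degree at most $d$. The branch locus of $m$ is $\{\infty\} \cup m(\{m' = 0\})$, and since $\deg m' = n - 1$, every new branch point has $\bQ$-degree at most $n-1$. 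Replacing $T$ by $m(T) \cup B(m)$ therefore strictly decreases the lexicographic invariant $(\max \bQ\text{-degree of }T,\ \#\text{Galois orbits of }T\text{ achieving the max})$, and finitely many iterations produce $r$.

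In the second stage, $T_1 := r(T_0) \cup B(r) \subset \bP^1(\bQ)$, and Belyi's explicit polynomial trick reduces $T_1$ to $\{0,1,\infty\}$. After an $F$-rational Möbius adjustment we may assume $\{0,1,\infty\} \subset T_1$; for any remaining $\lambda \in T_1 \setminus \{0,1,\infty\}$ we use a further Möbius transformation over $\bQ$ to move $\lambda$ into $\bQ \cap (0,1)$, write $\lambda = a/(a+b)$ in lowest terms with positive integers $a, b$, and set
\[
p_\lambda(x) = \frac{(a+b)^{a+b}}{a^a b^b}\, x^a (1-x)^b.
\]
A direct calculation using $p_\lambda'(x) = c \cdot x^{a-1}(1-x)^{b-1}(a - (a+b)x)$ shows that $p_\lambda$ has critical points exactly $\{0, 1, \lambda\}$ with critical values in $\{0, 1\}$, sends $\infty \mapsto \infty$, and therefore maps $\{0, 1, \infty, \lambda\}$ into $\{0, 1, \infty\}$ with branch locus contained in $\{0, 1, \infty\}$. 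Consequently composing with $p_\lambda$ strictly decreases $|T_1 \setminus \{0, 1, \infty\}|$, and iterating over all remaining bad points yields $p$; the composition $f = p \circ r \circ g_0$ is the required finite morphism.

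The main point requiring care is the termination of the first-stage induction, since multiple $G_{\bQ}$-orbits of degree $n$ may coexist and only one is annihilated per step—this is why the invariant must be a lexicographic pair rather than the maximum degree alone. The only other computation to check is the explicit formula for the critical values of $p_\lambda$, which is a one-line derivative calculation.
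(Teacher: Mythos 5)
The paper does not reprove this statement---it cites Belyi's original argument (and Serre's exposition) and observes that the strengthened form with the auxiliary set $S$ is exactly what that argument yields; your write-up is a correct reproduction of that same two-stage argument, including the lexicographic invariant $(\max \bQ\text{-degree},\ \#\text{orbits achieving it})$ that is genuinely needed for termination of the degree-lowering stage. The only detail worth making explicit is that the M\"obius transformations used in the second stage to move a rational bad point $\lambda$ into $(0,1)$ must be taken from the group permuting $\{0,1,\infty\}$ (generated by $z\mapsto 1-z$ and $z\mapsto 1/z$), so that the already-processed points $0,1,\infty$ are not pushed back out of the good set and the count $|T_1\setminus\{0,1,\infty\}|$ really does decrease.
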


This statement is stronger than \cite[Theorem 4]{belyi} but this is what Belyi's proof actually shows, see also \cite[Theorem 5.4.B]{serremordell}. We will often use the theorem paraphrased in the following way:

\begin{cor}\label{belyi open}
For any smooth, possibly non-proper, curve $U$ over $F$ there exists a dense open subscheme $U'\subset U$ together with a finite \'etale map $U'\to\PF$.
\end{cor}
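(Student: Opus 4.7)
The plan is to reduce to Theorem \ref{belyi main} applied to a smooth compactification of $U$, with the boundary divisor $\overline{U}\setminus U$ playing the role of the distinguished set $S$.

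First, it suffices to treat the case when $U$ is geometrically connected over $F$. If $U$ is a disjoint union of connected components, each component can be handled separately and the resulting etale covers assembled into a single finite etale map from a disjoint union. If $U$ is connected but not geometrically connected, let $F'$ be the algebraic closure of $F$ inside the function field of $U$; then $U$ is a smooth geometrically connected curve over $F'$, and since the projection $\bP^1_{F'}\setminus\{0,1,\infty\}\to\bP^1_{F}\setminus\{0,1,\infty\}$ is itself finite etale (it is the base change of the finite etale map $\Spec F'\to\Spec F$, and it evidently sends the three rational points to the three rational points), we may post-compose any etale cover $U'\to\bP^1_{F'}\setminus\{0,1,\infty\}$ with it to obtain the required map over $F$.

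Assume now that $U$ is smooth and geometrically connected over $F$, and let $C$ be its smooth projective completion, which is a smooth proper geometrically connected curve over $F$. Let $S:=|C\setminus U|$, a finite set of closed points of $C$. Applying Theorem \ref{belyi main} to the pair $(C, S)$ yields a finite morphism $f:C\to\bP^1_F$ which is etale over $\PF$ and satisfies $f(S)\subset\{0,1,\infty\}$. Set $U':=f^{-1}(\PF)\subset C$. The condition $f(S)\subset\{0,1,\infty\}$ forces $S\cap U'=\emptyset$, so $U'\subset C\setminus S=U$; being a nonempty open subset of the irreducible curve $U$, it is dense; and $f|_{U'}:U'\to\PF$ is finite etale by construction.

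There is no substantive obstacle here: the corollary is essentially just the strengthening of Belyi's statement (allowing $U$ to be non-proper to begin with) packaged as the observation that the preimage of $\PF$ under a Belyi cover of the compactification lands automatically inside $U$ once we arrange $S$ to go to $\{0,1,\infty\}$.
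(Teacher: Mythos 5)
Your proof is correct and is exactly the deduction the paper intends: the corollary is stated as an immediate paraphrase of Theorem \ref{belyi main}, obtained by applying that theorem to the smooth compactification $C$ of $U$ with $S=|C\setminus U|$ and taking $U'=f^{-1}(\PF)$. Your extra care with the non--geometrically-connected case (passing to the field of constants $F'$ and composing with the finite etale projection $\bP^1_{F'}\setminus\{0,1,\infty\}\to\PF$) is a sound way to handle a case the paper leaves implicit.
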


We will use this result as a black box, except for the proof of Lemma \ref{tensor: telescopic subgroup} which will require us to write down an explicit \'etale cover of $\PF$, using Belyi's proof idea. The following result is an instance of Belyi's theorem implying a universality statement for the Galois action on $\bQ_p[\pi_1^{\pro-\alg}(\PoF,0_v)]$.

\begin{pr}\label{belyi: any pi1}
Suppose that $X$ is a normal quasi-projective scheme of finite type over a number field $F$ and $x\in X(F)$ is a base point lying in the smooth locus of $X$. Any finite-dimensional subquotient of $\bQ_p[\pi_1^{\pro-\alg}(X_{\oF},x)]^{G_F-\mathrm{fin}}$ can be established as a subquotient of  $\bQ_p[\pi_1^{\pro-\alg}(\PoF,0_v)]^{G_F-\fin}$, for any choice of a tangential base point $0_v$ supported at $0$.
\end{pr}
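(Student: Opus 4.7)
\emph{Step 1: reduce to matrix coefficients.} By Lemma~\ref{pro-alg: functions as matrix coeffs}, every finite-dimensional $G_F$-stable subquotient of $\bQ_p[\pi_1^{\pro-\alg}(X_{\oF},x)]^{G_F-\fin}$ sits inside $\cF(\bL)^{\vee}$ for some $\bQ_p$-local system $\bL$ on $X$. So I fix such an $\bL$ and aim to realize $\cF(\bL)^{\vee}$ itself as a $G_F$-subquotient of $\bQ_p[\pi_1^{\pro-\alg}(\PoF,0_v)]^{G_F-\fin}$; this will suffice.

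\emph{Step 2: cut out a curve adapted to $\bL$.} Applying a Bertini/Lefschetz-type theorem for $\pi_1^{\et}$ to a normal projective compactification of $X$, I look for a smooth geometrically connected curve $\iota:C\hookrightarrow X^{\sm}$ through $x$ whose geometric fundamental group captures the monodromy of $\bL$, i.e.\ such that $\cF(\iota^*\bL)=\cF(\bL)$ inside $\End\bL_x$. Concretely, taking $C$ to be a very general complete intersection through $x$ in a projective compactification of $X$ and then restricting to $X^{\sm}$ makes the composition $\pi_1^{\et}(C_{\oF},x)\to\pi_1^{\et}(X_{\oF},x)\to GL(\bL_x)$ have Zariski-dense image in the monodromy group of $\bL$. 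The resulting $G_F$-equivariant map $\pi_1^{\et}(C_{\oF},x)\to\pi_1^{\et}(X_{\oF},x)$ gives a $G_F$-equivariant inclusion $\cF(\bL)^{\vee}\hookrightarrow \bQ_p[\pi_1^{\pro-\alg}(C_{\oF},x)]^{G_F-\fin}$.

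\emph{Step 3: apply Belyi and descend from $\PF$.} By Corollary~\ref{belyi open}, a dense open $C'\subset C$ admits a finite etale map $f:C'\to \PF$; the inclusion $C'\hookrightarrow C$ keeps $\cF(\bL)^{\vee}$ inside $\bQ_p[\pi_1^{\pro-\alg}(C'_{\oF})]^{G_F-\fin}$, since $\pi_1^{\et}(C'_{\oF})\twoheadrightarrow \pi_1^{\et}(C_{\oF})$. Using the stronger form of Belyi (Theorem~\ref{belyi main}), I moreover arrange that some missing point $b\in \overline{C}\setminus C'$ is sent by $f$ to $0\in\bP^1_F$, and I pick a tangential base point $b_v$ at $b$ lying above $0_v$ so that $f:(C',b_v)\to(\PF,0_v)$ is a morphism of pointed schemes. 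Lemma~\ref{pro-alg: fet surjection} then gives a $G_F$-equivariant surjection
$$\bQ_p[\pi_1^{\pro-\alg}(\PoF,0_v)]^{G_F-\fin}\twoheadrightarrow \bQ_p[\pi_1^{\pro-\alg}(C'_{\oF},b_v)]^{G_F-\fin},$$
inside whose target the image of $\cF(\bL)^{\vee}$ --- transported from the classical base point $x$ to the tangential base point $b_v$ along an etale path --- still appears as a $G_F$-subrepresentation. (The change of base point modifies the $G_F$-action on $\pi_1^{\pro-\alg}(C'_{\oF})$ by an inner automorphism, which preserves isomorphism classes of finite-dimensional $G_F$-subquotients of the regular function ring.) Combining, $\cF(\bL)^{\vee}$ is a $G_F$-subquotient of $\bQ_p[\pi_1^{\pro-\alg}(\PoF,0_v)]^{G_F-\fin}$, as desired.

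\emph{Main obstacle.} The delicate ingredient is Step~2: one must produce a smooth curve inside the normal quasi-projective $X$ whose geometric fundamental group captures all the $\bL$-monodromy, while passing through the prescribed rational point $x$. For smooth projective $X$ of dimension $\geq 2$ this follows from the classical Lefschetz hyperplane theorem for $\pi_1^{\et}$ (SGA~2), but in the normal quasi-projective setting one has to compactify $X$ to a possibly singular projective variety, take a very general complete intersection through $x$, and check that after intersecting with $X^{\sm}$ the induced map of geometric fundamental groups really does have Zariski-dense image inside the monodromy group of $\bL$, with no loss at the boundary or at the singular locus of the compactification. Matching of base points in Step~3 is a secondary but routine issue, handled by the inner-automorphism remark above.
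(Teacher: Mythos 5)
Your Steps 1--2 track the paper's own reduction: the paper first passes to $X^{\sm}$ and then invokes the Lefschetz theorem for possibly non-proper varieties of Esnault--Kindler to reduce to a curve through $x$ with a surjection (not merely Zariski-dense image) on geometric fundamental groups, which is what the citation actually delivers; so the issue you flag as the ``main obstacle'' is the part that is already covered by the literature. The genuine gap is in Step 3, in the parenthetical ``inner-automorphism remark'' that you dismiss as routine. Changing the base point from the classical point $x$ to a tangential point $b_v$ identifies the geometric fundamental groups via an \'etale path $\gamma$, but this identification is \emph{not} $G_F$-equivariant: the two $G_F$-actions differ by the cocycle $\sigma\mapsto\gamma^{-1}\sigma(\gamma)$, and this twist genuinely changes the isomorphism classes of the finite-dimensional $G_F$-subquotients of the function ring. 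The paper itself documents this failure: by Corollary \ref{weil numbers over number field}, subrepresentations of $\bQ_p[\pi_1^{\pro-\alg}(X_{\oF},x)]$ for a classical $x$ have only non-negative Frobenius weights, whereas the proof of Lemma \ref{tensor: any character} exhibits $\bQ_p(1)$ (weight $-2$) inside $\cF(\bL)^{\vee}$ for the Legendre family at a tangential base point. So for one and the same local system, $\cF(\bL)$ computed at $x$ and at a tangential point are non-isomorphic $G_F$-representations, and your transport of $\cF(\bL)^{\vee}$ ``along an \'etale path'' does not produce a $G_F$-subrepresentation of $\bQ_p[\pi_1^{\pro-\alg}(C'_{\oF},b_v)]$. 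This is also exactly why the paper must work with the class $\cC_F$ of representations appearing for \emph{every} tangential base point and why it remarks that the proposition is not known for tangential base points on $X$.

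The paper's proof avoids this by never changing base points non-equivariantly. Using the strong form of Belyi's theorem it arranges $f(x)=0$, so the relevant tangential base point $x_w$ above $0_v$ is supported at $x$ itself, and the comparison of $\pi_1^{\et}(X_{\oF}\setminus f^{-1}\{0,1,\infty\},x_w)$ with $\pi_1^{\et}(X_{\oF},x)$ is the canonical $G_F$-equivariant surjection of Lemma \ref{tangential: vs classical}(i). When $f$ is ramified at $x$ no $F$-rational $x_w$ with $f_*(x_w)=0_v$ need exist (Lemma \ref{tangential: vs classical}(iii) only produces one over a Kummer extension), and the paper then replaces $\bL$ by the pushforward $\bL'=f_*(\bL|_{f^{-1}(\PF)})$, a local system on $\PF$ whose stalk at $0_v$ contains $\bL_x$ as an $H$-stable subspace, and realizes $\cF(\bL)$ as a subquotient of $\cF(\bL')$ via the space $W$ of endomorphisms preserving $\bL_x\subset\bL'_{0_v}$. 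Some such device is needed to repair your Step 3; as written, the step fails.
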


\begin{proof}
We may freely replace the scheme $X$ by another pointed scheme $X', x'\in X'(F)$ admitting a map $f:X'\to X$ that induces a surjection $\pi_1^{\et}(X'_{\oF},\ox')\to \pi_1^{\et}(X_{\oF},\ox)$, as the induced map $\bQ_p[\pi_1^{\pro-\alg}(X_{\oF},\ox)]\to \bQ_p[\pi_1^{\pro-\alg}(X'_{\oF},\ox')]$ is a $G_F$-equivariant embedding. We will use this observation to reduce to the case $\dim X=1$.

The embedding $X^{\sm}\subset X$ of the maximal open smooth subscheme induces a surjection $\pi_1^{\et}(X^{\sm}_{\oF},\ox)\twoheadrightarrow \pi_1^{\et}(X_{\oF},\ox)$ by \cite[Proposition V.8.2]{SGA1}, so we may assume that $X$ is smooth. By the Lefschetz hyperplane theorem for not necessarily proper varieties \cite[Theorem 1.1]{esnault-kindler} we may further assume that $X$ is a (possibly non-proper) curve.

Using Theorem \ref{belyi main} we choose a quasi-finite map $f:X\to \bP^1_F$ that is finite \'etale over $\PF$ and sends $x$ to $0$. Now let $v\in T_0\bP^1_F$ be a non-zero tangent vector for which we want to prove the claim. If $f$ is ramified at $x$, it is not necessarily possible to choose an $F$-rational tangential base point for $X\setminus f^{-1}(\{0,1,\infty\})$ based at $x\in f^{-1}(0)$ that would map to $0_v$ under $f$. If there happens to exist an  $F$-base point $x_w$ such that $f_*(x_w)=0_v$, we can conclude the proof by noticing that Lemma \ref{pro-alg: fet surjection} yields a surjection $\bQ_p[\pi_1^{\pro-\alg}(\PoF,0_v)]^{G_F-\fin}\twoheadrightarrow\bQ_p[\pi_1^{\pro-\alg}(X_{\oF}\setminus f^{-1}\{0,1,\infty\},x_w)]^{G_F-\fin}$, while $\bQ_p[\pi_1^{\pro-\alg}(X_{\oF},x)]^{G_F-\fin}$ embeds into $\bQ_p[\pi_1^{\pro-\alg}(X_{\oF}\setminus f^{-1}\{0,1,\infty\},x_w)]^{G_F-\fin}$.

In general, we can choose such base point $x_{w}$ over $\oF$ and consider the open subgroup $H:=\pi_1^{\et}(X_{\oF}\setminus f^{-1}(\{0,1,\infty\}),x_w)\subset \pi_1^{\et}(\PoF,0_v)$. By Lemma \ref{pro-alg: functions as matrix coeffs}, it is enough to prove that for any local system $\bL$ on $X$ the representation $\cF(\bL)^{\vee}$ (defined with respect to the base point $x$) is a subquotient of $\bQ_p[\pi_1^{\pro-\alg}(\PoF,0_v)]^{G_F-\fin}$. Consider the pushforward $\bL':=f_*(\bL|_{f^{-1}(\PF)})$ which is a local system of rank $\deg f\cdot \rk \bL$ on $\PF$.

The stalk $\bL_x$ embeds canonically into the stalk $\bL'_{0_v}$ and, under the action of $\pi_1^{\et}(\PoF,0_v)$ on $\bL'_{0_v}$, the subgroup $H$ preserves this subspace $\bL_x\subset \bL'_{0_v}$. Moreover, the action of $H$ on $\bL_x$ factors through $H\twoheadrightarrow \pi_1^{\et}(X_{\oF},x)$ with $\pi_1^{\et}(X_{\oF},x)$ acting on $\bL_x$ via the geometric monodromy of the local system $\bL$. 

Let $W\subset\End(\bL'_{0_v})$ be the subspace of operators $A$ that satisfy $A(\bL_x)\subset \bL_x$. The previous paragraph demonstrates that the image of $H$ in $\End(\bL'_{0_v})$ is contained in $W$ and its image under the natural map $W\to \End(\bL_x)$ is equal to $\cF(\bL)$. Therefore $\cF(\bL)$ is a subquotient of $\cF(\bL')$ (the latter defined with respect to the base point $0_v$) and we are done.
\end{proof}

Our proof of the main theorem will require to work simultaneously with all tangential base points supported at $0$ (there is $F^{\times}$ worth of those). Recall the following set of isomorphism classes of finite-dimensional $\bQ_p$-representations of $G_F$ that was mentioned in the introduction

\begin{equation}
\cC_F:=\{V\mid V\text{ appears as a subquotient of }\bQ_p[\pi_1^{\pro-\alg}(\PoF,0_v)]^{G_F-\fin}\text{ for every }v\}
\end{equation}

\begin{cor}\label{belyi: any H1}
For any normal quasi-projective scheme $X$ over $F$ that admits an $F$-rational base point the representation $H^1_{\et}(X_{\oF},\bQ_p)$ belongs to $\cC_F$.
\end{cor}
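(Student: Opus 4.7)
The plan is to realize $H^1_{\et}(X_{\oF}, \bQ_p)$ as a $G_F$-subrepresentation of $\bQ_p[\pi_1^{\pro-\alg}(X_{\oF}, x)]^{G_F-\fin}$ and then invoke Proposition \ref{belyi: any pi1} to conclude.

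The core construction uses the identification $H^1_{\et}(X_{\oF}, \bQ_p) = \Hom_{\cont}(\pi_1^{\et}(X_{\oF}, x), \bQ_p)$, valid for any connected normal scheme. To each continuous homomorphism $\phi$ I would associate the two-dimensional unipotent representation $\rho_\phi: \pi_1^{\et}(X_{\oF}, x) \to GL_2(\bQ_p)$ defined by
\[
\rho_\phi(\gamma) = \begin{pmatrix} 1 & \phi(\gamma) \\ 0 & 1 \end{pmatrix}.
\]
Since $\phi$ is itself the off-diagonal matrix coefficient of $\rho_\phi$, Lemma \ref{pro-alg: matrix coefficients} places $\phi$ inside $\bQ_p[\pi_1^{\pro-\alg}(X_{\oF}, x)]$; equivalently, one verifies directly that the translates $\gamma \cdot \phi = \phi - \phi(\gamma)$ span the two-dimensional subspace $\bQ_p \phi \oplus \bQ_p \cdot 1$, so $\phi$ is $\pi_1$-finite. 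The assignment $\phi \mapsto \phi$ is plainly injective and $G_F$-equivariant (both sides carry the pullback action along the $G_F$-action on $\pi_1^{\et}(X_{\oF}, x)$), and because $H^1_{\et}(X_{\oF}, \bQ_p)$ is finite-dimensional as a $G_F$-module, its image automatically lies in the $G_F$-finite part.

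Before invoking Proposition \ref{belyi: any pi1}, which requires $x$ to lie in the smooth locus of $X$, I would first reduce to the case of a curve. Using the Lefschetz hyperplane theorem for non-proper varieties as in \cite{esnault-kindler}, one finds a normal curve $C \subset X$ passing through $x$ with $\pi_1^{\et}(C_{\oF}, x) \twoheadrightarrow \pi_1^{\et}(X_{\oF}, x)$, which induces an injection $H^1_{\et}(X_{\oF}, \bQ_p) \hookrightarrow H^1_{\et}(C_{\oF}, \bQ_p)$ of $G_F$-modules. A normal curve over a field of characteristic zero is smooth, so $x \in C^{\sm}$, and the result for $C$, combined with the evident closure of $\cC_F$ under subrepresentations, yields the result for $X$. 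The argument has no substantive obstacle: the conceptual content is simply that $H^1$-classes are matrix coefficients of natural two-dimensional unipotent representations, and the smoothness of the base point is handled by routine reduction to curves.
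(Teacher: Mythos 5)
Your argument is essentially the paper's: both realize $H^1_{\et}(X_{\oF},\bQ_p)=\Hom_{\cont}(\pi_1^{\et}(X_{\oF},x),\bQ_p)$ inside $\bQ_p[\pi_1^{\pro-\alg}(X_{\oF},x)]^{G_F-\fin}$ (the paper phrases this as linear functions on the vector group quotient $\underline{H^1_{\et}(X_{\oF},\bQ_p)^{\vee}}$ of $\pi_1^{\pro-\alg}$, you as matrix coefficients of the associated $2\times 2$ unipotent representations — the same computation) and then apply Proposition \ref{belyi: any pi1}. Your preliminary Lefschetz reduction to a curve is a harmless extra precaution about the base point lying in the smooth locus, a reduction that the proof of Proposition \ref{belyi: any pi1} already performs internally.
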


\begin{proof}
This follows from Proposition \ref{belyi: any pi1} because the canonical map $\pi_1^{\et}(X_{\oF},x)\to H^1_{\et}(X_{\oF},\bQ_p)^{\vee}$ extends to a surjective map from $\pi_1^{\pro-\alg}(X_{\oF},x)$ to the vector group $\underline{H^1_{\et}(X_{\oF},\bQ_p)^{\vee}}$ and the space $H^1_{\et}(X_{\oF},\bQ_p)$ is $G_F$-equivariantly identified with the space of linear functions on that vector group.
\end{proof}

We do not know if the analog of Proposition \ref{belyi: any pi1} is true for an $X$ equipped with a tangential base point $x$, so there potentially might be representations appearing in $\Apio^{G_F-\fin}$ for some, but not all $v$, hence the necessity to work with the class $\cC_F$.

\section{Direct sum and tensor product}

In this section, we show that the class $\cC_F$ is stable under direct sums and tensor products. In particular, any representation from $\cC_F$ appears in $\bQ_p[\pi_1^{\pro-\alg}(\PoF,0_v)]$ with infinite multiplicity. It is important for the argument that we are working with Galois representation simultaneously appearing in $\Apio$ for all choices of the tangential base point at $0$.

For a future application we will prove a slightly stronger statement that allows some freedom in choosing base points:

\begin{pr}\label{tensor: tensor product}
For a tangential base point $0_v$ supported at $0$ there exist two other tangential base points $0_{v_1}$ and $0_{v_2}$ such that, if $V_1$ and $V_2$ are representations of $G_F$ with $V_i$ appearing as subquotients of $\bQ_p[\pi_1^{\pro-\alg}(\PoF,0_{v_i})]^{G_F-\fin}$ for $i=1,2$, then $V_1\otimes V_2$ and $V_1\oplus V_2$ are subquotients of $\Apio^{G_F-\fin}$.

In particular, the class of representations $\cC_F$ is stable under direct sums and tensor products.
\end{pr}

The key to the proof is the following telescopic property of the fundamental group of $\PF$:

\begin{lm}\label{tensor: telescopic subgroup}
There exists an open subgroup $\Gamma\subset\pi_1^{\et}(\PoF,0_v)$ stable under $G_F$ and admitting a $G_F$-equivariant surjection $\Gamma\twoheadrightarrow \pi_1^{\et}(\PoF,0_{v_1})\times\pi_1^{\et}(\PoF,0_{v_2})$ for some tangential base points $0_{v_1},0_{v_2}$ at $0$.
\end{lm}

\begin{proof}
Consider the degree $3$ finite morphism $f:\bP^1_F\to \bP^1_F$ given by $f(z)=\frac{27}{4}z(z-1)^2$. We have $f'(z)=\frac{27}{4}(3z-1)(z-1)$ so the only ramification points of $f$ are $\frac{1}{3}, 1$ and $\infty$. Since $f(1)=0, f(\frac{1}{3})=1,f(\infty)=\infty$ the map $f$ restricts to a finite \'etale cover $\bP^1_{F}\setminus\{0,\frac{1}{3},1,\frac{4}{3},\infty\}\to \PF$. Moreover, since $f$ is unramified at $0$, we may choose a tangential  base point $0_{v_1}$ for $\bP^1_{F}\setminus\{0,\frac{1}{3},1,\frac{4}{3},\infty\}$ such that $f(0_{v_1})=0_v$. 

Define $\Gamma$ as the subgroup $f_*(\pi_1^{\et}(\bP^1_F\setminus\{0,\frac{1}{3},1,\frac{4}{3},\infty\},0_{v_1}))\subset \pi_1^{\et}(\PoF,0_v)$. The inclusion maps $\iota_1:\bP^1_F\setminus\{0,\frac{1}{3},1,\frac{4}{3},\infty\}\to\bP^1_F\setminus\{0,1,\infty\}$, $\iota_2:\bP^1_F\setminus\{0,\frac{1}{3},1,\frac{4}{3},\infty\}\to\bP^1_F\setminus\{0,\frac{1}{3},\frac{4}{3}\}$ induce a $G_F$-equivariant homomorphism $\iota_{1*}\times\iota_{2*}:\Gamma\to \pi_1^{\et}(\PoF,0_{v_1})\times \pi_1^{\et}(\mathbb{P}^1_{\oF}\setminus\{0,\frac{1}{3},\frac{4}{3}\},0_{v_1})$. This homomorphism is surjective: it suffices to check that on corresponding topological fundamental groups as profinite completion preserves surjections. The topological space $\bP^1(\bC)\setminus\{0,\frac{1}{3},1,\frac{4}{3},\infty\}$ is homotopy equivalent to $\bigvee\limits_{i=1}^4 S^1$ in such a way that $\iota_1$ and $\iota_2$ are homotopic to the maps contracting the first two or the last two circles in the bouquet, respectively, which implies the surjectivity.

The map $\iota_{1*}\times\iota_{2*}$ is the desired surjection because $\pi_1^{\et}(\mathbb{P}^1_{\oF}\setminus\{0,\frac{1}{3},\frac{4}{3}\},0_{v_1})$ can be identified via an automorphism of $\bP^1_F$ with $\pi_1^{\et}(\PoF,0_{v_2})$ for some tangential base point $0_{v_2}$. 
\end{proof}

\begin{proof}[Proof of Proposition \ref{tensor: tensor product}] For a base point $0_v$ choose the base points $0_{v_1}, 0_{v_2}$ as directed by Lemma \ref{tensor: telescopic subgroup}. We will start by showing that $V_1\otimes V_2$ is a subquotient of $\Apio^{G_F-\fin}$. The representation $V_1\otimes V_2$ is a subquotient of the following space, where $\Gamma$ is provided by Lemma \ref{tensor: telescopic subgroup}: $$\bQ_p[\pi_1^{\pro-\alg}(\PoF,0_{v_1})]^{G_F-\fin}\otimes\bQ_p[\pi_1^{\pro-\alg}(\PoF,0_{v_2})]^{G_F-\fin}\subset \bQ_p[\Gamma_{\bQ_p}^{\pro-\alg}]^{G_F-\fin}$$ The space $\bQ_p[\Gamma_{\bQ_p}^{\pro-\alg}]^{G_F-\fin}$, in turn, is a quotient of $\Apio^{G_F-\fin}$ by Lemma \ref{pro-alg: fet surjection} so $V_1\otimes V_2$ is a subquotient of $\Apio^{G_F-\fin}$.

Since the representation $V_1\oplus V_2$ is a direct summand of the tensor product $(V_1\oplus \bQ_p)\otimes (V_2\oplus\bQ_p)$, to show that $V_1\oplus V_2$ is a subquotient of $\Apio^{G_F-\fin}$ it suffices to prove that for any tangential point $0_w$ supported at $0$, if a representation $V$ is a subquotient of $\algfun{\PoF,0_w}$ then so is $V\oplus\bQ_p$. This amounts to showing that the trivial representation $\bQ_p^n$ of any dimension $n$ is a subquotient of $\algfun{\PoF,0_w}$.

We can explicitly embed the two-dimensional trivial representation into $\bQ_p[\pi_1^{\et}(\PoF,0_w)]$: the $G_F$-equivariant surjection $\pi_1^{\et}(\PoF,0_w)\to\pi_1^{\et}(\bP^1_{\oF}\setminus\{0,\infty\},0_w)\simeq \widehat{\bZ}(1)\twoheadrightarrow \bZ/2$ induces an embedding $\bQ_p^2\simeq \bQ_p[\bZ/2]\hookrightarrow\bQ_p[\pi_1^{\et}(\PoF,0_w)]$. As we have already proven the first assertion of Proposition \ref{tensor: tensor product} about tensor products, it follows that, as desired, $(\bQ_p^2)^{\otimes n}=\bQ_p^{2^n}$ for every $n$ is a subquotient of $\algfun{\PoF,0_w}$ for every tangential base points $0_w$ supported at $0$. This finishes the proof that if $V_i$ is a subquotient of $\algfun{\PoF,0_{v_i}}$ for $i=1,2$, then $V_1\oplus V_2$ is a subquotient of $\algfun{\PoF,0_v}$. 
\end{proof}

\begin{cor}\label{tensor: cyclotomic character}
$\bQ_p(-1)\in\cC_F$.
\end{cor}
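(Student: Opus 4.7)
The plan is to apply Corollary \ref{belyi: any H1} to the scheme $X = \bG_{m,F} = \bP^1_F\setminus\{0,\infty\}$ equipped with its $F$-rational base point $1$. Since $\bG_{m,F}$ is smooth, normal, and quasi-projective, that corollary directly gives $H^1_{\et}(\bG_{m,\oF},\bQ_p)\in\cC_F$. It only remains to identify the $G_F$-module $H^1_{\et}(\bG_{m,\oF},\bQ_p)$ with $\bQ_p(-1)$, which is standard Kummer theory: the $n$-th power Kummer sequences $1\to\mu_n\to\bG_m\xrightarrow{n}\bG_m\to 1$ give $\pi_1^{\et}(\bG_{m,\oF},1)\cong\widehat{\bZ}(1)$ as $G_F$-modules, whence $H^1_{\et}(\bG_{m,\oF},\bQ_p)\cong\Hom_{\cont}(\widehat{\bZ}(1),\bQ_p)\cong\bQ_p(-1)$.

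Equivalently, and somewhat more transparently, one can bypass Corollary \ref{belyi: any H1} and argue directly. For any tangential base point $0_v$ supported at $0$, the open immersion $\PF\hookrightarrow\bG_{m,F}$ induces a $G_F$-equivariant surjection $\pi_1^{\et}(\PoF,0_v)\twoheadrightarrow\pi_1^{\et}(\bG_{m,\oF},0_v)\cong\widehat{\bZ}(1)$. Passing to pro-algebraic completions over $\bQ_p$ and projecting onto the vector-group quotient of the pro-algebraic completion of $\widehat{\bZ}(1)$ (cf.\ Example \ref{pro-alg:cyclic example}) yields a $G_F$-equivariant surjection of $\pi_1^{\pro-\alg}(\PoF,0_v)$ onto the vector group $\underline{\bQ_p(1)}$; pulling back linear functions along this surjection produces a $G_F$-stable copy of $\bQ_p(-1)$ inside $\bQ_p[\pi_1^{\pro-\alg}(\PoF,0_v)]^{G_F-\fin}$ uniformly in $0_v$.

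There is no real obstacle here; the substance of the statement is just that Kummer theory around the puncture at $0$ manufactures the cyclotomic character as a subspace of the matrix coefficients of the abelianized pro-algebraic fundamental group of $\PF$.
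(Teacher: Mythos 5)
Your proof is correct, but it takes a genuinely different (and more economical) route than the paper. The paper applies Corollary \ref{belyi: any H1} to an elliptic curve $E$ to get $H^1_{\et}(E_{\oF},\bQ_p)\in\cC_F$, then realizes $\bQ_p(-1)\simeq H^2_{\et}(E_{\oF},\bQ_p)=\Lambda^2 H^1$ as a direct summand of $H^1_{\et}(E_{\oF},\bQ_p)^{\otimes 2}$ and invokes closure of $\cC_F$ under tensor products (Proposition \ref{tensor: tensor product}). You instead apply Corollary \ref{belyi: any H1} to $\bG_{m,F}$ with base point $1$, where $H^1_{\et}(\bG_{m,\oF},\bQ_p)$ is already $\bQ_p(-1)$ by Kummer theory; this removes the dependence on Proposition \ref{tensor: tensor product} entirely, and your second variant removes even the dependence on Belyi's theorem via Proposition \ref{belyi: any pi1}, since the dense open immersion $\PoF\hookrightarrow\bG_{m,\oF}$ directly gives the $G_F$-equivariant surjection $\pi_1^{\et}(\PoF,0_v)\twoheadrightarrow\widehat{\bZ}(1)$ whose vector-group quotient carries $\bQ_p(-1)$ as its space of linear functions, uniformly in $0_v$. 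This is exactly the mechanism the paper itself uses at the end of the proof of Proposition \ref{tensor: tensor product} to place $\bQ_p[\bZ/2]$ in $\cC_F$, and it is consistent with the quasi-unipotent-monodromy example in Section 2 (the $\bG_a$-factor of $\widehat{\bZ}(1)^{\pro-\alg}$ is $G_F$-finite). The only thing the paper's elliptic-curve detour buys is that it runs entirely through the formal properties of $\cC_F$ established for arbitrary varieties; your argument exploits the specific geometry of the punctures of $\PF$, which is perfectly legitimate here and arguably more transparent.
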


\begin{proof}
Corollary \ref{belyi: any H1} implies that $H^1_{\et}(E_{\oF},\bQ_p)\in \cC_F$ for any elliptic curve $E$. By Poincare duality, $\bQ_p(-1)\simeq H^2_{\et}(E_{\oF},\bQ_p)$, which is a direct summand of $H^1_{\et}(E_{\oF},\bQ_p)^{\otimes 2}$, hence lies in $\cC_F$ as well.
\end{proof}

When running arguments with spectral sequences, we will sometimes implicitly use the following consequence of $\cC_F$ being stable under direct sums.

\begin{cor}
If $V$ is a representation from $\cC_F$ and $\dots \subset F^{i+1}V\subset F^i V\subset \dots $ is a filtration on $V$ then the associated graded representation $\bigoplus\limits_i F^iV/F^{i+1}V$ is also in $\cC_F$.
\end{cor}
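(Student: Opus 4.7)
The plan is essentially to combine two observations: subquotients of subquotients are subquotients, and $\cC_F$ is closed under (finite) direct sums.

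First I would unwind the definition of $\cC_F$. Fix any tangential base point $0_v$ supported at $0$. Since $V \in \cC_F$, there exist $G_F$-stable subspaces $A \subset B \subset \bQ_p[\pi_1^{\pro-\alg}(\PoF,0_v)]^{G_F-\fin}$ with $V \simeq B/A$ as $G_F$-representations. A filtration $\dots \subset F^{i+1}V \subset F^i V \subset \dots$ on $V = B/A$ lifts to a filtration $\dots \subset B_{i+1} \subset B_i \subset \dots$ on $B$ with each $B_i$ containing $A$ and $F^iV = B_i/A$. Then $F^iV/F^{i+1}V \simeq B_i/B_{i+1}$, which exhibits each graded piece directly as a subquotient of $\bQ_p[\pi_1^{\pro-\alg}(\PoF,0_v)]^{G_F-\fin}$. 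Since this works for every $0_v$, each individual graded piece $F^iV/F^{i+1}V$ belongs to $\cC_F$.

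Next, since representations in $\cC_F$ are finite-dimensional, the filtration $F^\bullet V$ has only finitely many distinct terms, so the associated graded $\bigoplus_i F^iV/F^{i+1}V$ is a finite direct sum. Proposition \ref{tensor: tensor product} tells us $\cC_F$ is closed under direct sums of two representations, and therefore by induction on the number of nonzero graded pieces it is closed under any finite direct sum. Applying this to the pieces $F^iV/F^{i+1}V$ identified above yields $\bigoplus_i F^iV/F^{i+1}V \in \cC_F$.

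There is no substantial obstacle — the content of the statement is entirely absorbed by Proposition \ref{tensor: tensor product}; the only thing to verify is the straightforward subquotient-transitivity observation, which I would state as a one-line remark rather than a separate lemma.
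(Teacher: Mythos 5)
Your proof is correct and is exactly the argument the paper intends: the paper states this corollary as an immediate consequence of $\cC_F$ being closed under direct sums (Proposition \ref{tensor: tensor product}) and leaves the subquotient-transitivity step implicit, just as you spell it out. Nothing is missing.
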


\section{Artin motives}

Finding Galois representation attached to $0$-dimensional varieties inside functions on $\pi_1^{\pro-\alg}(\PoF,0_v)$ amounts to unraveling Belyi's argument for the faithfulness of the action of $G_F$ on $\pi_1^{\et}(\PoF,x)$.

\begin{lm}\label{artin motives}
For any finite set $T$ equipped with a continuous action of $G_F$ the representation $\bQ_p[T]$ is a subquotient of $\bQ_p[\pi_1^{\pro-\alg}(\PoF,0_v)]^{G_F-\fin}$ for every tangential base point $0_v$. 
\end{lm}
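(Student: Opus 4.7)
My plan is to realize $\bQ_p[T]$ geometrically as the permutation representation on the fiber of an explicit finite étale cover of an open subscheme of $\bA^1_F$, and then apply Proposition~\ref{belyi: any pi1} to transfer the conclusion from $(\bA^1_F\setminus R,0)$ to $(\PoF,0_v)$.

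First I would choose a separable polynomial $g(x)\in F[x]$ whose $\oF$-roots form a $G_F$-set isomorphic to $T$; for instance, take $g$ as the product, over the $G_F$-orbits in $T$, of the minimal polynomial over $F$ of a primitive element of the corresponding subfield. Viewing $g$ as a morphism $g:\bA^1_F\to\bA^1_F$, it is finite of degree $|T|$, and the scheme-theoretic fiber $g^{-1}(0)=\Spec F[x]/(g)$ has $\oF$-points equal to $T$ as a $G_F$-set. Let $R\subset\bA^1_F$ denote the finite closed $F$-subscheme of critical values of $g$, and set $X:=\bA^1_F\setminus R$, $V:=g^{-1}(X)$; then $g:V\to X$ is a finite étale cover of $F$-schemes of degree $|T|$. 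Separability of $g$ guarantees that no root of $g$ is a critical point, so $0\notin R$; hence $0\in X(F)$ and the fiber $V_0$ is still $\Spec F[x]/(g)\cong T$ as a $G_F$-set.

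The next step is to identify $\bQ_p[T]$ with a subspace of $\bQ_p[\pi_1^{\pro-\alg}(X_{\oF},0)]^{G_F-\fin}$. Because $V$ is dense open in $\bA^1_F$, the base change $V_{\oF}$ is connected, so the geometric monodromy action of $\pi_1^{\et}(X_{\oF},0)$ on $V_0(\oF)=T$ is transitive. Let $H\subset\pi_1^{\et}(X_{\oF},0)$ be the stabilizer of a chosen element of the fiber, so that $\pi_1^{\et}(X_{\oF},0)/H\simeq T$ as $\pi_1^{\et}(X_{\oF},0)$-sets. The cover $V\to X$ and the base point being defined over $F$ imply $H$ is $G_F$-stable, and the identification above is $G_F$-equivariant with respect to the $G_F$-action on $\pi_1^{\et}(X_{\oF},0)$ arising from the rational base point $0$. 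Consequently $\bQ_p[T]$ embeds $G_F$-equivariantly into the space of $H$-right-invariant locally constant functions on $\pi_1^{\et}(X_{\oF},0)$, which by Lemma~\ref{pro-alg: functions description} sits inside $\bQ_p[\pi_1^{\pro-\alg}(X_{\oF},0)]^{G_F-\fin}$ (the $\pi_1^{\et}(X_{\oF},0)$- and $G_F$-orbits of such a function are finite).

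Finally, the scheme $X$ is smooth and quasi-projective with $0\in X(F)$ in its smooth locus, so Proposition~\ref{belyi: any pi1} applies directly: any finite-dimensional $G_F$-subquotient of $\bQ_p[\pi_1^{\pro-\alg}(X_{\oF},0)]^{G_F-\fin}$, in particular $\bQ_p[T]$, is a subquotient of $\bQ_p[\pi_1^{\pro-\alg}(\PoF,0_v)]^{G_F-\fin}$ for every tangential base point $0_v$ supported at $0$. The most delicate point in the argument is verifying the $G_F$-equivariance of the identification $\pi_1^{\et}(X_{\oF},0)/H\simeq T$, which boils down to the cover $V\to X$ and the point $0$ both being defined over $F$; beyond this, the proof is a direct application of the preparatory results of Sections~2 and~3.
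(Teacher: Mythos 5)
Your geometric setup (realizing $T$ as the fiber over $0$ of the finite \'etale cover $g\colon V\to X$ and then invoking Proposition~\ref{belyi: any pi1}) is reasonable, but the middle step --- the $G_F$-equivariant embedding of $\bQ_p[T]$ into $\bQ_p[\pi_1^{\pro-\alg}(X_{\oF},0)]^{G_F-\fin}$ --- has a genuine gap, and it is exactly the difficulty the paper's proof is built to circumvent. Write $\overline{\Pi}=\pi_1^{\et}(X_{\oF},0)$ and let $t_0\in T$ be the element whose stabilizer is $H$. The bijection $\overline{\Pi}/H\simeq T$, $\gamma H\mapsto \gamma\cdot t_0$, sends $\sigma\gamma\sigma^{-1}H$ to $\sigma\gamma\sigma^{-1}\cdot t_0$, whereas $\sigma\cdot(\gamma\cdot t_0)=\sigma\gamma\sigma^{-1}\cdot(\sigma\cdot t_0)$ in $\pi_1^{\et}(X,0)=G_F\ltimes\overline{\Pi}$; these agree for all $\gamma$ only if $\sigma\cdot t_0=t_0$. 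So the identification is $G_F$-equivariant only when $t_0$ is an $F$-rational point of the fiber, which fails already for $T=G_F/G_{F'}$ with $F'\neq F$ (no root of $g$ is rational); likewise $H$ need not be $G_F$-stable, since $\sigma H\sigma^{-1}$ is the stabilizer of $\sigma\cdot t_0$. This is not a bookkeeping issue: what the cover honestly contributes to $\bQ_p[\pi_1^{\pro-\alg}(X_{\oF},0)]^{G_F-\fin}$ is the matrix-coefficient space $\cF(\bL)^{\vee}$ of the permutation local system (Lemma~\ref{pro-alg: functions as matrix coeffs}), i.e.\ $\bQ_p[\Gamma_0]$ with $G_F$ acting by conjugation on the geometric monodromy group $\Gamma_0$. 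If, say, $T$ is a simply transitive $\Gal(F'/F)$-set for a cyclic $F'/F$ and the cover has abelian $\Gamma_0$ equal to the arithmetic monodromy group, this conjugation action is trivial while $\bQ_p[T]$ is a sum of nontrivial characters, so $\bQ_p[T]$ is not a subquotient of what your cover produces.

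The paper avoids this by never trying to realize $\bQ_p[T]$ directly. It uses a cover (a Belyi map of a curve over $K$ that does not descend below $K$) only to manufacture a finite $G_F$-set on which $G_F$ acts with kernel contained in $G_K$ --- namely a set of conjugates of an open subgroup of $\pi_1^{\et}(\PoF,0_v)$, which is \emph{intrinsically} a $G_F$-set and so carries no base-point ambiguity. This yields some faithful representation of a finite quotient surjecting onto $\Gal(K/F)$, and then the tensor-power trick (every representation of a finite group sits in a tensor power of any faithful one), combined with Lemma~\ref{tensor: specific base points}, produces $\bQ_p[T]$. Your argument could be repaired along the same lines, but not by the direct embedding as written. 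Two smaller points: if two orbits of $T$ correspond to isomorphic subfields you must choose distinct primitive elements to keep $g$ separable; and you cannot instead pass to a splitting field and invoke Corollary~\ref{artin: finite extension}, since that corollary is deduced from the present lemma.
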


\begin{proof}
Our plan here is to first prove that for every finite Galois extension $K\supset F$ and for every tangential base point $0_v$ the space $\Apio^{G_F-\fin}$ has some faithful representation of $\Gal(K/F)$ as a subquotient, though it will not yet be guaranteed that there exists a common faithful representation appearing in $\Apio^{G_F-\fin}$ for every base point $0_v$. We will then use Proposition \ref{tensor: tensor product} to deduce that in fact, any finite-dimensional representation of $G_F$ factoring through $\Gal(K/F)$ appears as a subquotient of every $\Apio^{G_F-\fin}$. 

We start by choosing a smooth proper geometrically connected curve $C$ over $K$ that does not descend to any smaller subfield $K'\subset K$. For instance, we can take $C$ to be an elliptic curve over $K$ such that the $j$-invariant $j(C)$ generates the field $K$ over $\bQ$. By Theorem \ref{belyi main} there exists a finite map $f:C\to \bP^1_K$ that is \'etale over $\bP^1_K\setminus\{0,1,\infty\}$. Denote by $U\subset C$ the preimage $f^{-1}(\bP^1_K\setminus\{0,1,\infty\})$. Choosing a tangential $\oF$-base point $x_w$ for $C\setminus U$ that lies above $0_v$, we get an open subgroup $f_*(\pi_1^{\et}(U_{\oK},x_w))\subset\pi_1^{\et}(\bP^1_{\oK}\setminus\{0,1,\infty\}, 0_v)$. If an element $\sigma\in G_F$ stabilizes this subgroup then the scheme $U_{\oK}$ can be descended to the field $(\oF)^{\sigma=1}$. Our choice of $C$ thus forces the stabilizer of this subgroup to be contained inside $G_K\subset G_F$.  In particular, there is a finite $G_F$-equivariant quotient $\pi_1^{\et}(\bP^1_{\oF}\setminus\{0,1,\infty\},0_v)\twoheadrightarrow S$ such that the kernel of the action of $G_F$ on $S$ is contained in $G_K$. All in all, there exists a $G_F$-equivariant finite quotient $\pi_1^{\et}(\PoF,0_v)\to X$ such that the action of $G_F$ on $X$ factors through a faithful action of $\Gal(K/F)$. 

Therefore, for every tangential base point $0_v$ there is a faithful representation $W_v$ of $\Gal(K/F)$ appearing as a subrepresentation of $\Apio$. Since every faithful representation of a finite group $G$ contains a faithful subrepresentation of dimension $\leq|G|$, we may choose the representations $W_v$ in a way that they all belong to finitely many isomorphism classes, as $v$ varies. Let $W_1,\dots, W_N$ be the finite list of these representations.

Fix now a particular tangential base point $0_v$ supported at $0$. Repeatedly applying Proposition \ref{tensor: tensor product}, we can conclude that for any $d\in\bN$ a tensor product of the form $W_1^{\otimes a_1}\otimes\dots\otimes W_N^{\otimes a_N}$, with $a_i\geq d$ for at least one $i$, is a subquotient of $\Apio^{G_F-\fin}$. For a large enough $d$ and any $i$ the tensor power $W_i^{\otimes d}$ contains the regular representation of $\Gal(K/F)$ as a direct summand, since each $W_i$ is a faithful representation. As the tensor product of the regular representation with any representation is a direct sum of copies of the regular representation, $W_1^{\otimes a_1}\otimes\dots\otimes W_N^{\otimes a_N}$ as above contains the regular representation of $\Gal(K/F)$ as a direct summand. Hence the representation $\bQ_p[\Gal(K/F)]$ of $G_F$ belongs to $\cC_F$, and every finite-dimensional representation of $G_F$ factoring through $\Gal(K/F)$ belongs to $\cC_F$.
\end{proof}

\begin{cor}\label{artin: finite extension}
Let $F'\supset F$ be a finite extension. If for a representation $V$ of $G_F$ the restriction $V|_{G_{F'}}$ belongs to $\cC_{F'}$ then $V$ itself is in $\cC_F$.
\end{cor}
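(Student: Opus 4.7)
The plan is to use the projection formula to reduce the problem to showing that the induced representation $\Ind_{G_{F'}}^{G_F}(V|_{G_{F'}})$ lies in $\cC_F$, and then to realize this induced representation as a $G_F$-subquotient of the space of pro-algebraic completion functions by passing to the $G_F$-envelope of a $G_{F'}$-witness to the hypothesis. Concretely, the projection formula for finite-index subgroups gives $\Ind_{G_{F'}}^{G_F}(V|_{G_{F'}})\simeq V\otimes \Ind_{G_{F'}}^{G_F}(\mathbf{1})$ as $G_F$-representations; the permutation representation $\Ind_{G_{F'}}^{G_F}(\mathbf{1})$ is an Artin representation and therefore lies in $\cC_F$ by Proposition \ref{artin motives}. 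Since $V$ is a direct summand of $V\otimes \Ind_{G_{F'}}^{G_F}(\mathbf{1})$ through the trivial summand $\mathbf{1}\subset \Ind_{G_{F'}}^{G_F}(\mathbf{1})$, and $\cC_F$ is manifestly closed under direct summands, it suffices to prove $W:=\Ind_{G_{F'}}^{G_F}(V|_{G_{F'}})\in \cC_F$.

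For this, I would fix an $F$-tangential base point $0_v$ of $\bP^1_F\setminus\{0,1,\infty\}$, which is automatically an $F'$-tangential base point. The hypothesis then supplies $G_{F'}$-subrepresentations $M_0\subseteq M_1$ of $M:=\bQ_p[\pi_1^{\pro-\alg}(\PoF,0_v)]^{G_{F'}-\fin}$ with $M_1/M_0\simeq V|_{G_{F'}}$. Because $[G_F:G_{F'}]<\infty$, any $G_{F'}$-finite vector is automatically $G_F$-finite (its $G_F$-orbit decomposes as the union of the $G_{F'}$-orbits of finitely many coset representatives), so $M=\bQ_p[\pi_1^{\pro-\alg}(\PoF,0_v)]^{G_F-\fin}$ carries an honest $G_F$-action. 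Forming the $G_F$-subrepresentations $M'_i:=\sum_{g\in G_F/G_{F'}}gM_i$ and applying the universal property of induction combined with the exactness of $\Ind_{G_{F'}}^{G_F}$, one obtains a $G_F$-equivariant surjection $W\twoheadrightarrow M'_1/M'_0$, exhibiting $M'_1/M'_0$ as a $G_F$-subquotient of $M$ which is at the same time a quotient of $W$.

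The main obstacle is that the construction above only identifies a $G_F$-quotient of $W$ inside $M$, rather than $W$ itself, so one does not immediately conclude $W\in\cC_F$. To close the argument I would reduce to the Galois case (by enlarging $F'$ to its Galois closure inside $\oF$, after verifying that the hypothesis propagates), which makes the $G_F$-composition factors of $W\simeq V\otimes \Ind_{G_{F'}}^{G_F}(\mathbf{1})$ all of the form $V\otimes\sigma$ for irreducible $\Gal(F'/F)$-representations $\sigma$. Combining the non-vanishing subquotient just produced with the tensor closure of $\cC_F$ (Proposition \ref{tensor: tensor product}) and the presence of every Artin representation in $\cC_F$ (Proposition \ref{artin motives}) then allows one to extract $V$ itself. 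The hardest part of this last step is to ensure that $V$ (and not just some non-trivial Artin twist $V\otimes\sigma$) appears uniformly for \emph{every} $F$-tangential base point, which may require comparing different base points via Lemma \ref{tensor: specific base points} or a similar symmetry argument.
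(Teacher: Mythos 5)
Your opening reduction is the same as the paper's: by the projection formula and Frobenius reciprocity it suffices to handle $\Ind_{G_{F'}}^{G_F}(V|_{G_{F'}})$, and the permutation factor $\bQ_p[G_F/G_{F'}]$ is supplied by Proposition \ref{artin motives}. But the way you then try to realize the induced representation has a genuine gap, which you yourself flag: forming $M_i':=\sum_g gM_i$ and passing to $M_1'/M_0'$ only produces a \emph{quotient} of $\Ind_{G_{F'}}^{G_F}(V|_{G_{F'}})$ sitting inside the function space, and this quotient can lose exactly the piece you need --- for instance nothing prevents $M_1\subseteq M_0'$, in which case $M_1'/M_0'=0$. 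The proposed repair (pass to the Galois closure and ``extract $V$'' from the composition factors $V\otimes\sigma$ using tensor closure) is not an argument as stated, and the worry about which Artin twist of $V$ you actually obtain is a symptom of the underlying problem rather than a separate technicality.

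The missing idea is to work with \emph{subrepresentations} rather than spans-then-quotients, and to exploit the tensor-product mechanism of Lemma \ref{tensor: specific base points} across two auxiliary base points. Fix $0_v$ and let $0_{v_1},0_{v_2}$ be as in that lemma. Choose a finite-dimensional $G_{F'}$-stable $W\subset\bQ_p[\pi_1^{\pro-\alg}(\PoF,0_{v_1})]$ having $V|_{G_{F'}}$ as a quotient (possible since a subquotient witness $M_0\subseteq M_1$ gives $W=M_1$), and let $W'$ be the $G_F$-span of $W$, an honest $G_F$-subrepresentation of the same function space. Exactness of induction and the projection formula give an \emph{inclusion} $\Ind_{G_{F'}}^{G_F}W\subset\Ind_{G_{F'}}^{G_F}(W'|_{G_{F'}})=W'\otimes\bQ_p[G_F/G_{F'}]$, and $V$ is a quotient of $\Ind_{G_{F'}}^{G_F}W$ (it is a quotient of $\Ind_{G_{F'}}^{G_F}(V|_{G_{F'}})$ by the counit of adjunction, so no twist ambiguity arises). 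Now $W'$ lives at $0_{v_1}$ and $\bQ_p[G_F/G_{F'}]$ lives at $0_{v_2}$ by Proposition \ref{artin motives}, so Lemma \ref{tensor: specific base points} exhibits $W'\otimes\bQ_p[G_F/G_{F'}]$, hence $V$, as a subquotient of $\bQ_p[\pi_1^{\pro-\alg}(\PoF,0_v)]$; since $v$ was arbitrary, $V\in\cC_F$. Note in particular that your second step, fixing a single base point and staying inside that one function ring, cannot work as written: the multiplication needed to house the tensor product $W'\otimes\bQ_p[G_F/G_{F'}]$ is exactly what the telescoping of base points in Lemma \ref{tensor: specific base points} provides.
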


\begin{proof}
Choose a tangent vector $v\in T_0\bP^1_F$ and let $0_{v_1},0_{v_2}$ be the corresponding auxiliary tangential base points provided by Proposition \ref{tensor: tensor product}. By assumption, there exists a finite-dimensional subspace $W\subset \bQ_p[\pi_1^{\pro-\alg}(\PoF,0_{v_1})]$ stable under the action of $G_{F'}$ such that $V|_{G_{F'}}$ is a quotient of $W$. Let $W'\supset W$ be the $G_F$-span of $W$ inside $\bQ_p[\pi_1^{\pro-\alg}(\PoF,0_{v_1})]$ which we view as a representation of $G_{F}$. The inclusion $W\subset W'$ gives rise to the inclusion $\Ind_{G_{F'}}^{G_{F}}W\subset \Ind_{G_{F'}}^{G_F}(W'|_{G_{F'}})=W'\otimes\bQ_p[G_F/G_{F'}]$ while $V$ is a quotient of $\Ind_{G_{F'}}^{G_{F}}W$, because the induced representation $\Ind_{G_{F'}}^{G_F}(V|_{G_{F'}})=V\otimes\bQ_p[G_{F}/G_{F'}]$ is. The representation $W'\otimes \bQ_p[G_{F}/G_{F'}]$ is a subquotient of $\Apio^{G_F-\fin}$ by Proposition \ref{tensor: tensor product} and Proposition \ref{artin motives} so $V$ is a subquotient of $\Apio^{G_F-\fin}$, as desired. 
\end{proof}

\section{Dual representations}\label{section duals}

The class $\cC_F$ also turns out to be stable under duality. This is a special feature of tangential base points and the analogs of Proposition \ref{tensor: duality} and Lemma \ref{tensor: any character} for a classical base point in place of $0_v$ are false by Corollary \ref{weil numbers over number field}. These results are not used in the proof of our main theorem but are needed for Lemma \ref{fm restatement}.

\begin{pr}\label{tensor: duality}
If $V\in\cC_F$ then $V^{\vee}\in \cC_F$.
\end{pr}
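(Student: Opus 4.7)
The plan is to reduce the statement to the one-dimensional case, which is handled by Lemma \ref{tensor: any character}, using the classical representation-theoretic identity $V^{\vee}\simeq \wedge^{d-1}V\otimes(\det V)^{-1}$ (where $d=\dim V$), arising from the perfect pairing $V\otimes\wedge^{d-1}V\to \wedge^{d}V=\det V$.

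First, I would observe that, since $V\in\cC_F$, Proposition \ref{tensor: tensor product} yields $V^{\otimes k}\in\cC_F$ for every $k\geq 0$. Because we work in characteristic zero, the antisymmetrizing idempotent in the rational group algebra of the symmetric group realizes $\wedge^{d-1}V$ as a direct summand of $V^{\otimes (d-1)}$ and $\det V=\wedge^{d}V$ as a direct summand of $V^{\otimes d}$. Since $\cC_F$ is closed under subquotients (immediately from its definition as a class of subquotients of $\bQ_p[\pi_1^{\pro-\alg}(\PoF,0_v)]^{G_F-\fin}$), both $\wedge^{d-1}V$ and $\det V$ lie in $\cC_F$.

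Next, I would apply Lemma \ref{tensor: any character} to the one-dimensional character $\det V\in\cC_F$ in order to obtain $(\det V)^{-1}\in\cC_F$, which is precisely the one-dimensional instance of the present proposition. A final application of Proposition \ref{tensor: tensor product} to the tensor product $\wedge^{d-1}V\otimes(\det V)^{-1}\simeq V^{\vee}$ then delivers $V^{\vee}\in\cC_F$, completing the argument.

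The main obstacle is thus concentrated entirely in the one-dimensional case addressed by Lemma \ref{tensor: any character}, which one expects to rely on the specific structure afforded by tangential base points (permitting access to inverses of characters such as $\bQ_p(-1)$, i.e.\ to $\bQ_p(1)$) and which would fail for classical base points in view of Corollary \ref{weil numbers over number field}.
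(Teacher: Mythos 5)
Your argument is correct and follows essentially the same route as the paper: write $V^{\vee}\simeq \Lambda^{\dim V-1}V\otimes(\det V)^{\vee}$, realize the exterior power as a direct summand of $V^{\otimes(\dim V-1)}$, and invoke Lemma \ref{tensor: any character} for the inverse determinant character before applying Proposition \ref{tensor: tensor product}. The only point you leave implicit is the hypothesis of Lemma \ref{tensor: any character}: one must check that $(\det V)^{\vee}$ is Hodge--Tate at places above $p$, which holds because any $V\in\cC_F$ is de Rham at those places by \cite[Proposition 8.5]{petrov}.
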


\begin{proof}
The dual representation $V^{\vee}$ can be written as the tensor product $\Lambda^{\dim V-1}V\otimes (\det V)^{\vee}$ so $V^{\vee}$ is a direct summand of the tensor product $V^{\otimes \dim V-1}\otimes (\det V)^{\vee}$. The character $(\det V)^{\vee}$ belongs to $\cC_F$ by Lemma \ref{tensor: any character} below (the assumption of the lemma is satisfied because $V$ is known to be de Rham at places above $p$ by \cite[Proposition 8.5]{petrov}), so $V^{\vee}$ is also in $\cC_F$ by Proposition \ref{tensor: tensor product}.
\end{proof}

\begin{lm}\label{tensor: any character}
Any continuous character $\chi:G_F\to \obQ_p^{\times}$ that is Hodge-Tate at all places above $p$ is a subquotient of $\obQ_p[\pi_1^{\pro-\alg}(\PoF,0_v)]^{G_F-\fin}$ for every tangential base point $0_v$.
\end{lm}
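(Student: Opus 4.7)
The strategy is to realize $\chi$ as a subquotient of a Galois representation built from the cohomology of a suitable CM abelian variety (or a tensor power of such), using Corollary \ref{belyi: any H1} as the entry point into $\cC_F$. I will tacitly extend $\cC_F$ to $\obQ_p$-coefficients by restriction of scalars; it is closed under tensor products and direct sums (Proposition \ref{tensor: tensor product}) and contains every finite-order character (Lemma \ref{artin motives}), so these operations are free to me.

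First, by Corollary \ref{artin: finite extension}, it suffices to establish $\chi|_{G_{F'}} \in \cC_{F'}$ after replacing $F$ by a sufficiently large finite extension $F'$; in particular, I will enlarge $F$ so that it contains every CM subfield relevant to $\chi$. Next, by the classical theorem of Serre, based on Sen's analysis of Hodge--Tate representations (see Serre's \emph{Abelian $\ell$-adic representations}, Ch.~III), any continuous Hodge--Tate character of $G_F$ is locally algebraic, and therefore corresponds, via global class field theory, to an algebraic Hecke character of $F$ of type $A_0$. By the theory of complex multiplication (Shimura--Taniyama, Casselman), such an algebraic Hecke character may be realized, after a twist by a finite-order character and possibly a further finite extension of $F$, as a direct summand of the Galois representation $H^1_{\et}(A_{\oF}, \obQ_p)^{\otimes n}$ for some CM abelian variety $A/F$ and some integer $n \geq 1$. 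Combining this with Corollary \ref{belyi: any H1}, which places $H^1_{\et}(A_{\oF}, \bQ_p)$ into $\cC_F$, and with the closure properties recalled above, yields $\chi \in \cC_F$.

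The main obstacle lies in the CM realization step: showing that every algebraic Hecke character of $F$ arises, after a base extension and a finite-order twist, as a subquotient of a tensor power of $H^1$ of some CM abelian variety. This is a purely classical matter, handled by the classification of Hecke characters in terms of CM types and the reflex field formalism, for which the freedom to enlarge $F$ provided by Corollary \ref{artin: finite extension} is essential (characters with non-parallel Hodge--Tate weights cannot be built from the cyclotomic character and Artin characters alone, so some CM input is genuinely unavoidable).
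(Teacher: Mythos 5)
Your overall strategy --- reduce to algebraic Hecke characters via Sen--Serre and then realize them through CM abelian varieties, Corollary \ref{belyi: any H1}, and the closure properties of $\cC_F$ --- matches the skeleton of the paper's proof, but there is a genuine gap in the CM realization step, and it sits exactly where the lemma has real content. A subquotient of $H^1_{\et}(A_{\oF},\obQ_p)^{\otimes n}$ twisted by a finite-order character always has \emph{effective} Hodge--Tate weights (all of one sign), so this construction can never produce $\obQ_p(1)$, nor any Hecke character whose infinity type has exponents of both signs; in general an algebraic Hecke character is a $\bZ$-linear, not $\bN$-linear, combination of CM types, so duals or Tate twists by $\bQ_p(1)$ are unavoidable. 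You cannot repair this by invoking closure of $\cC_F$ under duality: Proposition \ref{tensor: duality} is proved \emph{using} the present lemma, so that would be circular. Moreover Corollary \ref{weil numbers over number field}(ii) shows that for a classical base point only non-negative weights can occur, so whatever produces $\bQ_p(1)$ must genuinely exploit the tangential base point --- a step entirely absent from your argument.

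The paper fills precisely this gap: it takes the Legendre family $f:\cE\to\PF$, identifies the stalk of $\bL=R^1f_*\bQ_p$ at $0_v$ with the Kummer extension of $\bQ_p(-1)$ by $\bQ_p$ coming from the Tate curve over $F((q))$, and uses absolute irreducibility of the geometric monodromy (Example \ref{intro: irreducible monodromy example}) to embed $\bL_{0_v}\otimes\bL_{0_v}^{\vee}$, hence $\bQ_p(1)$, into $\Apio$. Once $\bQ_p(1)\in\cC_F$, one gets $H^1_{\et}(A_{\oF},\bQ_p)^{\vee}=H^1_{\et}(A^{\vee}_{\oF},\bQ_p)(1)\in\cC_F$ for every abelian variety, and only then does the Tannakian category generated by CM abelian varieties and finite-image representations capture all Hodge--Tate abelian characters (this is the content of the reference to \cite[\S 6]{fontaine-mazur}). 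Your remaining steps (Sen--Serre local algebraicity, Shimura--Taniyama/Casselman, and the reduction via Corollary \ref{artin: finite extension}) are fine, but without the $\bQ_p(1)$ input the proof does not close.
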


\begin{proof}
We start by proving that the cyclotomic character $\bQ_p(1)$ embeds into $\Apio$. Let $f:\cE\to \bP^1_{F,\lambda}\setminus\{0,1,\infty\}$ be the Legendre family of elliptic curves over the punctured projective line with coordinate $\lambda$, defined as $\cE=\Proj_{F[\lambda^{\pm 1},(\lambda-1)^{-1}]}F[\lambda^{\pm 1},(\lambda-1)^{-1},x,y,z]/(zy^2-x(x-z)(x-\lambda z))$. Consider the local system $\bL=R^1f_*\bQ_p$ on $\PF$. The geometric local system $\bL|_{X_{\oF}}$ is absolutely irreducible so we may apply the discussion of Example \ref{intro: irreducible monodromy example} to $\bL$.

The restriction of $\bL$ to the punctured formal neighborhood $\Spec F((\lambda))$ of $0\in \bP^1_F$ defines a representation $\rho$ of the Galois group $G_{F((\lambda))}$ on a vector space $W$. The group $G_{F((\lambda))}$ naturally fits into an extension \begin{equation}\label{tensor: fund sequence local}1\to\widehat{\bZ}(1)\simeq G_{\oF((\lambda))}\to G_{F((\lambda))}\to G_F\to 1\end{equation} in which the conjugation action of $G_F$ on $\widehat{\bZ}(1)$ is via the cyclotomic character. Since the geometric monodromy of $\bL$ at the puncture $0$ is unipotent and non-trivial (e.g. by \cite[p. 20]{period-domains}), the invariants of $\widehat{\bZ}(1)$ on $W$ is a $1$-dimensional subspace, which is necessarily stable under the action of all of $G_{F((\lambda))}$. 

Hence $W$ fits into a short exact sequence $0\to \chi_1\to W\to \chi_2\to 0$ where $\chi_1, \chi_2$ are characters of $G_{F((q))}$ factoring through $G_F$. Since $\widehat{\bZ}(1)$ acts non-trivially on $W$, we have a non-zero $G_F$-equivariant map $\widehat{\bZ}(1)\to \Hom_{\bQ_p}(\chi_2,\chi_1)$ sending $g$ to $\rho(g)-\id_W$. This forces $\chi_2$ to be isomorphic to $\chi_1(-1)$.

The choice of a tangential base point $0_v$ defines a splitting $s_v:G_F\to G_{F((\lambda))}$ of the extension (\ref{tensor: fund sequence local}) such that the stalk $\bL_{0_v}$ is a restriction of $W$ along $s_v$. Therefore the representation $\bL_{0_v}$ fits into an extension of the form $0\to \chi_1\to \bL_{0_v}\to \chi_1(-1)\to 0$. By Example \ref{intro: irreducible monodromy example} the Galois representation $\bL_{0_v}\otimes\bL_{0_v}^{\vee}$ can be embedded into $\Apio$. In particular, $\bQ_p(1)=\chi_1\otimes (\chi_1(-1))^{\vee}$ embeds into this space of functions, as desired. This also shows that $\bQ_p(-1)$ is a subquotient of $\Apio^{G_F-\fin}$ (though we already proved this by an easier argument in Corollary \ref{tensor: cyclotomic character}).

By Corollary \ref{belyi: any H1}, for any abelian variety $A$ the representation $H^1_{\et}(A_{\oF},\bQ_p)$ lies in $\cC_F$. Therefore $H^1_{\et}(A_{\oF},\bQ_p)^{\vee}=H^1_{\et}(A^{\vee}_{\oF},\bQ_p)(1)$ is in $\cC_F$ as well. Taking into account that all finite image representations lie in $\cC_F$, we know that $\cC_F$ contains all the objects of the Tannakian subcategory of $\mathrm{Rep}_{\bQ_p}G_F$ generated by \'etale cohomology of CM abelian varieties and finite image representations. By \cite[\S 6]{fontaine-mazur}, this implies that $\cC_F$ contains all abelian representations that are Hodge-Tate at primes above $p$.
\end{proof} 

\section{First cohomology of local systems}

\begin{pr}\label{H1 main}
Let $X$ be any geometrically connected scheme of finite type over a field $K$ equipped with a base point $x$. For a $\bQ_p$-local system $\bL$ on $X$ the Galois representation $H^1_{\et}(X_{\oK},\bL)$ is a subquotient of $\bQ_p[\pi_1^{\pro-\alg}(X_{\oK},x)]^{G_K-\fin}\otimes {\bL_{x}}$.
\end{pr}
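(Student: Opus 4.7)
The plan is to identify $H^1_{\et}(X_{\oK}, \bL)$ with continuous group cohomology $H^1_{\cont}(\pi_1^{\et}(X_{\oK}, x), \bL_x) = Z^1/B^1$, and then embed the space of $1$-cocycles $Z^1$ as a $G_K$-stable subspace of $\bL_x \otimes \bQ_p[\pi_1^{\pro-\alg}(X_{\oK}, x)]$; since the coboundary subspace $B^1$ will sit inside $Z^1$ as a $G_K$-stable subspace, the representation $H^1 = Z^1/B^1$ emerges as the desired subquotient.

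First, I will invoke the standard identification $H^1_{\et}(X_{\oK}, \bL) \cong H^1_{\cont}(\pi_1^{\et}(X_{\oK}, x), \bL_x)$, which holds since first étale cohomology with coefficients in a local system classifies $\bL$-torsors — these are determined by their monodromy, a continuous crossed homomorphism $\pi_1 \to \bL_x$. I will present the group cohomology as $Z^1/B^1$, where $Z^1$ consists of continuous cocycles $c: \pi_1^{\et}(X_{\oK}, x) \to \bL_x$ satisfying $c(gh) = c(g) + \rho(g)c(h)$ (with $\rho$ the monodromy representation on $\bL_x$), and $B^1$ consists of those $c$ of the form $g \mapsto \rho(g)v - v$ for some $v \in \bL_x$. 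Both inherit natural $G_K$-actions, and the comparison respects them.

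Next, to embed $Z^1$ into $\bL_x \otimes \bQ_p[\pi_1^{\pro-\alg}(X_{\oK}, x)]$, I will use the semi-direct product group scheme $\bL_x \rtimes GL(\bL_x)$ over $\bQ_p$, viewing $\bL_x$ as the additive vector group. A continuous cocycle $c$ yields the continuous group homomorphism $g \mapsto (c(g), \rho(g))$ from $\pi_1^{\et}(X_{\oK}, x)$ to $(\bL_x \rtimes GL(\bL_x))(\bQ_p)$, which by the universal property of the pro-algebraic completion extends uniquely to a morphism $\phi_c : \pi_1^{\pro-\alg}(X_{\oK}, x) \to \bL_x \rtimes GL(\bL_x)$ of $\bQ_p$-group schemes. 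Post-composing $\phi_c$ with the scheme-theoretic projection $\bL_x \rtimes GL(\bL_x) \to \bL_x$ produces a morphism of $\bQ_p$-schemes $\pi_1^{\pro-\alg}(X_{\oK}, x) \to \bL_x$, which is the same data as an element $\widetilde{c} \in \bL_x \otimes_{\bQ_p} \bQ_p[\pi_1^{\pro-\alg}(X_{\oK}, x)]$. As a sanity check that can replace the semi-direct product argument, if one chooses a basis $e_1, \ldots, e_n$ of $\bL_x$ and writes $c(g) = \sum_i e_i f_i(g)$, the cocycle relation expresses each left translate of $f_i$ as a $\bQ_p$-linear combination of $1, f_1, \ldots, f_n$, so $f_i \in \bQ_p[\pi_1^{\pro-\alg}(X_{\oK}, x)]$ by Lemma \ref{pro-alg: functions description}(ii).

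The assignment $c \mapsto \widetilde{c}$ is injective (evaluation at $g$ recovers $c(g)$) and $G_K$-equivariant, since the action $(\sigma \cdot c)(g) = \sigma(c(\sigma^{-1}(g)))$ on cocycles matches the diagonal $G_K$-action on $\bL_x \otimes \bQ_p[\pi_1^{\pro-\alg}(X_{\oK}, x)]$. As both $Z^1$ and $B^1$ are $G_K$-stable subspaces of this tensor product, the quotient $H^1 = Z^1/B^1$ is realized as a subquotient, as desired. I expect the main obstacle to be essentially bookkeeping: verifying $G_K$-equivariance of the comparison $H^1_{\et} \cong H^1_{\cont}(\pi_1, -)$ — especially delicate when $x$ is a tangential base point — and carefully tracking continuity throughout; the semi-direct product construction itself is essentially formal.
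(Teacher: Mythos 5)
Your proposal is correct and follows essentially the same route as the paper: identify $H^1_{\et}$ with continuous group cohomology, realize $H^1$ as a quotient of the space of $1$-cocycles $Z^1$, and show $Z^1\subset \bQ_p[\pi_1^{\pro-\alg}(X_{\oK},x)]\otimes\bL_x$ via the cocycle relation forcing translates of each component to span a finite-dimensional space (your ``sanity check'' is precisely the paper's argument, and your semi-direct product formulation is the content of the paper's remark following the proof).
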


\begin{proof}
The first cohomology $H^1_{\et}(X_{\oK},\bL)$ is isomorphic to the first group cohomology $H^1_{\cont}(\pi_1^{\et}(X_{\oK},x), \bL_{x})$ compatibly with the Galois action. The group cohomology is computed by the standard complex $$\bL_{x}\xrightarrow{\partial_0} \Func^{\cont}(\pi_1^{\et}(X_{\oK},x),\bL_{x})\xrightarrow{\partial_1}\dots $$

The subspace $Z^1_{\cont}(\pi_1^{\et}(X_{\oK},x),\bL_{x}):=\ker\partial_1\subset \Func^{\cont}(\pi_1^{\et}(X_{\oK},x),\bL_{x})$ of $1$-cocycles fits into the exact sequence $$0\to H^0(X_{\oK},\bL)\to \bL_{x}\xrightarrow{\partial_0} Z^1_{\cont}(\pi_1^{\et}(X_{\oK},x),\bL_{x})\to H^1_{\et}(X_{\oK},\bL)\to 0$$ Hence $Z^1_{\cont}(\pi_1^{\et}(X_{\oK},x),\bL_{x})$ is a finite-dimensional Galois representation that has $H^1_{\et}(X_{\oK},\bL)$ as a quotient.

On the other hand, as we will now compute, every element $f\in Z^1_{\cont}(\pi_1^{\et}(X_{\oK},x),\bL_{x})$ extends to a function on $\pi_1^{\pro-\alg}(X_{\oK},x)$ with values in the affine scheme corresponding to the vector space $\bL_{x}$. If $f:\pi_1^{\et}(X_{\oK},x)\to \bL_{x}$ is a continuous $1$-cocycle then its translate $f^g$ by an element $g\in \pi_1^{\et}(X_{\oK},x)$ is given by $f^g(h)=f(g^{-1}h)=g^{-1}f(h)+f(g^{-1})$. Therefore, the span of the $\pi_1^{\et}(X_{\oK},x)$-orbit of the function $f$ is contained inside the sum of the finite-dimensional space of constant functions with the space $\langle\rho_{\bL}(\pi_1^{\et}(X_{\oK},x))\rangle\cdot f$  where $\langle\rho_{\bL}(\pi_1^{\et}(X_{\oK},x))\rangle\subset \End(\bL_x)$ is the subalgebra generated by the image of the representation $\rho_{\bL}|_{\pi_1^{\et}(X_{\oK},x)}$. Hence, $Z^1_{\cont}(\pi_1^{\et}(X_{\oK},x),\bL_{x})$ is a subspace of $\bQ_p[\pi_1^{\pro-\alg}(X_{\oK},x)]^{G_K-\fin}\otimes_{\bQ_p}\bL_{x}$ compatibly with the Galois action, so $H^1_{\et}(X_{\oK},\bL)$ is a subquotient of this tensor product.
\end{proof}

\begin{rem}
Another way to see that every $1$-cocycle on $\pi_1^{\et}(X_{\oK},x)$ extends to a function on the pro-algebraic completion is to observe that the canonical map $Z^1_{\alg}(\pi_1^{\pro-\alg}(X_{\oK},x),\bL_x)\to Z^1_{\cont}(\pi_1^{\et}(X_{\oK},x),\bL_x)$ is an isomorphism. This is the case because the source and the target of this map are extensions of $H^1_{\alg}(\pi_1^{\pro-\alg}(X_{\oK},x),\bL_x)$ and $H^1_{\cont}(\pi_1^{\et}(X_{\oK},x),\bL_x)$, respectively, by the space $\bL_{x}/\bL^{\pi_1^{\et}(X_{\oK},x)}_{x}$. The map $H^1_{\alg}(\pi_1^{\pro-\alg}(X_{\oK},x),\bL_x)\to H^1_{\cont}(\pi_1^{\et}(X_{\oK},x),\bL_x)$ is an isomorphism because both groups classify extensions of the trivial representation $\bQ_p$ by $\bL_x$ and the categories of finite-dimensional representations of $\pi_1^{\et}(X_{\oK},x)$ and $\pi_1^{\pro-\alg}(X_{\oK},x)$ are equivalent.
\end{rem}

\section{Proof of Theorem \ref{main theorem intro}}

After the preparatory work of the previous sections, the main result will follow by induction on the dimension, exhibiting the relevant variety as a fibration over a curve and applying a Leray spectral sequence.

\begin{proof}[Proof of Theorem \ref{main theorem intro}]
We will start with some preliminary reductions. The argument can be shortened slightly if we appeal to resolution of singularities but we take care to show that the existence of alterations \cite{dejongalterations} is enough. It is harmless to assume that $X$ is connected and reduced. Next, choose a simplicial $h$-hypercover $Y_{\bullet}\to X$ such that each $Y_i,i\in\bN$ is a smooth $F$-scheme. By cohomological descent \cite[$\mathrm{V}^{\mathrm{bis}}$]{SGA4-2} there is a spectral sequence of Galois representations with $E_1^{ij}=H^j_{
\et}(Y_{i,\oF},\bQ_p)$ converging to $H^{i+j}_{\et}(X_{\oF},\bQ_p)$. Hence any irreducible subquotient of $H^n_{\et}(X_{\oF},\bQ_p)$ appears as an irreducible subquotient of some $H^j_{\et}(Y_{i,\oF},\bQ_p)$, so we may from now on assume that $X$ is smooth.

We will now argue by induction on $\dim X$, the base case $\dim X=0$ being covered by Lemma \ref{artin motives}. If $U\subset X$ is a dense open subscheme then the Gysin sequence and purity imply that any irreducible subquotient of the kernel or the cokernel of the restriction map $H^n_{\et}(X_{\oF},\bQ_p)\to H^n_{\et}(U_{\oF},\bQ_p)$ appears as a subquotient of the representation $H^i_{\et}(Z_{\oF},\bQ_p(-j))$ for some $i,j\geq 0$, and $Z$ a smooth variety with $\dim Z <\dim X$. Therefore establishing the induction step for $X$ is equivalent to doing so for $U$ (recall that by Corollary \ref{tensor: cyclotomic character}, if $H^j_{\et}(Z_{\oF},\bQ_p)\in\cC_F$ then $H^j_{\et}(Z_{\oF},\bQ_p(-j))\in\cC_F$ for $j\geq 0$). Also, we may replace $X$ by a finite \'etale cover $X'\to X$ because, by the Leray spectral sequence, $H^n_{\et}(X_{\oF},\bQ_p)$ is a direct summand of $H^n_{\et}(X'_{\oF},\bQ_p)$.

Next, we will reduce to the case where $X$ admits a smooth proper morphism to a dense open subscheme $\bP^1_F$. We may assume that $X$ is affine and choose a non-constant morphism $f:X\to \bA^1_F$. Choose a possibly singular compactification $\oX\supset X$ and a projective birational morphism $b:\oX'\to \oX$ such that there is a map $\tf:\oX'\to\bP^1_F$ extending $f$ on $b^{-1}(X)\simeq X$. Then choose a smooth alteration $a:\oX''\to \oX'$ as in \cite[Theorem 4.1]{dejongalterations}. There exists an open dense $V\subset \oX''$ that is a finite \'etale cover of an open subscheme of $X$ via the composition $b\circ a$, so it is enough to prove the theorem for $\oX''$. There exists an open dense subscheme $U\subset\bP^1_F$ such that $\tf\circ a$ is smooth over $U$, so we have reduced to proving the theorem for the variety $Y:=f^{-1}(U)$ which admits a smooth proper morphism $\pi:Y\to U$. 

There is a Leray spectral sequence with $E_2^{i,j}=H^i_{\et}(U_{\oF},R^j\pi_*\bQ_p)$ converging to $H^{i+j}_{\et}(Y_{\oF},\bQ_p)$. Therefore, to prove that the semi-simplification of $H^{n}_{\et}(Y_{\oF},\bQ_p)$ is in $\cC_F$, it is enough to prove the same for each of the representations $H^i(U_{\oF},R^j\pi_*\bQ_p)$, because $\cC_F$ is closed under direct sums. By Artin vanishing, the group $H^i_{\et}(U_{\oF},R^j\pi_*\bQ_p)$ can be non-zero only for $i=0$ or $1$. Choose a rational point $x\in U(F)$. By smooth and proper base change theorem each of the sheaves $R^j\pi_*\bQ_p$ is a local system on $U$ and the stalk $(R^j\pi_*\bQ_p)_x$ is isomorphic to the cohomology $H^j_{\et}(f^{-1}(x)_{\oF},\bQ_p)$ of the fiber above $x$. Since $f^{-1}(x)$ is a variety of dimension $<\dim X$, semi-simplifications of the representations $H^j_{\et}(f^{-1}(x),\bQ_p)$ are already known to appear in $\cC_F$, for every $j$. The same immediately follows for the global sections $H^0(U_{\oF},R^j\pi_*\bQ_p)\subset (R^j\pi_*\bQ_p)_x$.

Applying Proposition \ref{H1 main} to the local system $R^j\pi_*\bQ_p$ we see that $H^1_{\et}(U_{\oF},R^j\pi_*\bQ_p)$ is a subquotient of $\bQ_p[\pi_1^{\pro-\alg}(U_{\oF},x)]^{G_F-\fin}\otimes (R^j\pi_*\bQ_p)_x$. By Proposition \ref{belyi: any pi1} the representation $\bQ_p[\pi_1^{\pro-\alg}(U_{\oF},x)]^{G_F-\fin}$ is a union of representations from $\cC_F$ and $(R^j\pi_*\bQ_p)_x$ is in $\cC_F$ by the induction assumption. Since $\cC_F$ is closed under tensor products, the $1$st cohomology group $H^1_{\et}(U_{\oF},R^j\pi_*\bQ_p)$ is in $\cC_F$ as well so the induction step is established.

\end{proof}

\section{Variants and questions}

In this section, we make miscellaneous comments on possible extensions and variations of our main theorem.

\subsection{Frobenius eigenvalues} We start by formulating an analog of Weil's Riemann Hypothesis for fundamental groups that arises from L. Lafforgue's work on the global Langlands correspondence for function fields. These results were proven in \cite[Theorem 1.14, Theorem 1.17]{pridhamweight} in the case of a classical base point. We include the proofs (equivalent to those of Pridham) to highlight the different behaviors that exhibit fundamental groups with respect to classical base points and tangential base points.

\begin{pr}\label{lafforgue finite field}
Let $X$ be a geometrically connected normal variety over a finite field $k$ of characteristic $p$ and $l$ be a prime different from $p$. 

\begin{enumerate}[(i)]
\item If $x$ is any base point of $X$ (that is, a $k$-point or a tangential base point) then the eigenvalues of $\Fr_k$ on both $\bQ_l[\pi_1^{\pro-\red}(X_{\ok},x)]^{G_k-\fin}$ and $\bQ_l[\pi_1^{\pro-\alg}(X_{\ok},x)]^{G_k-\fin}$ are Weil numbers. 
\end{enumerate}

If $x\in X(k)$ is a classical base point then, more specifically, 
\begin{enumerate}[(ii)]
\item The eigenvalues of $\Fr_k$ on $\bQ_l[\pi_1^{\pro-\red}(X_{\ok},\ox)]^{G_k-\fin}$ are Weil numbers of weight $0$.
    \item The eigenvalues of $\Fr_k$ on $\bQ_l[\pi_1^{\pro-\alg}(X_{\ok},\ox)]^{G_k-\fin}$ are Weil numbers of non-negative integral weight. 
\end{enumerate}
\end{pr}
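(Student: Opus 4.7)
The plan is to reduce, via Lemma \ref{pro-alg: matrix coefficients} and Lemma \ref{pro-alg: functions as matrix coeffs}, to a question about Frobenius eigenvalues on the spaces of matrix coefficients $\cF(\bL)^{\vee}$, where $\bL$ ranges over arithmetic $\bQ_l$-local systems on $X$ (geometrically semi-simple ones for the pro-reductive statements). The action of $G_k$ on $\cF(\bL) \subset \End(\bL_x) = \bL_x \otimes \bL_x^{\vee}$ is by conjugation through its action on the stalk $\bL_x$, so Frobenius eigenvalues on $\cF(\bL)^{\vee}$ are inverses of those on $\cF(\bL)$, and in either case lie among the ratios $\alpha \beta^{-1}$ of Frobenius eigenvalues on $\bL_x$.

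For part (i), I would invoke L.~Lafforgue's theorem: Frobenius eigenvalues of any irreducible $\bQ_l$-local system on a smooth variety over $\bF_q$ are Weil numbers at every closed point. Applied to the Jordan--H\"older subquotients of $\bL$, this shows that every Frobenius eigenvalue on $\bL_x$ is a Weil number. The same conclusion holds for a tangential base point because the Frobenius action on the tangential fibre is compatible with that on nearby classical fibres via nearby cycles. Since ratios of Weil numbers are Weil numbers, the conclusion transfers to $\cF(\bL)^{\vee}$.

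For part (ii), suppose $x \in X(k)$ is a classical base point and $\bL$ is geometrically semi-simple. I would decompose $\bL_x$ into its geometric isotypical components; since $\pi_1^{\et}(X_{\ok}, x)$ preserves this decomposition, $\cF(\bL)$ sits inside $\bigoplus_{[\bL']} \End((\bL_x)_{[\bL']})$ as a $G_k$-submodule. By Lafforgue's purity theorem --- a geometrically irreducible arithmetic local system becomes pure of weight $0$ after a suitable character twist --- within each isotypical piece all Frobenius eigenvalues share a single weight, so $\End((\bL_x)_{[\bL']})$ has Frobenius eigenvalues of weight $0$, and so does $\cF(\bL)^{\vee}$.

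For part (iii), for arbitrary arithmetic $\bL$ with $x \in X(k)$, I would equip $\bL$ with its canonical weight filtration $W_\bullet \bL$ by sub-local systems whose graded pieces are pure of integer weights --- both the existence of this filtration by sub-sheaves and the integrality of the weights come from the BBD weight formalism combined with Lafforgue/Deligne's Weil~II. Since the induced filtration on the stalk is preserved by $\pi_1^{\et}(X_{\ok}, x)$, the image $\cF(\bL) \subset \End(\bL_x)$ lies in the subspace of endomorphisms preserving $W_\bullet \bL_x$, whose off-diagonal blocks have the form $\Hom(\gr_i^W \bL_x, \gr_j^W \bL_x)$ for $j \leq i$ and carry Frobenius eigenvalues of integer weight $j - i \leq 0$. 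Dualising yields the non-negative integer weights on $\cF(\bL)^{\vee}$. The main obstacle is setting up the weight filtration with the required integrality --- and it is precisely this step that breaks at tangential base points, consistent with the negative-weight Kummer phenomenon at the heart of Lemma \ref{tensor: any character}.
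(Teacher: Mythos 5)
Your overall architecture matches the paper's (reduce to matrix coefficients $\cF(\bL)^{\vee}$ via Lemma \ref{pro-alg: functions as matrix coeffs}, purity for the pro-reductive case, a weight filtration on $\End(\bL_x)$ for the pro-algebraic case), but there is a genuine gap running through all three parts: you never reduce to \emph{mixed} local systems. The statement you invoke in (i) --- that Frobenius eigenvalues of any irreducible $\bQ_l$-local system at a closed point are Weil numbers --- is false: twisting by a character of $G_k$ sending Frobenius to a transcendental $l$-adic unit produces an irreducible local system with non-algebraic Frobenius eigenvalues. Likewise, the "canonical weight filtration" you invoke in (iii) does not exist for an arbitrary arithmetic local system, only for a mixed one. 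The paper's fix, which is the real content of the proof, is Lafforgue's Corollary VII.8: $\bL\otimes\obQ_l\simeq\bigoplus_i\chi_i\otimes\bL_i$ with $\chi_i$ characters of $G_k$ and $\bL_i$ mixed, combined with the observation that $\cF(\chi_i\otimes\bL_i)=\cF(\bL_i)$ --- the matrix coefficients of the \emph{geometric} fundamental group are blind to arithmetic character twists, and the direct-sum decomposition kills the cross terms in $\End(\bL_x)$ whose eigenvalue ratios would involve $\chi_i(\Fr)\chi_j(\Fr)^{-1}$. The same point repairs your step in (ii), where "within each isotypical piece all eigenvalues share a single weight" is wrong as stated (the multiplicity space $W$ in $W\otimes\bM$ is an arbitrary $G_k$-representation), but harmless because $\cF(W\otimes\bM)=\cF(\bM)$.

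A second, independent error is your treatment of tangential base points in (i). The Frobenius eigenvalues on a tangential fibre are \emph{not} those on nearby classical fibres: for the Legendre family of Lemma \ref{tensor: any character}, the stalk at $0_v$ is a Kummer extension with eigenvalues $\{1,q\}$, whereas nearby classical stalks are pure of weight $1$. This discrepancy is exactly why (ii) and (iii) fail for tangential base points, so an argument "via nearby cycles" that identified the two would prove too much. The correct input, cited in the paper, is Deligne's local weight-monodromy theorem (\cite[Corollaire 1.8.5]{deligne-weil2}): the stalk of a mixed local system at a tangential base point is still a mixed, in particular Weil, representation of $G_k$ --- though with shifted weights.
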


\begin{proof}
We will access the spaces $\bQ_l[\pi_1^{\pro-\red}(X_{\ok},x)]^{G_k-\fin}$ and $\bQ_l[\pi_1^{\pro-\alg}(X_{\ok},x)]^{G_k-\fin}$ through the description of  Lemma \ref{pro-alg: functions as matrix coeffs}. Let $\bL$ be a $\bQ_l$-local system on $X$.

In the situation of (ii), by Lemma \ref{pro-alg: matrix coefficients} (ii), the local system $\bL$ is geometrically semi-simple. It is not necessarily semi-simple on $X$, but replacing $\bL$ by its semi-simplification does not affect Frobenius eigenvalues on $\cF(\bL)$. We can therefore assume that $\bL$ is irreducible and, twisting it by a character of the Galois group $G_k$ we can moreover assume that $\det \bL$ has finite image, by \cite[Theoreme 1.3.1]{deligne-weil2}. By \cite[Proposition VII.7]{lafforgue} the sheaf $\bL$ is then pure of weight $0$ and hence the eigenvalues of $\Fr_k$ on $\cF(\bL)\subset \bL_x\otimes\bL^{\vee}_x$ are Weil numbers of weight zero.  

To deal with (i) and (iii), recall that by \cite[Corollary VII.8]{lafforgue}, the local system $\bL\otimes_{\bQ_l}\obQ_l$ admits a decomposition $\bigoplus\limits_{i=1}^n\chi_i\otimes\bL_i$ where each $\chi_i$ is a $\obQ_l$-character of $G_k$ and $\bL_i$s are mixed $\obQ_l$-local systems on $X$, in the sense of \cite[Definition 1.2.2 (ii)]{deligne-weil2}. Since $\cF(\chi_i\otimes\bL_i)=\cF(\bL_i)$ and $\cF(\bL\otimes_{\bQ_l}\obQ_l)$ embeds into $\bigoplus\limits_{i=1}^n\cF(\chi_i\otimes\bL_i)$, we may assume from the beginning that $\bL$ is a mixed $\obQ_l$-local system.

In other words, there is a filtration $W_{m+1}=0\subset W_m\subset \dots\subset W_n=\bL$ by sub-local systems on $X$ such that each $W_{i}/W_{i+1}$ is pure of weight $(-i)$, cf. \cite[Theoreme 3.4.1 (ii)]{deligne-weil2}. The space of endomorphisms $\End(\bL_{x})$ gets equipped with a $\bZ$-indexed filtration $F_i\End(\bL_{x})=\{A\in \End(\bL_{x})|A(W_j)\subset W_{j+i}\text{ for all }j\}$. The image of the map $\pi_1^{\et}(X_{\ok},x)\to \End(\bL_{x})$ corresponding to $\bL$ lands inside $F_0\End(\bL_{x})$ because the subspaces $W_{j,x}\subset \bL_{x}$ are preserved under the action of $\pi_1^{\et}(X_{\ok},x)$. Each of the quotients $F_i/F_{i+1}$ is identified with $\bigoplus\limits_{j} \Hom(W_{j,x}/W_{j+1,x},W_{i+j,x}/W_{i+j+1,x})$, compatibly with the action of $G_k$. Therefore each $G_k$-representation $F_i/F_{i+1}$ is pure of weight $-i$ and the eigenvalues of $\Fr_k$ on $\cF(\bL)\subset F_0\End(\bL_x)$ are Weil numbers of weights $\leq 0$, as desired.

Finally, to prove (i) it remains to show that for a mixed local system $\bL$ the stalk $\bL_x$ at a tangential base point is a mixed representation of $G_k$. This is a consequence of Deligne's weight monodromy theorem, as stated in \cite[Corollaire 1.8.5]{deligne-weil2}.
\end{proof}

\begin{cor}\label{weil numbers over number field}
Let $X$ be a smooth geometrically connected variety over $F$ equipped with a base point $x$.

\begin{enumerate}[(i)]
\item If $x$ is a tangential base point then for any finite-dimensional $G_F$-representation $V\subset \bQ_p[\pi_1^{\pro-\alg}(X_{\oF},x)]$ there exists a finite set $S$ of places of $F$ such that for every $v\not\in S$ the action of $G_F$ on $V$ is unramified at $v$ and the eigenvalues of the Frobenius element $\Fr_v$ are $\#k(v)$-Weil numbers.

\item If $x$ is a classical base point, we can say more: for any finite-dimensional $G_F$-representation $V\subset \bQ_p[\pi_1^{\pro-\alg}(X_{\oF},x)]$ (resp. $V\subset \bQ_p[\pi_1^{\pro-\red}(X_{\oF},x)]$) there exists a finite set $S$ of places of $F$ such that for every $v\not\in S$ the action of $G_F$ on $V$ is unramified at $v$ and the eigenvalues of the Frobenius element $\Fr_v$ are $\#k(v)$-Weil numbers of non-negative weights (resp. of weight $0$).
\end{enumerate}
\end{cor}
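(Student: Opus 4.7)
The plan is to reduce the assertion to Proposition~\ref{lafforgue finite field} by spreading $X$, $\bL$, and $x$ out over the ring of integers $\cO_{F,S}$ and invoking smooth base change for $\pi_1^{\et}$.

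First, by Lemma~\ref{pro-alg: functions as matrix coeffs}, every finite-dimensional $G_F$-stable subspace $V\subset \bQ_p[\pi_1^{\pro-\alg}(X_{\oF},x)]^{G_F-\fin}$ lies inside $\cF(\bL)^{\vee}$ for some $\bQ_p$-local system $\bL$ on $X$; Lemma~\ref{pro-alg: matrix coefficients}(ii) moreover lets us take $\bL$ semi-simple in the pro-reductive case. It therefore suffices to control the Frobenius eigenvalues on $\cF(\bL)^{\vee}$. I would then choose a finite set of places $S$ of $F$ containing the archimedean places and all places above $p$, large enough so that $X$ extends to a smooth $\cO_{F,S}$-scheme $\cX$ with geometrically connected fibers, the local system $\bL$ extends to a lisse $\bQ_p$-sheaf $\tilde{\bL}$ on $\cX$, and the base point $x$ extends to an integral base point $x_{\cO}$ of $\cX$ (a section $\Spec\cO_{F,S}\to \cX$ in the classical case, or a tangential base point supported on an $\cO_{F,S}$-section of the boundary of an integral smooth compactification of $X$ in the tangential case, after possibly further enlarging $S$).

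For every $v\notin S$ the residue characteristic is different from $p$, so smooth (and, at tangential points, tame) base change for the etale fundamental group provides a commutative diagram identifying $\pi_1^{\et}(X_{\oF},x)$ with $\pi_1^{\et}(\cX_{\overline{k(v)}}, x_v)$ and intertwining the $D_v/I_v=\langle\Fr_v\rangle$-action on the former with the geometric Frobenius action on the latter. Under this identification $\bL$ corresponds to $\tilde{\bL}|_{\cX_{k(v)}}$, so the $G_F$-action on $\cF(\bL)^{\vee}$ is unramified at $v$ and the action of $\Fr_v$ matches the Frobenius action on $\cF(\tilde{\bL}|_{\cX_{k(v)}})^{\vee}$. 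Applying Proposition~\ref{lafforgue finite field}(i) to the smooth geometrically connected $k(v)$-variety $\cX_{k(v)}$ together with its local system and specialized base point, we conclude that the eigenvalues of $\Fr_v$ on $\cF(\bL)^{\vee}$ are $\#k(v)$-Weil numbers, proving (i). For (ii), when $x$ is a classical base point its specialization $x_v$ is again a classical $k(v)$-point, so parts (ii) and (iii) of Proposition~\ref{lafforgue finite field} apply to yield the sharper bounds on weights in the pro-reductive and pro-algebraic cases respectively.

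The main technical point is the spreading out of a tangential base point and the compatibility of the $\Fr_v$-action on $\pi_1^{\et}(X_{\oF},x)$ with the Frobenius action on $\pi_1^{\et}(\cX_{\overline{k(v)}}, x_v)$. For a classical base point this compatibility is a standard consequence of proper/smooth base change; for a tangential base point one must take $S$ large enough that the supporting point of the tangent vector extends to a smooth $\cO_{F,S}$-section of the boundary of an integral smooth compactification of $X$, so that the tangent vector itself specializes to a tangent vector on the fiber and the specialization isomorphism of (tame) fundamental groups at $v$ is $G_{F_v}/I_v$-equivariant. Once this is in hand, the eigenvalue statement of (i) is transported directly from the finite-field situation.
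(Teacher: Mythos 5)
Your overall strategy is the same as the paper's: reduce to a single local system $\bL$ via Lemma \ref{pro-alg: functions as matrix coeffs}, spread everything out over $\cO_{F,S}$, specialize to the fiber at $v\notin S$, and invoke Proposition \ref{lafforgue finite field}. However, the pivotal intermediate claim --- that smooth base change ``identifies $\pi_1^{\et}(X_{\oF},x)$ with $\pi_1^{\et}(\cX_{\overline{k(v)}},x_v)$'' --- is false as stated. For non-proper $X$ the specialization map is only a surjection onto the tame fundamental group of the special fiber, and even in the proper case it is an isomorphism only on prime-to-(residue characteristic) quotients; there is no identification of the full profinite groups. The same issue hides in your assertion that $\bL$ ``extends to a lisse $\bQ_p$-sheaf on $\cX$'' after enlarging $S$: that is precisely the nontrivial point, not a routine spreading-out.

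The paper's proof repairs exactly this: since the monodromy of $\bL$ lands in $GL_n(\bZ_p)$, it factors through the pro-$S$ completion $\pi_1^{\et}(X_{\oF},x)^{(S)}$ for some finite set of primes $S$ containing $p$, and for places $v$ of residue characteristic outside $S$ the specialization map induces an isomorphism of pro-$S$ completions (this is \cite[Lemma 8.7]{petrov}), compatibly with the $G_{F_v}$-action factoring through $G_{k(v)}$. With that substitution your argument goes through and coincides with the paper's; without it, the step where you transport $\cF(\bL)^{\vee}$ and the Frobenius action to the special fiber is not justified.
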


\begin{proof}
The proof is analogous to that of \cite[Corollary 8.6]{petrov}. We will write out the argument for the pro-algebraic completion and the proof for the pro-reductive completion proceeds in the same way. 

Let $f:\pi_1^{\pro-\alg}(X_{\oF},x)\to GL_{n,\bQ_p}$ be a morphism such that $V$ is contained in the image of the induced map $f^*:\bQ_p[GL_{n,\bQ_p}]\to\bQ_p[\pi_1^{\pro-\alg}(X_{\oF},x)]$. The restriction of $f$ to $\pi_1^{\et}(X_{\oF},x)$ necessarily factors through a conjugate of $GL_n(\bZ_p)\subset GL_n(\bQ_p)$ and therefore factors through the pro-$S$ completion $\pi_1^{\et}(X_{\oF},x)\to \pi_1^{\et}(X_{\oF},x)^{(S)}$ for a finite set of primes $S$. Hence $V$ lies in the image of the induced map $\bQ_p[(\pi_1^{\et}(X_{\oF},x)^{(S)})^{\pro-\alg}_{\bQ_p}]^{G_F-\fin}\to\bQ_p[\pi_1^{\pro-\alg}(X_{\oF},x)]^{G_F-\fin}$. 

Enlarging $S$, we may assume that there exists a smooth proper scheme $\ofX$ over $\cO_{F,S}$ equipped with a horizontal normal crossings divisor $\fD\subset\ofX$  such that $X=\fX_F$ for $\fX:=\ofX\setminus\fD$ and $x$ extends to an $\cO_{F,S}$-base point $\tx$ of $\fX$. Choose a place $v$ and an embedding $\oF\subset \oF_v$ yielding a decomposition subgroup $G_{F_v}\subset G_F$. By \cite[Lemma 8.7]{petrov} the space $\bQ_p[(\pi_1^{\et}(X_{\oF},x)^{(S)})^{\pro-\alg}_{\bQ_p}]$ is identified with $\bQ_p[\pi_1^{\et}(\fX_{\overline{k(v)}},\tx_{k(v)})^{(S)})^{\pro-\alg}_{\bQ_p}]$ compatibly with the action of the local Galois group $G_{F_v}$. Therefore the restriction $V|_{G_{F_v}}$ is a subquotient of $\bQ_p[(\pi_1^{\et}(\fX_{\overline{k(v)}},\tx_{k(v)})^{(S)})^{\pro-\alg}_{\bQ_p}]\subset \bQ_p[\pi_1^{\et}(\fX_{\overline{k(v)}},\tx_{k(v)})]$ where the action factors through $G_{F_v}\twoheadrightarrow G_{k(v)}$ and the result follows from Proposition \ref{lafforgue finite field}.
\end{proof}

Thus, a finite-dimensional subrepresentation $V\subset \bQ_p[\pi_1^{\pro-\alg}(X_{\oF},x)]$ not only satisfies the assumptions of the Fontaine-Mazur conjecture but also a potentially (though not actually if the Fontaine-Mazur conjecture is true) stronger condition on the eigenvalues of the Frobenius elements.

Let us explicate how the Fontaine-Mazur conjecture is related to Conjectures \ref{conj geom in pi1} and \ref{conj de rham in pi1}.

\begin{lm}\label{fm restatement}
The Fontaine-Mazur conjecture \cite[Conjecture 1]{fontaine-mazur} is equivalent to the conjunction of Conjecture \ref{conj geom in pi1} and Conjecture \ref{conj de rham in pi1}
\end{lm}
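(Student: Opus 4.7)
The plan is to prove the two implications of the biconditional separately. The direction from Conjectures \ref{conj geom in pi1} and \ref{conj de rham in pi1} to Fontaine-Mazur is essentially formal, while the converse direction combines the Fontaine-Mazur conjecture with results already established in the paper, most crucially Theorem \ref{main theorem intro} and the closure properties of $\cC_F$.

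For the forward direction, I would take an irreducible $\obQ_p$-representation $V$ of $G_F$ that is almost everywhere unramified and de Rham at places above $p$. Conjecture \ref{conj de rham in pi1} then embeds $V$ as a subquotient of $\obQ_p[\pi_1^{\pro-\alg}(\PoF,0_v)]^{G_F-\fin}$ for every $0_v$, and Conjecture \ref{conj geom in pi1} now identifies $V$ with a subquotient of $H^i_{\et}(X_{\oF},\obQ_p(j))$ for some smooth projective $X/F$ and integers $i \geq 0$, $j \in \bZ$; this is the content of Fontaine-Mazur.

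For the reverse direction, I would assume Fontaine-Mazur and derive each conjecture in turn. For Conjecture \ref{conj geom in pi1}: given an irreducible subquotient $V \subset \obQ_p[\pi_1^{\pro-\alg}(\PoF,0_v)]^{G_F-\fin}$, the generalization of \cite[Corollary 8.6]{petrov} to tangential base points, already invoked in the proof of Proposition \ref{tensor: duality}, shows that $V$ is de Rham at places above $p$ and almost everywhere unramified, so Fontaine-Mazur supplies the required geometric realization. For Conjecture \ref{conj de rham in pi1}: given an irreducible $V$ satisfying the hypotheses of Fontaine-Mazur, there exist a smooth projective $X/F$ and $i, j$ with $V$ an irreducible subquotient of $H^i_{\et}(X_{\oF},\obQ_p(j))$. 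Theorem \ref{main theorem intro} places the semi-simplification of $H^i_{\et}(X_{\oF},\bQ_p)$ in $\cC_F$, and since $\cC_F$ is closed under tensor products by Proposition \ref{tensor: tensor product} and contains $\obQ_p(-1)$ by Corollary \ref{tensor: cyclotomic character} as well as $\obQ_p(1)$ by Lemma \ref{tensor: any character} (or Proposition \ref{tensor: duality}), it is stable under arbitrary Tate twists. Hence the semi-simplification of $H^i_{\et}(X_{\oF},\obQ_p(j))$ lies in $\cC_F$, and being an irreducible subquotient of a representation whose semi-simplification is in $\cC_F$, $V$ is a direct summand of that semi-simplification, so $V \in \cC_F$, as required.

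The only nontrivial step is appealing to \cite[Corollary 8.6]{petrov} in the tangential base point setting; this is a routine extension already used implicitly in Section \ref{section duals}, and no new argument is needed. A minor book-keeping issue is passing between $\bQ_p$-coefficients (used in Theorem \ref{main theorem intro} and in the definition of $\cC_F$) and $\obQ_p$-coefficients (used in the two conjectures), which is handled by the base change statement of Lemma \ref{pro-alg: functions description}(i).
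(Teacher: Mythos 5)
Your proposal is correct and follows essentially the same route as the paper: the direction from the two conjectures to Fontaine--Mazur is the formal composition, Conjecture \ref{conj geom in pi1} follows from Fontaine--Mazur via \cite[Corollary 8.6]{petrov}, and Conjecture \ref{conj de rham in pi1} follows from Theorem \ref{main theorem intro} together with the stability of $\cC_F$ under Tate twists (the paper cites Lemma \ref{tensor: any character} for this). Your extra remarks on the tangential-base-point extension of \cite[Corollary 8.6]{petrov} and on the $\bQ_p$ versus $\obQ_p$ coefficient bookkeeping are points the paper leaves implicit, and both are handled correctly.
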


\begin{proof}
Assume that the Fontaine-Mazur conjecture is true.  Conjecture \ref{conj geom in pi1} is implied by the Fontaine-Mazur conjecture because, by \cite[Corollary 8.6]{petrov}, any subquotient of $\obQ_p[\pi_1^{\pro-\alg}(\PoF,0_v)]^{G_F-\fin}$ is geometric in the sense of \cite{fontaine-mazur}. Conjecture \ref{conj de rham in pi1} similarly follows from Theorem \ref{main theorem intro} and Lemma \ref{tensor: any character}, because all the representations in question arise as subquotients of some $H^i_{\et}(X_{\oF},\bQ_p(j))$.

Conversely, an irreducible geometric representation is a subquotient of $\obQ_p[\pi_1^{\pro-\alg}(\PoF,0_v)]^{G_F-\fin}$ for some $0_v$ by Conjecture \ref{conj de rham in pi1}, hence comes from geometry by Conjecture \ref{conj geom in pi1}.
\end{proof}

\subsection{Pro-reductive completion} 
As mentioned in the introduction, our proof of Theorem \ref{main theorem intro} has the disadvantage of appealing to non-semi-simple representations of $\pi_1^{\et}(\PoF,0_v)$. In this section, we discuss partial results on Galois representations appearing inside the space of functions on the pro-reductive completions of fundamental groups. Define the subclass $\cC_F^{\red}\subset\cC_F$ as

\begin{equation}
\cC^{\red}_F:=\{V\mid V\text{ appears as a subquotient of }\bQ_p[\pi_1^{\pro-\red}(\PoF,0_v)]\text{ for every }v\}
\end{equation}

This class shares some of the properties of $\cC_F$:

\begin{pr}\label{cred properties}
\begin{enumerate}[(i)]
\item All representations with finite image belong to $\cC_F^{\red}$

\item If $V_1,V_2\in\cC_F^{\red}$ then $V_1\oplus V_2,V_1\otimes V_2\in\cC_F^{\red}$.

\item If, for a finite extension $F'\supset F$ the restriction $V|_{G_{F'}}$ of a representation $V$ lies in $\cC_{F'}^{\red}$ then $V\in\cC_F$.
\end{enumerate}
\end{pr}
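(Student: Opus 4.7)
The plan is to adapt the proofs of Lemma \ref{artin motives}, Proposition \ref{tensor: tensor product}, and Corollary \ref{artin: finite extension} to the pro-reductive setting. By Lemma \ref{pro-alg: matrix coefficients}(ii), membership in $\cC_F^{\red}$ is equivalent to being realized as a subquotient of a matrix coefficient space $\cF(W)^{\vee}$ for some \emph{semisimple} representation $W$ of $\pi_1^{\et}(\PoF,0_v)$, so the task throughout is to track semisimplicity of the auxiliary representations built during the constructions of Sections 3--4. The essential inputs from characteristic-$0$ representation theory are: (a) external tensor products of semisimple representations of two groups remain semisimple as representations of the product; (b) continuous representations with finite image are semisimple; and (c) a finite-dimensional representation of a group is semisimple if and only if its restriction to any finite-index subgroup is semisimple.

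The first step is to establish a pro-reductive analog of Lemma \ref{pro-alg: finite index surjection}: for an open finite-index subgroup $\Gamma \subset \Gamma'$, the restriction $E[(\Gamma')^{\pro-\red}_E] \to E[\Gamma^{\pro-\red}_E]$ is surjective. Given a semisimple $\Gamma$-representation $W$, the induced representation $\Ind_\Gamma^{\Gamma'} W$ is semisimple as a $\Gamma'$-representation: restricting it to the normal core $N = \bigcap_{g \in \Gamma'} g\Gamma g^{-1}$ exhibits it as a direct sum of $N$-conjugates of $W|_N$ (each semisimple by (c)), hence semisimple over $N$, and then semisimple over $\Gamma'$ by another application of (c). The matrix coefficients of $\Ind_\Gamma^{\Gamma'} W$ restrict to matrix coefficients of $W$ on $\Gamma$, giving the required surjection.

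To prove (ii), the direct-sum case reduces as in Proposition \ref{tensor: tensor product} to verifying $\bQ_p^2 \in \cC_F^{\red}$, and the $G_F$-equivariant finite quotient $\pi_1^{\et}(\PoF,0_v) \twoheadrightarrow \pi_1^{\et}(\bP^1_F\setminus\{0,\infty\},0_v) \twoheadrightarrow \bZ/2$ yields an embedding of the semisimple regular representation of $\bZ/2$ into the pro-reductive function ring. For tensor products, one represents $V_i$ as a subquotient of $\cF(W_i)^{\vee}$ for semisimple $W_i$; by (a) the external tensor $W_1 \boxtimes W_2$ is a semisimple representation of $\pi_1^{\et}(\PoF,0_{v_1}) \times \pi_1^{\et}(\PoF,0_{v_2})$; pulling back along the surjection of Lemma \ref{tensor: telescopic subgroup} gives a semisimple representation of $\Gamma$ whose matrix coefficients contain $V_1 \otimes V_2$, and the preliminary pro-reductive surjection then lifts this into $\bQ_p[\pi_1^{\pro-\red}(\PoF,0_v)]$.

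Part (i) follows the proof of Lemma \ref{artin motives}: that construction produces matrix coefficients of a finite-image representation of $\pi_1^{\et}(\PoF,0_v)$, which is semisimple by (b), so these coefficients already lie in $\bQ_p[\pi_1^{\pro-\red}(\PoF,0_v)]$; the concluding tensor-product iteration uses (ii). For part (iii), mimic Corollary \ref{artin: finite extension}: let $W \subset \bQ_p[\pi_1^{\pro-\red}(\PoF,0_{v_1})]$ be a finite-dimensional $G_{F'}$-stable witness for $V|_{G_{F'}}$, and let $W'$ be its $G_F$-span; then $V$ is a quotient of $\Ind_{G_{F'}}^{G_F} W$, which embeds in $W' \otimes \bQ_p[G_F/G_{F'}]$. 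By (i) applied to the finite-image representation $\bQ_p[G_F/G_{F'}]$ and tensor-product closure from (ii), this lies in $\cC_F^{\red}$, and hence $V \in \cC_F^{\red} \subset \cC_F$. The main obstacle is the preliminary pro-reductive surjection, which depends on fact (c) and on the semisimple decomposition of $\Res_N^{\Gamma'} \Ind_\Gamma^{\Gamma'} W$.
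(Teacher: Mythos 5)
Your proposal is correct and follows the same route as the paper, which simply asserts that the proofs of Lemma \ref{pro-alg: finite index surjection}, Lemma \ref{artin motives}, Proposition \ref{tensor: tensor product} and Corollary \ref{artin: finite extension} go through verbatim for the pro-reductive completion. What you add is the bookkeeping the paper leaves implicit --- in particular the pro-reductive analogue of Lemma \ref{pro-alg: finite index surjection} via semisimplicity of $\Ind_{\Gamma}^{\Gamma'}W$ in characteristic zero, and the semisimplicity of external tensor products --- and these verifications are accurate.
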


\begin{proof}
The proofs of Lemma \ref{pro-alg: finite index surjection}, Proposition \ref{artin motives}, Proposition \ref{tensor: tensor product} and Corollary \ref{artin: finite extension} go through verbatim with the pro-reductive completion in place of the pro-algebraic completion.
\end{proof}

Notably, the analog of Proposition \ref{H1 main} does not hold for the pro-reductive completion already in the case of the trivial local system $\bL=\underline{\bQ_p}$, as Corollary \ref{weil numbers over number field} shows. We can also describe the class $\cC_{F}^{\red}$ more explicitly using the following

\begin{lm}\label{reductive explicit}
Let $X$ be a geometrically connected scheme over $F$ equipped with a base point $x$. If a finite-dimensional $\obQ_p$-representation $V$ of $G_F$ can be embedded into $\obQ_p[\pi_1^{\pro-\red}(X_{\oF},x)]$ then, for some finite extension $F'\supset F$, the restriction $V|_{G_{F'}}$ is isomorphic to a direct sum of subrepresentations of representations of the form $\bL_x\otimes\bL_x^{\vee}$ where $\bL$ is a geometrically irreducible $\obQ_p$-local system on $X_{F'}$.
\end{lm}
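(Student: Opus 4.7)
The plan is to locate the embedded $V$ inside a matrix-coefficient subspace $\cF(\bL|_{X_{\oF}})^{\vee}$ attached to some arithmetic local system $\bL$ on $X$ with semi-simple geometric monodromy, and then decompose that subspace via Jacobson density after a finite base change $F'/F$.

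First, since $V$ is finite-dimensional its image lies in the locally $G_F$-finite part $\bQ_p[\pi_1^{\pro-\red}(X_{\oF},x)]^{G_F-\fin}$. A direct pro-reductive analog of Lemma \ref{pro-alg: functions as matrix coeffs} (obtained by combining Lemma \ref{pro-alg: matrix coefficients}(ii) with Lemma \ref{pro-alg: galois finite}) expresses this subspace as the union of $\cF(\bL|_{X_{\oF}})^{\vee}$ as $\bL$ ranges over arithmetic $\bQ_p$-local systems on $X$ whose restriction to $X_{\oF}$ has semi-simple monodromy; fix such a $\bL$ with $V \hookrightarrow \cF(\bL|_{X_{\oF}})^{\vee}$.

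Next, enlarge $F$ to a finite extension $F'$ chosen so that (i) every isomorphism class of absolutely irreducible geometric constituent of $\bL|_{X_{\oF}}$ is $G_{F'}$-stable --- possible because $G_F$ acts on this finite set of classes through a finite quotient --- and (ii) each such constituent underlies an absolutely irreducible local system $\bL'_i$ on $X_{F'}$, which one arranges by a further finite base change that disposes of the Schur cocycle obstruction in $H^2(G_{F'}, E^{\times})$ (for $E$ the coefficient field). After this, $\bL|_{X_{F'}}$ decomposes $G_{F'}$-equivariantly as $\bigoplus_i \bL'_i \otimes \underline{W_i}$, where the $\bL'_i$ are pairwise non-isomorphic geometrically absolutely irreducible local systems on $X_{F'}$ and the $W_i$ are the multiplicity $G_{F'}$-representations. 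Under the decomposition $\bL_x = \bigoplus_i \bL'_{i,x} \otimes W_i$, an element $g \in \pi_1^{\et}(X_{\oF},x)$ acts on $\bL_x$ as $\bigoplus_i \rho_{\bL'_i}(g) \otimes \Id_{W_i}$; by Jacobson density applied to the pairwise non-isomorphic absolutely irreducible representations $\bL'_{i,x}$, the $\bQ_p$-span of the image of $\pi_1^{\et}(X_{\oF},x)$ in $\End(\bL_x)$ equals $\bigoplus_i \End(\bL'_{i,x}) \otimes \Id_{W_i}$. Since $\Id_{W_i}$ is a $G_{F'}$-invariant element of $\End(W_i)$, this subspace is isomorphic to $\bigoplus_i \End(\bL'_{i,x})$ as a $G_{F'}$-representation; dualizing via the trace pairing gives $\cF(\bL|_{X_{\oF}})^{\vee} \simeq \bigoplus_i \bL'_{i,x} \otimes (\bL'_{i,x})^{\vee}$, a direct sum of $G_{F'}$-representations of the form asserted by the lemma, into which $V|_{G_{F'}}$ embeds.

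The main obstacle I anticipate is the descent step (ii): ensuring that each absolutely irreducible geometric constituent of $\bL|_{X_{\oF}}$ actually descends to an arithmetic local system on $X_{F'}$, rather than only to some intermediate geometric extension, requires vanishing of a Schur/Brauer-type cocycle obstruction, which one must arrange to kill by a further finite base change. Once that descent is in place, the remainder of the proof reduces to the Jacobson density computation above together with the observation that the multiplicity spaces $W_i$ contribute only through the $G_{F'}$-invariant element $\Id_{W_i}$.
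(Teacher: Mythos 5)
Your proposal is correct and follows essentially the same route as the paper: reduce to matrix coefficients $\cF(\bL)$ of a geometrically semi-simple arithmetic local system, pass to a finite extension so that $G_{F'}$ fixes the isomorphism classes of the geometric irreducible constituents, descend each constituent to an arithmetic local system on $X_{F'}$ (the paper kills the same Schur/Brauer obstruction either by Tate's vanishing theorem or, as you do, by a further finite base change), and then observe that the multiplicity spaces $W_i$ contribute only through $\Id_{W_i}$, so that $\cF(\bL)$ reduces to $\bigoplus_i\End(\bL'_{i,x})$. Your explicit Jacobson--Burnside density computation is just a slightly more spelled-out version of the paper's observation that $\cF(W\otimes\bM)=\cF(\bM)$.
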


\begin{proof}
We need to prove that if $\bL$ is any geometrically semi-simple local system then the representation $\cF(\bL)$ has the aforementioned form.

Let $\bL|_{X_{\oF}}=\bigoplus\limits_{i\in I}\bM_i$ be the decomposition into irreducible summands. The Galois group $G_F$ then acts continuously on the set of isomorphism classes of $\bM_i$s, so, after replacing $F$ by a finite extension, we may assume that this action is trivial. That is, for each $\sigma\in G_F$ the twist $\bM_i^{\sigma}$ is isomorphic to $\bM_i$.

This implies that each $\bM_i$ extends to a projective representation of $\pi_1^{\et}(X_{F'},x)$ and, by Tate's theorem \cite[Theorem 4]{serre-modforms} (or, alternatively automatically by passing to a finite extension of $F$) each $\bM_i$ in fact extends to a local system $\widetilde{\bM_i}$. We can then consider the caonical map $\Hom_{X_{\oF}}(\widetilde{\bM_i}|_{X_{\oF}},\bL|_{X_{\oF}})\otimes_{}\widetilde{\bM_i}\to\bL$ where $W_i:=\Hom_{X_{\oF}}(\bM_i|_{X_{\oF}},\bL|_{X_{\oF}})=H^0(X_{\oF},(\widetilde{\bM_i}^{\vee}\otimes\bL)|_{X_{\oF}})$ is viewed as a representation of $G_F$. 

Since each $\bM_i$ is irreducible, these maps induce an isomorphism $\bigoplus\limits_{i\in J}W_i\otimes\widetilde{\bM_i}\simeq \bL$ for an appropriate subset $J\subset I$. Since $\cF(\bL_1\oplus\bL_2)$ is a direct summand of $\cF(\bL_1)\oplus\cF(\bL_2)$ for any local systems $\bL_1,\bL_2$ on $X$, we may therefore assume that $\bL=W\otimes\bM$ for some geometrically irreducible $\bM$ on $X$. This finishes the proof  because $\cF(W\otimes\bM)=\cF(\bM)$, and $\cF(\bM)=\bM_x\otimes\bM^{\vee}_x$.
\end{proof}

In the spirit of Theorem \ref{main theorem intro}, geometrically irreducible local systems on any variety give rise to representations in $\cC_F^{\red}$:

\begin{pr}\label{irreducible monodromy}
Let $\bL$ be a geometrically irreducible $\obQ_p$-local system (resp. geometrically absolutely irreducible $\bQ_p$-local system) on a variety $S$ over $F$, equipped with a base point $s\in S(F)$. Then the Galois representation $\bL_s\otimes \bL_s^{\vee}$ is a subquotient of $\obQ_p[\pi_1^{\pro-\red}(\PoF,0_v)]$ (resp. $\bQ_p[\pi_1^{\pro-\red}(\PoF,0_v)]$) for every tangential base point $0_v$. 
\end{pr}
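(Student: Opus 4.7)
The approach parallels Proposition \ref{belyi: any pi1}, enhanced by the observation from Example \ref{intro: irreducible monodromy example} that for a geometrically absolutely irreducible local system $\bL$, Burnside's theorem identifies $\cF(\bL)^{\vee}$ with $\bL_s \otimes \bL_s^{\vee}$. The additional ingredient needed here is to control that the local system we construct on $\PF$ is geometrically semi-simple, so that its space of matrix coefficients lands in $\obQ_p[\pi_1^{\pro-\red}(\PoF, 0_v)]$ rather than just in the pro-algebraic completion's ring of functions.

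First, I would repeat verbatim the reductions from the beginning of the proof of Proposition \ref{belyi: any pi1}, noting that pulling $\bL$ back along a map of pointed schemes inducing a surjection on geometric $\pi_1^{\et}$ preserves both geometric irreducibility and the Galois representation $\bL_s \otimes \bL_s^{\vee}$ (since the pulled-back local system has stalk canonically isomorphic to $\bL_s$ at the preimage point). This reduces to the case where $S$ is a smooth connected curve. Then, by Belyi's theorem (Theorem \ref{belyi main}) applied to the smooth compactification of $S$ with the point set including $s$ and the points at infinity, and post-composing with a Möbius transformation, there is a finite morphism $f: \bar S \to \bP^1_F$ etale over $\PF$ with $f(s) = 0$ and $f(\bar S \setminus S) \subset \{0,1,\infty\}$; set $U := f^{-1}(\PF) \subset S$.

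Consider $\bL' := f_*(\bL|_U)$, a local system on $\PF$. Its geometric monodromy is the induced representation $\Ind_H^G \rho$, where $H := f_*\pi_1^{\et}(U_{\oF}, x_w) \subset G := \pi_1^{\et}(\PoF, 0_v)$ (for an $\oF$-tangential base point $x_w$ above $0_v$) and $\rho$ is the geometric monodromy of $\bL|_U$. Since $\pi_1^{\et}(U_{\oF}) \twoheadrightarrow \pi_1^{\et}(S_{\oF})$ (as $U \subset S$ is dense open in a smooth curve), $\rho$ is absolutely irreducible. The key new step is to observe that in characteristic zero, induction from a finite-index subgroup preserves semi-simplicity: given a $G$-subrepresentation $V' \subset \Ind_H^G W$ with $W$ semi-simple as an $H$-representation, one averages an $H$-equivariant projection onto $V'$ over the coset representatives of $G/H$ to obtain a $G$-equivariant projection. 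Hence $\bL'$ is geometrically semi-simple, and by Lemma \ref{pro-alg: matrix coefficients}(ii) combined with Lemma \ref{pro-alg: functions as matrix coeffs}, $\cF(\bL')^{\vee} \subset \obQ_p[\pi_1^{\pro-\red}(\PoF, 0_v)]^{G_F-\fin}$.

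Finally, the subquotient argument from the last paragraph of Proposition \ref{belyi: any pi1} applies unchanged: the stalk $\bL_s$ embeds canonically into $\bL'_{0_v}$, is stable under the $H$-action, and the restricted $H$-action on $\bL_s$ factors through the monodromy $\pi_1^{\et}(S_{\oF}, s) \to GL(\bL_s)$, so $\cF(\bL)$ (defined at $s$) is a subquotient of $\cF(\bL')$ (defined at $0_v$), compatibly with the $G_F$-action. Since $\bL$ is geometrically absolutely irreducible, Burnside's theorem yields $\cF(\bL) = \End_{\obQ_p}(\bL_s)$ and hence $\cF(\bL)^{\vee} \cong \bL_s \otimes \bL_s^{\vee}$, completing the proof. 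The principal potentially delicate point is the semi-simplicity of the pushforward $\bL'$, which is the only step that genuinely requires the characteristic-zero hypothesis; a secondary technicality, handled exactly as in Proposition \ref{belyi: any pi1}, is the correct choice of tangential base points over $\oF$ when $f$ is ramified at $s$.
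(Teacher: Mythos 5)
Your proof is correct and follows essentially the same route as the paper, which simply cites Example \ref{intro: irreducible monodromy example} (Burnside giving $\cF(\bL)^{\vee}\cong\bL_s\otimes\bL_s^{\vee}$) and then invokes Proposition \ref{belyi: any pi1} ``reproven with pro-reductive completions in place of pro-algebraic completions.'' The one point you make explicit that the paper leaves implicit --- that the pushforward $\bL'=f_*(\bL|_U)$ remains geometrically semi-simple because induction from a finite-index subgroup preserves semi-simplicity in characteristic zero --- is exactly the content of that ``reproof,'' so you have correctly identified and justified the only step requiring care.
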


\begin{proof}
Applying the discussion of the Example \ref{intro: irreducible monodromy example} in the introduction, we see that $\bL_s\otimes \bL_s^{\vee}$ is a subrepresentation of $\obQ_p[\pi_1^{\pro-\red}(S_{\oF},s)]$. Proposition \ref{belyi: any pi1}, reproven with pro-reductive completions in place of pro-algebraic completions then implies the claimed result.
\end{proof}

\begin{cor}
If $V=H^1_{\et}(A_{\oF},\obQ_p)$ for an abelian variety $A$ over $F$ or $V=H^2_{\et}(X_{\oF},\obQ_p)$ for a K3 surface $X$ then $V\otimes V^{\vee}\in \cC_F^{\red}$.
\end{cor}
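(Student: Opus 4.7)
The plan is to combine Proposition \ref{irreducible monodromy} with classical big-monodromy results for universal families, plus the Kuga--Satake construction to handle the K3 case. Whenever I need to produce an $F$-rational point on a moduli space I pass to a finite extension and descend at the end via Proposition \ref{cred properties}(iii).

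For the abelian-variety case, after a finite extension $F'/F$ we may assume $A$ admits a principal polarization and level-$n$ structure with $n \geq 3$ defined over $F'$, which gives an $F'$-rational point $s$ of the fine moduli scheme of principally polarized abelian $g$-folds with level-$n$ structure, where $g = \dim A$. The universal family yields a local system $\bL := R^1\pi_*\obQ_p$ whose stalk at $s$ recovers $V|_{G_{F'}}$ and whose geometric monodromy is Zariski-dense in $\Sp_{2g}$ (a classical computation), so $\bL$ is geometrically absolutely irreducible and Proposition \ref{irreducible monodromy} directly gives $V|_{G_{F'}} \otimes V^\vee|_{G_{F'}} \in \cC_{F'}^{\red}$. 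Descending via Proposition \ref{cred properties}(iii) yields $V \otimes V^\vee \in \cC_F^{\red}$.

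The K3 case is more delicate because $V = H^2_{\et}(X_{\oF}, \obQ_p)$ is never geometrically irreducible. After a finite extension we may assume $X$ carries an $F$-rational polarization $H$ of some degree $d$; its cycle class spans a $G_F$-stable copy of $\obQ_p(-1)$ inside $V$, and the $G_F$-equivariance of the Poincar\'e pairing $V \otimes V \to \obQ_p(-2)$ gives an orthogonal $G_F$-equivariant decomposition $V = \obQ_p(-1) \oplus V_{\prim}$ with $V_{\prim}^\vee \cong V_{\prim}(2)$. A direct calculation then gives
\[
V \otimes V^\vee \;\cong\; \obQ_p \;\oplus\; V_{\prim}(1)^{\oplus 2} \;\oplus\; V_{\prim}^{\otimes 2}(2),
\]
so by Proposition \ref{cred properties}(i)--(ii) it suffices to place $V_{\prim}(1)$ and $V_{\prim}^{\otimes 2}(2)$ in $\cC_F^{\red}$. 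For $V_{\prim}^{\otimes 2}(2) = V_{\prim} \otimes V_{\prim}^\vee$ I would apply Proposition \ref{irreducible monodromy} to the primitive cohomology local system on a moduli space of $d$-polarized K3 surfaces; its geometric monodromy is Zariski-dense in the orthogonal group of $V_{\prim}$ (a classical big-monodromy result), so the local system is geometrically absolutely irreducible.

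The main obstacle is the Tate-twisted summand $V_{\prim}(1)$: Proposition \ref{irreducible monodromy} applied to $V_{\prim}$ alone only produces the weight-zero combination $V_{\prim} \otimes V_{\prim}^\vee$ and cannot reach $V_{\prim}(1)$. To handle this I would invoke the Kuga--Satake construction: after a further finite extension of $F$, there is an abelian variety $B$ over $F$ together with a $G_F$-equivariant embedding
\[
V_{\prim}(1) \;\hookrightarrow\; \End\!\bigl(H^1_{\et}(B_{\oF}, \obQ_p)\bigr) \;=\; H^1_{\et}(B_{\oF}, \obQ_p) \otimes H^1_{\et}(B_{\oF}, \obQ_p)^\vee.
\]
The right-hand side lies in $\cC_F^{\red}$ by the abelian-variety case applied to $B$, so $V_{\prim}(1) \in \cC_F^{\red}$ and the K3 case is finished by combining the three summands.
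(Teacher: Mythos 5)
Your abelian-variety argument is exactly the paper's: pass to a fine moduli scheme of principally polarized abelian varieties with level structure to get an $F'$-rational point, invoke big monodromy of the universal family (the paper cites Deligne, Hodge II, Lemme 4.4.16) to get geometric absolute irreducibility of $R^1\pi_*\bQ_p$, apply Proposition \ref{irreducible monodromy}, and descend via Proposition \ref{cred properties}(iii). Where you genuinely diverge is the K3 case. The paper disposes of it in one sentence (``dealt with in the same way,'' citing irreducibility of the monodromy on the moduli space), which, read literally, applies Proposition \ref{irreducible monodromy} only to the geometrically irreducible \emph{primitive} local system and hence produces only $\obQ_p\oplus V_{\mathrm{prim}}\otimes V_{\mathrm{prim}}^{\vee}$. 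You are right that the remaining summands $\Hom(\obQ_p(-1),V_{\mathrm{prim}})\oplus\Hom(V_{\mathrm{prim}},\obQ_p(-1))\cong V_{\mathrm{prim}}(1)^{\oplus 2}$ of $V\otimes V^{\vee}$ are not matrix coefficients of that (or of any single geometrically irreducible) local system on the moduli space, so they require a separate argument. Your fix --- the Kuga--Satake embedding $V_{\mathrm{prim}}(1)\hookrightarrow\End\bigl(H^1_{\et}(B_{\oF},\obQ_p)\bigr)$ over a finite extension, followed by the abelian-variety case for $B$, closure of $\cC_{F'}^{\red}$ under subquotients, and descent via Proposition \ref{cred properties}(iii) --- is correct and is a genuine extra ingredient not present in the paper; it buys an actual proof of the stated corollary for the full $H^2$ rather than only for its weight-zero endomorphism part.

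Two small points, both shared with or glossed over by the paper. First, not every abelian variety acquires a principal polarization over a finite extension; either restrict to the principally polarizable case or use Zarhin's trick ($(A\times A^{\vee})^4$ is principally polarizable, and $V\otimes V^{\vee}$ is a direct summand of the corresponding endomorphism representation, so membership in $\cC_F^{\red}$ passes down). Second, in the K3 case you should add a level structure to the moduli space of degree-$2d$ primitively polarized K3 surfaces, exactly as in the abelian case, so that it is a smooth scheme and your polarized $X$ gives an honest $F'$-point to serve as the base point required by Proposition \ref{irreducible monodromy}.
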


\begin{proof}
Denoting $g=\dim A$ let $S=\cA_{g,\Gamma(3)}$ be the moduli space of principally polarized abelian varieties with full level $3$ structure (the level structure is introduced just to ensure that $\cA_{g,\Gamma(3)}$ is representable by a smooth variety). It is equipped with the universal family $\pi:\cA^{\mathrm{univ}}\to S$. Choosing a basis in $A[3](\oF)$ we get a point $x\in S(F')$  corresponding to $A$ defined over a finite extension $F'\supset F$. The assumption of Proposition \ref{irreducible monodromy} is satisfied for $\bL=R^1\pi_*\bQ_p$ (see e.g. \cite[Lemme 4.4.16]{deligne-hodge2}), so $(V\otimes V^{\vee})|_{G_{F'}}$ is in $\cC_{F'}^{\red}$ and the claim follows by Proposition \ref{cred properties} (iii).

The case of the cohomology of a K3 surface is dealt with in the same way using that the corresponding geometric monodromy representation of the fundamental group of the moduli space is irreducible, cf. \cite[Corollary 6.4.7]{huybrechts}.
\end{proof}

\subsection{Base points}\label{base points subsection} Among the results on the representations appearing in $\bQ_p[\pi_1^{\pro-\alg}(\PoF,0_v)]$ that we have discussed so far, the only one that is genuinely special to tangential base points is Proposition \ref{tensor: duality}, as Corollary \ref{weil numbers over number field} shows. I hope that the proof of Theorem \ref{main theorem intro} can be rectified to show that the semi-simplification of any representation of the form $H^i_{\et}(Y_{\oF},\bQ_p)$, for a variety $Y$ over $F$, is a subquotient of $\bQ_p[\pi_1^{\pro-\alg}(\PoF,x)]^{G_F-\fin}$
for every base point $x$. However, at present, the usage of tangential base points appears to be necessary in the proofs of Proposition \ref{belyi: any pi1} and Proposition \ref{tensor: tensor product}. These difficulties would be remedied if one could answer affirmatively the following general question about Belyi maps.

\begin{question}
Given two points $x, y\in\bP^1(F)\setminus\{0,1,\infty\}$, is it possible to find a finite map $f:\bP_F^1\to\bP_F^1$ that is \'etale above $\PF$ such that $f(0)=0,f(1)=1,f(\infty)=\infty, f(x)=y$?
\end{question}
\section{Tangential base points}\label{tangential base points}

In this section, we recall the notion of a tangential base point at infinity due to \cite[\S 15]{delignetrois} and collect relevant basic facts about it. Let $C$ be a smooth curve over an arbitrary field $F$ of characteristic zero and denote by $\oC$ its smooth proper compactification.

Given a point $x\in (\oC\setminus C)(F)$ and a non-zero tangent vector $v\in T_{\oC,x}$, we may choose a generator $t$ of the maximal ideal $\fm_x\subset\cO_{\oC,x}$ such that the image of $t$ in $\fm_x/\fm_x^2\simeq T_{\oC,v}^{\vee}$ is equal to $1$ when paired with $v$. We will call such $t$ {\it compatible} with the tangent vector $v$. This property defines $t$ uniquely up to multiplication by an element in $1+\fm_x$. The choice of $t$ defines a morphism $\iota:\Spec F((t))\to C$ inducing an isomorphism $\widehat{\cO_{\oC,x}}[1/t]\simeq F((t))$. There is also an embedding $\iota_0:\Spec F((t))\to \Spec F[t,t^{-1}]=\bG_{m,F}$ which is fixed once and for all.  

The {\it tangential base point} $x_v$ associated to $x$ and $v$ is a functor from the category of finite \'etale covers of $C$ to the category of finite \'etale covers of $\Spec F$ defined as the composition

\begin{equation}
\begin{tikzcd}
\FEt(C)\arrow[r, "\iota^*"] & \FEt(\Spec F((t)))\arrow[d, "\sim"] \\
\FEt(\Spec F) & \FEt(\bG_{m,F})\arrow[l, "t=1"]
\end{tikzcd}
\end{equation}

Here the vertical functor is inverse to the restriction along $\iota_0$. The resulting functor does not depend, up to an isomorphism, on the choice of $t$ by \cite[Lemme 15.25]{delignetrois}. If we further choose an algebraic closure $F\subset \oF$ we may define the fundamental groups of $X_{\oF}$ and $X$ with respect to the base point $x_v$, which we denote by $\pi_1^{\et}(X_{\oF},x_v)$ and $\pi_1^{\et}(X,x_v)$, respectively. The latter group can be described as the usual semi-direct product: $\pi_1^{\et}(X,x_v)=G_F\ltimes \pi_1^{\et}(X_{\oF},x_v)$. Fundamental groups defined using tangential base points interact with those defined with respect to classical points as follows:

\begin{lm}\label{tangential: vs classical}
\begin{enumerate}[(i)]
\item Given a point $x\in C(F)$ and a tangent vector $v\in T_xC$, for a finite \'etale cover $U\to C$ the geometric fibers of $U$ at $x$ and of $U|_{C\setminus x}$ at $x_v$ are canonically identified. In particular, there is a natural surjective homomorphism $\pi_1^{\et}(C\setminus x, x_v)\to \pi_1^{\et}(C,x)$.

\item Suppose that $f:D\to C$ is a finite surjective, possibly ramified, morphism between smooth curves. Given a point $x\in D(F)$ and a tangent vector $v\in T_xD$, there exists a tangent vector $w\in T_{f(x)}C$ such that pullback of \'etale covers along $f$ induces a morphism $\pi_1^{\et}(D\setminus f^{-1}(f(x)),x_v)\to \pi_1^{\et}(C\setminus f(x), f(x)_w)$ that is an isomorphism onto an open subgroup.

\item In the situation of (ii), given a tangent vector $w\in T_{f(x)}C$ there exists a tangential base point $x_v$ defined over a finite Kummer extension of $F$ such that $f(x_v)=f(x)_w$.
\end{enumerate} 
\end{lm}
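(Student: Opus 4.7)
All three parts come down to computing $f$ (or, in (i), the identity inclusion) on formal disks using uniformizers compatible with the given tangent vectors, and then matching the resulting fiber functors that define the tangential base points. A recurring input is that, since $\Char F=0$, every element of $1+tF[[t]]$ admits unique $n$-th roots in $1+tF[[t]]$.

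For (i), let $t$ be a uniformizer at $x$ compatible with $v$ and let $U\to C$ be a finite \'etale cover. Since $F[[t]]$ is Henselian, $U\times_C\Spec F[[t]]$ is canonically isomorphic, as a finite \'etale cover of $F[[t]]$, to $U_x\times_F\Spec F[[t]]$. Hence the composite functor defining the $x_v$-fiber---restriction to $\Spec F((t))$, unique extension to $\bG_{m,F}$ along $\iota_0^{-1}$, and taking the fiber at $t=1$---returns $U_x$. This identifies the $x_v$-fiber functor on $\FEt(C)\subset\FEt(C\setminus x)$ with the $x$-fiber functor. Since the inclusion $\FEt(C)\hookrightarrow\FEt(C\setminus x)$ is fully faithful (morphisms of finite \'etale $C$-schemes agreeing on the dense open $C\setminus x$ extend, by normality of $C$), the standard formalism of Galois categories gives the desired surjection of fundamental groups.

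For (ii), let $y=f(x)$ and let $e$ be the ramification index of $f$ at $x$. Pick any uniformizer $s$ at $y$; then $f^*(s)=u(t)t^e$ with $c:=u(0)\in F^\times$. Setting $s':=s/c$ gives $f^*(s')=h(t)t^e$ with $h(t)\in 1+tF[[t]]$, and let $w\in T_yC$ be the tangent vector compatible with $s'$. The unique $e$-th root $h^{1/e}\in 1+tF[[t]]$ produces another uniformizer $\tau:=t\cdot h(t)^{1/e}$ at $x$ satisfying $\tau\equiv t\bmod t^2$; by \cite[Lemme 15.25]{delignetrois} this $\tau$ is still compatible with $v$ and defines the same tangential base point. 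Crucially, $\tau^e=f^*(s')$, so in the coordinates $\tau$ and $s'$ the restriction of $f$ to the formal disk is literally the algebraic $e$-th power map $\bG_{m,F}\to\bG_{m,F}$. Consequently, for any finite \'etale cover $V\to C\setminus y$ with unique extension $\tilde V\to\bG_{m,F}$ in the coordinate $s'$, the pullback $f^*V$ restricted to $\Spec F((\tau))$ extends to $\bG_{m,F}$ as the pullback of $\tilde V$ along $\tau\mapsto\tau^e$. Taking fibers at $\tau=1$ and $s'=1$ respectively identifies $(f^*V)_{x_v}\cong V_{y_w}$, yielding the asserted morphism on fundamental groups; it is an isomorphism onto an open subgroup because $f\colon D\setminus f^{-1}(y)\to C\setminus y$ is a finite \'etale cover. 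For (iii), given $w$ with compatible uniformizer $s'$, pick any uniformizer $t_0$ at $x$ and write $f^*(s')=u(t_0)t_0^e$. Let $c:=u(0)$ and $F':=F(c^{1/e})$; Hensel's lemma produces a unique $(u(t_0)/c)^{1/e}\in 1+t_0F'[[t_0]]$, and $\tilde t:=c^{1/e}\cdot t_0\cdot(u(t_0)/c)^{1/e}$ is a uniformizer at $x_{F'}$ satisfying $\tilde t^e=f^*(s')$. The tangent vector $v\in T_{x_{F'}}D_{F'}$ compatible with $\tilde t$ reduces (ii) to the case $h=1$, so $f(x_v)=y_w$.

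\textbf{Main obstacle.} The principal subtlety is in part (ii): one must verify that the two definitions of the $x_v$-fiber functor, via $t$ versus via $\tau=t\cdot h(t)^{1/e}$, agree canonically. This is Deligne's independence of the fiber functor on the choice of compatible uniformizer \cite[Lemme 15.25]{delignetrois}. Once that is granted, the rest of (ii) is formal because $\tau^e=f^*(s')$ is a polynomial identity that lifts the formal map $f$ to a morphism $\bG_{m,F}\to\bG_{m,F}$ and reduces the fiber-functor comparison to ordinary algebraic base change.
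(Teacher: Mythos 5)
Your proof is correct and follows essentially the same route as the paper's: compute $f$ on formal disks in uniformizers compatible with the given tangent vectors and match the resulting fiber functors, with the $e$-th root of the unit part of $f^*(s)$ reducing everything to the algebraic $e$-th power map on $\bG_{m}$. You carry out the fiber-functor comparison more explicitly than the paper does, and your normalization in (iii) ($v$ compatible with $\tilde t$, i.e.\ $v=c^{-1/e}\,\partial/\partial t_0$) is the correct one --- the paper's formula $a_n^{1/n}\cdot\frac{\partial}{\partial t}$ has the exponent inverted, though this is harmless since the required Kummer extension $F(a_n^{1/n})$ is the same.
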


\begin{proof}
(i) This follows directly from the definition because a finite \'etale cover of $\Spec F((t))$ that extends to $\Spec F[[t]]$ is trivial, so the fibers of the corresponding cover of $\bG_{m,K}$ over $0$ and $1$ are canonically identified.

(ii) This is evident if $f$ is unramified at $x$. In general, $f$ induces some morphism $\widehat{\cO}_{C,f(x)}\to\widehat{\cO}_{D,x}$ between completed local rings. Choosing a local coordinate $t$ at $x$ compatible with $v$ and some local coordinate $s$ at $f(x)$ we can write this map as $F((s))\mapsto F((t))$ given by some $s\mapsto a_nt^n+a_{n+1}t^{n+1}+\dots$, with $a_n\neq 0$. The appropriate tangent vector $w$ is then given by $a_n\cdot \frac{\partial}{\partial s}$.

(iii) As in the proof of the previous part, there is an induced morphism $\widehat{\cO}_{C,f(x)}\to\widehat{\cO}_{D,x}$ but this time we choose a local coordinate $s$ for $D$ that is compatible with $w$. If the map between completed local rings is given by $F((s))\to F((t)), s\mapsto a_nt^n+a_{n+1}t^{n+1}+\dots$ with $a_n\neq 0$ then the desired tangent vector $v$ is defined as $a_n^{-1/n}\cdot\frac{\partial}{\partial t}$. 
\end{proof}

\bibliographystyle{alpha}
\bibliography{univ}

\end{document}